\newcommand{\ns}{{\mathbb N}} 
\newcommand{\zs}{{\mathbb Z}} 
\newcommand{\qs}{{\mathbb Q}}  
\newcommand{\al}{\alpha}
\newcommand{\si}{\sigma}
\newcommand{\la}{\lambda}
\newcommand{\eps}{\epsilon}
\newcommand{\vareps}{\varepsilon}
\newcommand{\bx}{\bar x}
\DeclareMathOperator{\id}{id}
\DeclareMathOperator{\SUM}{SUM}
\newcommand{\cE}{\mathcal E}
\newtheorem{Theorem}{Theorem}
\newtheorem{Proposition}[Theorem]{Proposition}
\newtheorem{Definition}[Theorem]{Definition}
\newtheorem{Lemma}[Theorem]{Lemma}
\newtheorem{Property}{Property}
\newcommand{\beq}{\begin{equation}}
\newcommand{\eeq}{\end{equation}}
\newcommand{\gf}{generating function}
\newcommand{\gfs}{generating functions}
\newcommand{\fps}{formal power series}
\def\emm#1,{{\em #1}}
\newcommand{\p}{permutation}
\newcommand{\ps}{permutations}
\newcommand{\inv}{involution}
\newcommand{\invs}{involutions}
\newcommand{\incm}{$12\cdots m(m+1)$}
\newcommand{\decm}{$ (m+1)m\cdots 21$}
\newcommand{\Sn}{{\mathfrak S}}
\newcommand{\In}{{\mathfrak I}}
\newcommand{\Fa}{\tilde F}
\newcommand{\Fc}{F}
\newcommand{\Fd}{\bar F}
\newcommand{\Fe}{F}
\newcommand{\Ga}{\tilde G}
\newcommand{\Gc}{G}
\newcommand{\Gd}{\bar G}
\newcommand{\Ge}{G}
\newcommand{\B}{B}
\def\section{\@startsection{section}{1}%
 \z@{.7\linespacing\@plus\linespacing}{.5\linespacing}%
 {\normalfont\bfseries\scshape\centering}}
\def\subsection{\@startsection{subsection}{2}%
  \z@{.5\linespacing\@plus\linespacing}{.5\linespacing}%
  {\normalfont\bfseries\scshape}}
\def\subsubsection{\@startsection{subsubsection}{3}%
 \z@{.5\linespacing\@plus\linespacing}{-.5em}
  {\normalfont\bfseries\itshape}}
\def\qed{$\hfill{\vrule height 3pt width 5pt depth 2pt}$}
\begin{document}
\title
[Permutations with no long monotone subsequence]
{
Counting permutations\\
 with no long  monotone subsequence\\
via generating trees and the kernel method}

\author[M. Bousquet-M\'elou]{Mireille Bousquet-M\'elou}
\address{M. Bousquet-M\'elou: CNRS, LaBRI, Universit\'e Bordeaux 1, 
351 cours de la Lib\'eration, 33405 Talence, France}
\email{mireille.bousquet@labri.fr}
%


\keywords{Permutations -- Ascending subsequences -- Generating
  functions -- Generating trees}
\subjclass[2000]
{Primary 05A05; Secondary 05E05}

\begin{abstract}
We recover Gessel's determinantal formula for the \gf\ of \ps\ with no
ascending subsequence of length $m+1$. The starting point of our proof is the
recursive construction of these \ps\ by insertion of the largest
entry. This construction is of course extremely simple. The cost of this
simplicity is that we need to take into
account in the enumeration $m-1$ additional parameters --- namely, the positions
of the leftmost increasing subsequences of length $i$, for $i=2,\ldots,m$. This yields for the \gf\ a functional equation with $m-1$
``catalytic'' variables, and the heart of the
paper is the solution of this equation.

We perform a similar task for involutions with no descending subsequence
of length $m+1$, constructed recursively by adding a cycle containing
the largest entry. We refine this result by keeping track of the
number of fixed points.

In passing, we prove that the ordinary \gfs\ of these families of
\ps\ can be expressed as constant terms of rational series.
\end{abstract}
\maketitle

\date{\today}


\section{Introduction}
Let $\tau=\tau(1)\cdots \tau(n)$ be a \p\ in the symmetric group
$\Sn_{n}$. We denote by $|\tau|:=n$ the \emm length, of $\tau$.
An \emm ascending, (resp. \emm descending,)  subsequence of
$\tau$ of length $k$ is 
a $k$-tuple $(\tau(i_1), \ldots, \tau(i_k))$ such that $i_1 <\cdots <
i_k$ and $\tau(i_1)< \cdots < \tau(i_k)$ (resp. $\tau(i_1)> \cdots >
\tau(i_k)$). 
For $m\ge 1$, the
set of \ps\ in which all ascending subsequences have length at most $m$
  is denoted by $\Sn^{(m)}$.
In pattern-avoidance terms, the \ps\ of $\Sn^{(m)}$
are those that \emm avoid, the increasing pattern \incm, and an ascending
subsequence of length $k$ is an \emm occurrence, of the pattern
$12\cdots k$.
The set of
\incm-avoiding \ps\ of length $n$ is denoted $\Sn_n^{(m)}$.  Note that
several families of pattern avoiding \ps\ are equinumerous with
$\Sn_n^{(m)}$ (see~\cite{bwx,kratt-growth-diagrams}).

In 1990, Gessel proved a beautiful determinantal formula for what
could be called the \emm Bessel, \gf\ of \ps\ of $\Sn^{(m)}$.
This formula was the starting point of Baik, Deift and Johansson's
study of the distribution of the longest ascending subsequence in a
random \p\ \cite{baik-deift-johansson}.

\begin{Theorem}[\cite{gessel-symmetric}]\label{thm:p}
  The Bessel \gf\ of \ps \ avoiding \incm\ is
$$
  \sum_{\tau \in \Sn^{(m)}}\frac{t^{2|\tau|}}{|\tau|!^2} =
\det \left(I_{i-j}\right)_{1\le i,j\le m},
$$
where
\beq\label{Idef}
I_i=\sum_{n\ge \max(0, -i)} \frac{t^{2n+i}}{n! (n+i)!}.
\eeq
\end{Theorem}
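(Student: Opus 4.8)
I would prove Theorem~\ref{thm:p} through the Robinson--Schensted correspondence rather than through the insertion-based construction announced in the abstract. Recall that RSK is a bijection $\tau\mapsto(P,Q)$ between $\Sn_n$ and pairs of standard Young tableaux of a common shape $\lambda\vdash n$, under which the longest ascending subsequence of $\tau$ has length $\lambda_1$, the first part of $\lambda$. Hence $\tau\in\Sn^{(m)}$ exactly when $\lambda$ fits into $m$ columns, equivalently when the conjugate partition $\mu=\lambda'$ has at most $m$ parts; since $f^\lambda=f^{\lambda'}$, where $f^\nu$ denotes the number of standard tableaux of shape $\nu$, the left-hand side of the theorem becomes
$$
\sum_{n\ge0}\frac{t^{2n}}{n!^2}\sum_{\substack{\mu\vdash n\\ \ell(\mu)\le m}}(f^\mu)^2
=\sum_{\mu:\ell(\mu)\le m}t^{2|\mu|}\left(\frac{f^\mu}{|\mu|!}\right)^{2}.
$$

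Next I would insert the classical Frobenius--Young (hook-length) formula for $f^\mu$. Padding $\mu$ with zeros to length $m$ and putting $\ell_i=\mu_i+m-i$, the tuple $(\ell_1,\dots,\ell_m)$ ranges exactly over the strictly decreasing $m$-tuples of nonnegative integers, one has $|\mu|=\sum_i\ell_i-\binom m2$, and $f^\mu/|\mu|!=\prod_{i<j}(\ell_i-\ell_j)\big/\prod_i\ell_i!$. The sum becomes
$$
t^{-2\binom m2}\sum_{\ell_1>\cdots>\ell_m\ge0}\ \prod_{i<j}(\ell_i-\ell_j)^2\prod_{i=1}^m\frac{t^{2\ell_i}}{\ell_i!^2},
$$
and since the summand is symmetric and vanishes whenever two of the $\ell_i$ agree, the ordering constraint can be dropped at the cost of a factor $1/m!$. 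I would then write the squared Vandermonde as the product $\det\!\big(\ell_i^{\underline{m-j}}\big)_{i,j}\det\!\big(\ell_i^{\underline{m-k}}\big)_{i,k}$, using the falling-factorial basis $\ell^{\underline{0}},\ell^{\underline{1}},\dots$ of the polynomial ring (elementary column operations turn either determinant into $\prod_{i<j}(\ell_i-\ell_j)$), and apply Andr\'eief's Cauchy--Binet-type summation identity. Working in formal power series in $t$, the whole expression then collapses to $t^{-2\binom m2}$ times the single $m\times m$ determinant whose $(j,k)$ entry is $\sum_{\ell\ge0}\ell^{\underline{m-j}}\ell^{\underline{m-k}}\,t^{2\ell}/\ell!^2$.

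It then remains to recognise these entries. Using $\ell^{\underline{m-j}}/\ell!=1/(\ell-m+j)!$ and re-indexing the inner sum, the $(j,k)$ entry equals $t^{2m-j-k}I_{j-k}$; the symmetry in $j$ and $k$ uses $I_{-i}=I_i$, immediate from~\eqref{Idef}. Pulling $t^{m-j}$ out of row $j$ and $t^{m-k}$ out of column $k$ multiplies the determinant by $t^{2\binom m2}$, cancelling the prefactor and leaving exactly $\det(I_{i-j})_{1\le i,j\le m}$. The one step here that is not pure routine --- the rest being standard tools plus bookkeeping of factorials and powers of $t$ --- is the choice of polynomial basis inside the Vandermonde: only the falling factorials make the determinant's entries come out as precisely the Bessel series $I_{i-j}$ of~\eqref{Idef} rather than as some other, less recognisable combination of them. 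I should add that the abstract announces a substantially different argument: Theorem~\ref{thm:p} is to be reproved \emph{without} RSK, from the insertion-of-the-largest-entry construction, through a functional equation with $m-1$ catalytic variables solved by the kernel method --- an approach that additionally produces the constant-term representations mentioned in the abstract.
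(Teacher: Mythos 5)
Your proof is correct, but it is not the paper's proof --- it is essentially the classical derivation (RSK, the Frobenius--Young formula, symmetrization, and Andr\'eief's summation identity), of the same family as the tableau-based proofs of Novak and Xin cited in the introduction. Each step checks out: the reindexing $\ell_i=\mu_i+m-i$, the factor $1/m!$ from dropping the ordering, the identity $\ell^{\underline{m-j}}/\ell!=1/(\ell-m+j)!$ with the convention $1/n!=0$ for $n<0$ (which also absorbs the boundary of summation in~\eqref{Idef}), and the cancellation of $t^{\pm2\binom m2}$ via row and column scalings. The paper deliberately avoids this route: its stated goal is to derive the formula directly from the generating-tree construction (insertion of the largest entry), which forces one to track $m-1$ catalytic parameters --- the positions of the leftmost increasing subsequences of each length --- and to solve the resulting functional equation of Proposition~\ref{prop:eq-p} by a change of variables, an orbit sum over $\Sn_m$ acting on the kernel $1-t(\bx_1+\cdots+\bx_m)$, and the coefficient-extraction operator $\Lambda$ of Definition~\ref{def:Lambda}, followed by the evaluation in Proposition~\ref{prop:eval-sum}. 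What your approach buys is brevity and transparency, at the cost of invoking RSK and the hook-length formula as black boxes; what the paper's approach buys is a template for equations with many catalytic variables (applicable to objects with no RSK-type theory), together with the constant-term representations of Proposition~\ref{prop:p-extr} as a by-product. You correctly flag this discrepancy yourself, so there is no misunderstanding --- only a different, and in this context non-novel, proof.
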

Note that $I_i=I_{-i}$, and that we can more loosely write
$$
I_i=\sum_{n\ge 0} \frac{t^{2n+i}}{n! (n+i)!}=\sum_{n\ge 0} \frac{t^{2n-i}}{n! (n-i)!},
$$
provided we interpret factorials as Gamma functions (in particular,
$1/i!=1/\Gamma(i+1)=0$ if $i<0$).

Gessel's original proof was algebraic in nature~\cite{gessel-symmetric}. He first established 
a  determinantal identity dealing with Schur functions
(and hence with semi-standard Young tableaux, whereas the above theorem
deals, via Schensted's correspondence, with standard tableaux). He
then applied to this identity  an operator $\theta$ that extracts certain
coefficients, and this led to Theorem~\ref{thm:p}.
A few years later, Krattenthaler found a bijective proof of Gessel's
Schur function identity~\cite{krattenthaler-firenze},  which  specializes into a bijective
proof of Theorem~1. Then, Gessel, Weinstein and Wilf gave two bijective
proofs of this theorem, involving sign-reversing
involutions~\cite{gessel-weinstein-wilf}. Two other proofs, involving
Young tableaux, were recently published by Novak~\cite{novak} and
Xin~\cite{xin}.

For small values of $m$,   more proofs of
Theorem~\ref{thm:p} have been given.
In particular, there exists a
 wealth of ways of proving  that the number of 123-avoiding \ps\ of
 $\Sn_n$ is the $n^{\hbox{\small th}}$ Catalan number ${{2n} \choose
   n}/(n+1)$, and numerous refinements of this result~\cite{bloom-saracino,mbm-motifs,elizalde-pak,knuth3,kratti,robertson,robertson-saracino-zeilberger,rotem,west-catalan}. The laziest proof (combinatorially speaking) is based on
 the following observation: a 
 \p\ $\pi$ of $\Sn_{n+1}^{(2)}$ is obtained by inserting $n+1$ in a
 \p\ $\tau$ of  $\Sn_n^{(2)}$. To avoid the creation of an ascending
 subsequence of length 3, the insertion must not take place to the right
 of the leftmost ascent of $\tau$. Hence, in order to exploit this
 simple recursive description of \ps\ of $\Sn^{(2)}$, one must keep
 track of the position of the first ascent.  Let us denote
$$
a(\tau)=  
\left\{
\begin{array}{ll}
n+1, & \hbox{if } \tau \hbox{ avoids } 12;
\\
  \min\,\{i:  \tau(i-1)<  \tau(i)\},
& \hbox{otherwise,}
\end{array}
\right.
$$
and define the bivariate \gf\
$$
F(u;t):=\sum_{\tau\in \Sn^{(2)}} u^{a(\tau)-1}t^{|\tau|}.
$$
It is not hard to see (and this will be explained in details
  in Section~\ref{sec:gen}) that  the recursive description of
  \ps\ of $\Sn^{(2)}$ translates into the following equation:
\beq\label{eq:m=2}
\left(1-t\, \frac{u^2}{u-1}\right) F(u;t)= 
1-t\, \frac {u}{u-1}F(1;t).
\eeq
The variable $u$ is said to be \emm catalytic, for this equation. This
means that one cannot simply set $u=1$ to solve for $F(1;t)$ first.
However, this equation can be solved using the so-called \emm kernel method,
(see, e.g.,~\cite{hexacephale,bousquet-petkovsek-1,prodinger}): one
specializes $u$ to the unique power series $U$ that cancels the
 \emm kernel, of the equation (that is, the coefficient of $F(u;t)$):
$$
U:=\frac{1-\sqrt{1-4t}}{2t}.
$$
This choice cancels the
 left-hand side of the equation, and thus its right-hand side, yielding the (ordinary) length \gf\ of 123-avoiding \ps:
$$
F(1;t)= \frac {U-1}{t U}=U=\frac{1-\sqrt{1-4t}}{2t}=\sum_{n\ge 0}
\frac{t^n}{n+1}{{2n}\choose n}.
$$

It is natural to ask whether this approach can be generalized to a
generic value of $m$: after all,  a 
 \p\ $\pi$ of $\Sn_{n+1}^{(m)}$ is still obtained by inserting $n+1$ in a
 \p\ $\tau$ of  $\Sn_n^{(m)}$. However, to avoid creating an ascending
 subsequence of length $m+1$, the insertion must not take place to the
 right of the leftmost ascending subsequence of length $m$ of
 $\tau$. In order  to keep track \emm recursively, of the position 
of this
 subsequence, one must also keep track  of the position of the
 leftmost ascending subsequence of length $m-1$. And so on! Hence this
 recursive construction (often called the \emm generating tree,
 construction~\cite{west-catalan,west-gen})  translates into a functional equation involving
 $m-1$ catalytic variables $u_2, \ldots, u_{m}$. The whole point is
 to \emm solve, this equation, and this is what we do in this paper.
Our method combines three ingredients: an appropriate change of
variables, followed by what is essentially the reflection
principle~\cite{gessel-zeilberger}, but
performed at the level of power series, and finally a coefficient extraction. To warm up, we illustrate these 
ingredients in Section~\ref{sec:ex} by two simple examples: we first
give another solution of the equation~\eqref{eq:m=2} obtained when
$m=2$, and then a \gf\ proof of MacMahon's formula for the number of
standard tableaux of a given shape. 

\medskip
What is the interest of this exercise? Firstly, we believe it answers a
natural question: we have in one hand a simple recursive construction
of certain permutations, in the other hand a nice expression for
their \gf, and it would be frustrating not to be able  to derive the
expression from the construction. Secondly, the combinatorial
literature abunds in  objects that can be described
recursively by keeping track of an arbitrary (but bounded) number of
additional (or: catalytic) parameters: permutations of course, but also
lattice paths, tableaux, matchings, plane partitions, set partitions... Some, but not
all, can be solved by the reflection principle, and we hope that this
first solution of an equation with $m$ catalytic variables will be
followed by others.

In fact, we provide in this paper another application of our
approach, still in the field of \ps: We recover a determinantal formula
for the enumeration of \emm involutions, with no long descending
subsequence~\cite{gessel-symmetric}.
Let  $\In^{(m)}$ (resp. $\In_n^{(m)}$) denote the set of involutions
(resp. involutions of length $n$) avoiding 
the  \emm decreasing,  pattern \decm.  Again,
several families of pattern avoiding involutions  are equinumerous with
$\Sn_n^{(m)}$ (see~\cite{mbm-einar,dukes,jaggard-marincel,kratt-growth-diagrams}).
\begin{Theorem}\label{thm:i}
  The exponential \gf\ of \invs \ avoiding \decm\ is
$$
\sum_{\tau \in \In^{(m)} } \frac{t^{|\tau|}}{|\tau|!}=
  \left\{
  \begin{array}{ll}
  e^t\, \det \left(I_{i-j}- I_{i+j}\right)_{1\le i,j\le \ell},
& \hbox{ if } m=2\ell+1;
\\ 
\det \left(I_{i-j}+ I_{i+j-1}\right)_{1\le i,j\le \ell},
& \hbox{ if } m=2\ell,
 \end{array}\right.
$$
where $I_i$ is defined by~\eqref{Idef}.
\end{Theorem}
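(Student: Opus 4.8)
The plan is to mimic, for involutions, the route that yields Theorem~\ref{thm:p}: construct $\In^{(m)}$ recursively, encode the construction in a functional equation with finitely many catalytic variables, solve that equation by an appropriate change of variables followed by the reflection principle (run on \fps), and finally extract the exponential \gf. It is convenient to carry a variable $q$ marking fixed points throughout the computation; the two formulas of Theorem~\ref{thm:i} are then the specialisation $q=1$, and the refinement by fixed points comes along for free.

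\textbf{Step 1: a generating tree and its functional equation.} Every involution arises from a smaller one by adjoining the cycle through its largest entry --- either a fixed point, or a transposition $(j,n{+}2)$; in one-line notation this inserts a new coordinate at some slot $j$ (together with, in the transposition case, the new largest value at the right end). The plot of an involution is symmetric about the main diagonal, and this symmetry sends decreasing subsequences to decreasing subsequences; hence the requirement ``no decreasing subsequence of length $m+1$ is created'' translates into a few inequalities on $\tau$ of the shape ``the longest decreasing subsequence lying weakly to the right of slot $j$ (and, for a transposition, also weakly above value $j$) must be short enough''. Recording the positions of the leftmost decreasing subsequences of successive lengths as catalytic variables $u_2,u_3,\dots$ gives a closed functional equation; but here the diagonal symmetry reduces the effective number of catalytic parameters --- and, with it, the size of the final determinant --- by roughly a factor of two, which is why the answer is an $\ell\times\ell$ determinant with $\ell=\lfloor m/2\rfloor$ rather than an $m\times m$ one.

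\textbf{Steps 2 and 3: change of variables and reflection.} Following the examples of Section~\ref{sec:ex}, I would substitute rational or exponential expressions for the $u_i$ so that the kernel of the equation becomes a product of factors linear in new variables and invariant under a finite reflection group. For $\Sn^{(m)}$ this group merely permutes the new variables; for $\In^{(m)}$ the extra diagonal symmetry upgrades it to the hyperoctahedral group of signed permutations, of type $B_\ell$ --- and this is exactly what produces the pattern $I_{i-j}\pm I_{i+j}$: the sign-changing hyperplane, which descends from the main diagonal and is therefore felt by the fixed points, is responsible both for the ``$+$''/``$-$'' and for the split between $m=2\ell$ and $m=2\ell+1$. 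One then writes the naive solution of the equation (ignoring that the coefficients must be non-negative) and antisymmetrises it over $B_\ell$: the alternating sum cancels every spurious monomial and leaves a \fps\ whose coefficients plainly count the objects sought, which must therefore be the true solution. This \fps\ version of the Gessel--Zeilberger reflection principle~\cite{gessel-zeilberger} is the technical core of the argument.

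\textbf{Step 4: coefficient extraction and the two determinants.} Setting the catalytic variables to $1$ and reading off the relevant constant term turns the alternating sum into a sum over $B_\ell$ of products of coefficients $t^{2n+i}/(n!\,(n+i)!)$, i.e.\ of Bessel coefficients $I_{i-j}$ together with the coefficients $I_{i+j}$ (when $m=2\ell+1$) or $I_{i+j-1}$ (when $m=2\ell$) contributed by the sign-change part of the group; in the case $m=2\ell+1$ a factor $e^t=\sum_{k\ge0}t^k/k!$ additionally factors out, which one may read as the exponential contribution of the fixed points in odd rank. Recognising what remains as the Leibniz expansion of $\det(I_{i-j}-I_{i+j})_{1\le i,j\le\ell}$, respectively $\det(I_{i-j}+I_{i+j-1})_{1\le i,j\le\ell}$, yields Theorem~\ref{thm:i}. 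The genuinely delicate steps are Step~1 --- finding catalytic statistics for which the diagonal symmetry really does halve the effective number of variables while keeping the equation closed --- and the type-$B$ form of Step~3, where the reflecting hyperplane arising from the diagonal has to be treated with care; everything after that is essentially the bookkeeping already carried out for $\Sn^{(m)}$, and the specialisations to $q=1$ and to ordinary \gfs\ are routine.
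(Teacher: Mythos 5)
Your plan follows the paper's own route essentially step for step: the Jaggard--Marincel generating tree with $\ell=\lfloor m/2\rfloor$ catalytic parameters coming from the diagonal symmetry, a change of variables making the kernel invariant under the hyperoctahedral group $B_\ell$, a signed orbit sum that eliminates the unknown boundary terms, and a coefficient extraction in which the variables decouple in the exponential \gf\ to give the entries $I_{i\pm j}$ (with the $e^t$ factor in the odd case). The details you leave open (the precise substitution $v_i=1/(1-t(x_i+\cdots+x_\ell))$, the multiplier $M(x)$, and the check that each boundary term is fixed by a generator of $B_\ell$) are exactly the ones the paper supplies, so the proposal is a correct outline of the same proof.
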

This result is obtained by applying Gessel's $\theta$ operator to a
Schur function identity due to Bender and Knuth~\cite{bender-knuth}.
The latter identity has been refined by taking into account the number of
columns of odd size in the tableaux (see Goulden~\cite{goulden}; 
Krattenthaler then gave a bijective proof of this refinement~\cite{krattenthaler-firenze}). Using the
operator $\theta$, and the properties of Schensted's
correspondence~\cite[Exercise~7.28]{stanley-vol2}, this translates into a refinement of
Theorem~\ref{thm:i} that takes into account  the number
$f(\tau)$ of fixed points in $\tau$. We shall also recover this result.
\begin{Theorem}\label{thm:i-f}
If $m=2\ell+1$,  the exponential \gf\ of \invs \ avoiding \decm\ and
having $p$ fixed points is
$$
\sum_{\tau \in \In^{(m)} , f(\tau)=p } \frac{t^{|\tau|}}{|\tau|!} =
  \frac {t^p} {p!}\, \det \left(I_{i-j}- I_{i+j}\right)_{1\le i,j\le \ell}.
$$
If $m=2\ell$,  this \gf\ is 
$$
\sum_{\tau \in \In^{(m)} \atop f(\tau)=p} \frac{t^{|\tau|}}{|\tau|!} 
= \det \left(
\begin{array}{c}
 (I_{p+\ell-j}-I_{p+\ell+j} )_{1\le j \le \ell}
\\
 (I_{i+j-1} - I_{i-j-1})_{2\le i \le \ell, 1\le j \le \ell},
\end{array}\right)
$$
where we have described separately the first row of the determinant and the
next $\ell-1$ rows ($i=2, \ldots, \ell)$.
\end{Theorem}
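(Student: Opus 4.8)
The plan is to re-run, with one extra variable, the generating-tree/kernel-method derivation that establishes Theorem~\ref{thm:i}. Write $v$ for the variable conjugate to the number of fixed points. Recall that an \inv\ of $\In^{(m)}$ is built recursively by adjoining, to a smaller one, a cycle containing the largest entry: this cycle is either a fixed point, placed in the last position, or a $2$-cycle. Appending a fixed point increases both $|\tau|$ and $f(\tau)$ by $1$ and, crucially, \emph{does not modify any of the catalytic statistics} — the positions of the leftmost decreasing subsequences of lengths $2,\ldots,m$ — because the largest value sitting in the last position can only be the first entry of a decreasing subsequence. Adjoining a $2$-cycle increases $|\tau|$ by $2$, leaves $f(\tau)$ unchanged, and is subject to exactly the same constraints as in the proof of Theorem~\ref{thm:i}. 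Hence the functional equation for the refined series is obtained from the one used for Theorem~\ref{thm:i} simply by multiplying, on its right-hand side, the term that encodes ``append a fixed point'' by $v$.

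I would then solve this equation with the three ingredients used throughout the paper: an appropriate change of variables, the reflection argument carried out on power series (a signed summation over the relevant hyperoctahedral reflection group, which is what produces the determinants $\det(I_{i-j}\pm I_{i\pm j})$), and a final coefficient extraction. The variable $v$ is inert under all three steps, so it just propagates. In the odd case $m=2\ell+1$, the ``append a fixed point'' move is available from every state of the generating tree and commutes with every $2$-cycle move while fixing the catalytic data; this is precisely what, at $v=1$, produces the factor $e^{t}$ in Theorem~\ref{thm:i}, and with $v$ switched on it becomes $e^{vt}$. The kernel-method solution then reads $e^{vt}\det(I_{i-j}-I_{i+j})_{1\le i,j\le\ell}$ (at $v=1$ this is Theorem~\ref{thm:i}, and at $v=0$ it is the \gf\ of fixed-point-free \invs\ of $\In^{(m)}$), and extracting the coefficient of $v^{p}$ gives $\frac{t^{p}}{p!}\det(I_{i-j}-I_{i+j})_{1\le i,j\le\ell}$, which is the first assertion.

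The even case $m=2\ell$ is where the real work lies, and I expect it to be the main obstacle. Here the $e^{t}$ phenomenon is absent already at $v=1$ (Theorem~\ref{thm:i} has no exponential prefactor in this case), so the fixed-point variable cannot simply factor out; instead it interacts with the boundary of the cone involved in the reflection step. Concretely, in the even case the reflection group attached to ``decreasing subsequences of length $m$'' involves a half-integer shift, and one of its walls is the one responsible, in the unrefined problem, for the absence of the $e^t$ factor; the weight gathered by the (reflected) lattice-path model when it meets that wall therefore carries a factor recording the number $p$ of fixed points still available. Tracking this factor through the symmetrization and the coefficient extraction should turn the $\ell\times\ell$ determinant $\det(I_{i-j}+I_{i+j-1})$ into the determinant whose first row is $(I_{p+\ell-j}-I_{p+\ell+j})_{1\le j\le\ell}$ and whose remaining rows are $(I_{i+j-1}-I_{i-j-1})_{2\le i\le\ell,\ 1\le j\le\ell}$. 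Two checks should pin down the bookkeeping: at $p=0$, using $I_i=I_{-i}$, the determinant must collapse to $\det(I_{i-j}+I_{i+j-1})$; and summing over $p\ge0$, by multilinearity in the first row together with a telescoping of the $I_i$'s and suitable column operations, it must reproduce $\det(I_{i-j}+I_{i+j-1})$, in agreement with Theorem~\ref{thm:i}. Verifying that the determinant delivered by the kernel method genuinely has this ``one distinguished row'' shape, and handling the half-integer shift correctly, is the delicate point; everything else is a routine transcription of the arguments already used for Theorems~\ref{thm:p} and~\ref{thm:i}, now carrying the passive variable $v$.
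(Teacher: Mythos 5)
Your reduction of the problem to a refined functional equation is sound, and your treatment of the odd case $m=2\ell+1$ is essentially the paper's: there the fixed-point weight only perturbs the constant term of the kernel, which becomes $1-t(s+x_1+\cdots+x_\ell+\bx_1+\cdots+\bx_\ell)$, so the whole derivation of Theorem~\ref{thm:i} goes through verbatim and yields $e^{st}\det(I_{i-j}-I_{i+j})$. (One correction to your justification: inserting a fixed point \emph{does} change the catalytic statistics --- they are measured from the right end of the word, so they all increase by $1$, and in the even case $a_1$ moreover resets to $1$ because the new fixed point lies on the diagonal. What is true, and what you actually use, is that the label-update rule of Proposition~\ref{prop:inv-tree} is unchanged, so the only modification to the equation is the extra factor $s$ on the fixed-point term.)

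The even case, however, is a genuine gap, and you flag it yourself. Your two consistency checks (specialization at $p=0$ and summation over $p$) are evidence for the formula, not a derivation of it, and the appeal to a half-integer shift and a wall carrying the fixed-point weight does not translate into a computation. What the paper actually does is concrete and is not a routine transcription of the unrefined argument: after the change of variables the fixed-point term reads $t(s-\bx_1)\Gd(0,x_2,\ldots,x_\ell;t,s)$, and to make it invariant under the generator $\si_1\colon x_1\mapsto\bx_1$ one must multiply the equation by the $s$-dependent polynomial $M(x;s)=x_2x_3^2\cdots x_\ell^{\ell-1}(s-x_1)\cdots(s-x_\ell)$ rather than by a monomial. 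The price is that $\si(M(x;s))/M(x;s)$ is no longer a monomial, so the single-coefficient extraction used for Theorem~\ref{thm:i} fails; instead one extracts the whole slice $[x_1^{\ge 0}x_2^1\cdots x_\ell^{\ell-1}]$ of the orbit sum and then specializes $x_1=s$, which annihilates the contributions of the unsigned permutations (they are multiples of $s-x_1$) and isolates $-s^\ell\,\Gd(0,\ldots,0;t,s)$; finally a row operation reduces the numerator to $\det(\bx_j^{\,i}-x_j^{\,i})$, and the coefficient extraction in $x_1$ is what produces the distinguished first row $(I_{p+\ell-j}-I_{p+\ell+j})$. None of these steps (the $(s-x_i)$ factors, the partial extraction in $x_1$, the evaluation at $x_1=s$) appears in your outline, and without them the passage from the refined equation to the stated determinant is unproved.
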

The first result of Theorem~\ref{thm:i-f} can be restated as follows: if $m=2\ell+1$,
the  \gf\ of \invs \ avoiding \decm, counted by the length and number of fixed
points is
\beq\label{inv-fp}
\sum_{\tau \in \In^{(m)}} \frac{t^{|\tau|}}{|\tau|!} s^{f(\tau)}=
   e^{st}\, \det \left(I_{i-j}- I_{i+j}\right)_{1\le i,j\le \ell}.
\eeq
It thus  appears as a very simple extension of the first part of
Theorem~\ref{thm:i}, and indeed, the connection between these two
formulas is easy to justify  combinatorially (the fixed points play no role when one forbids a decreasing
pattern of even length). 

\medskip
Let us now outline the structure of the paper. In Section~\ref{sec:gen},
we  describe how the ``catalytic'' parameters change in the
recursive construction  of \ps\ of
$\Sn^{(m)}$ and  $\In^{(m)}$. We do not give the proofs, as this was
done  by Guibert
and Jaggard \& Marincel,
respectively~\cite{guibert-these,jaggard-marincel}. We then convert
these descriptions into
the functional equations that are at the heart of this paper (Propositions~\ref{prop:eq-p} and~\ref{prop:eq-f-i}).
In Section~\ref{sec:ex}, we illustrate our approach by two simple
examples, namely the enumeration of \ps\ of 
$\Sn^{(2)}$ and of standard Young tableaux.
Next we return to \ps: we
first address in Section~\ref{sec:inv} the solution of the equation obtained
for involutions  of $\In^{(m)}$, and
finally, we solve in Section~\ref{sec:perm} the equation obtained for \ps\ of
$\Sn^{(m)}$. The reason why we address involutions first is that the
solution is  really elementary  in this
case. One step of the solution turns  out to be more difficult in the
case of \ps, although the basic ingredients are the same.

\medskip

Let us finish with some standard definitions and notation.
Let $A$ be a commutative ring and $x$ an indeterminate. We denote by
$A[x]$ 
(resp. $A[[x]]$) 
the ring of polynomials
 (resp. \fps) in $x$
with coefficients in $A$. If $A$ is a field, then $A(x)$ denotes the field
of rational functions in $x$ (with coefficients in $A$).
This notation is generalized to polynomials, fractions
and series in several indeterminates. We 
denote $\bx=1/x$, so that $A[x,\bx]$ is the ring of Laurent
polynomials in $x$ with coefficients in $A$.
A \emm Laurent series, is a series of the form $\sum_{n\ge n_0} a(n)
x^n$, for some $n_0\in \zs$.
The coefficient of $x^n$ in  $F(x)$ is denoted
$[x^n]F(x)$. 

Most of the series that we use in this paper are power series in $t$ with coefficients in
$A[x,\bx]$, that is, series of the form 
$$
F(x;t)=\sum_{n\ge 0, i\in \zs} f(i;n) x^i t^n,
$$
where for all $n$, almost all coefficients $f(i;n)$ are zero. The
\emm positive part, of $F(x;t)$ in $x$ is the following series, which
has coefficients in $xA[x]$:
$$
[x^>]F(x;t):=\sum_{n\ge 0, i>0} f(i;n) x^i t^n.
$$
We define similarly the negative, non-negative and non-positive parts of
$F(x;t)$ in $x$, which we denote respectively  by $[x^<]F(x;t), [x^\ge]F(x;t)$ and
$[x^\le]F(x;t)$.


\section{Generating trees and functional equations}
\label{sec:gen}
\subsection{Permutations avoiding \incm}
\label{sec:tree-p}
Take a  permutation $\pi$ of $\Sn_{n+1}^{(m)}$, written as the word
$\pi(1)\cdots \pi(n+1)$. Erase from this word the value
$n+1$: this gives a permutation $\tau$ of $\Sn_{n}^{(m)}$. This property allows us to display the \ps\ of $\Sn^{(m)}$ as the nodes of a
\emm generating tree,. At the root of this tree sits the unique \p\,
of length $0$, and the children of a node indexed by $\tau\in
\Sn_{n}^{(m)}$ are the \ps\ of $\Sn_{n+1}^{(m)}$ obtained by inserting
the value $n+1$ in $\tau$. In how many ways is this insertion possible?
If $\tau$ avoids $12\cdots m$, then all insertion positions are
\emm admissible,, that is, give a \p \ of  $\Sn_{n+1}^{(m)}$. There
are $n+1$ such positions. Otherwise, 
only the $a$ leftmost insertion positions are admissible, where $a$ is
the position of the leftmost occurrence of   $12\cdots m$ in 
$\tau$. More precisely:
$$
a= \min\,\{i_m: \exists\ i_1<i_2<\cdots <i_m \hbox{ s.t. } \tau(i_1)< \cdots
  < \tau(i_m)\}.
$$

As
we wish to describe \emm recursively, the shape of the
generating tree, we now need to find the position of  the leftmost
occurrence of   $12\cdots m$ in 
the children  of $\tau$. But it is easily seen that this depends
on the position of the leftmost occurrence of   $12\cdots (m-1)$ in
$\tau$. And so on! We are thus led to define the following $m$ parameters: for
$1\le j\le m$, and $\tau  \in \Sn_{n}^{(m)}$, let
\beq\label{aj-def}
a_j(\tau)=  
\left\{
\begin{array}{ll}
n+1, & \hbox{if } \tau \hbox{ avoids } 12\cdots j;
\\
  \min\,\{i_j: \exists\ i_1<i_2<\cdots <i_j \hbox{ s.t. } \tau(i_1)< \cdots
  < \tau(i_j)\},
& \hbox{otherwise.}
\end{array}
\right.
\eeq
Note that $a_1(\tau)=1$, and that $a_1(\tau)\le \cdots \le a_m(\tau)$. We call the sequence $L(\tau):=(a_2(\tau), \ldots,
a_m(\tau))$ the \emm label, of $\tau$. The empty \p\ has label $(1,
\ldots, 1)$. 

We can now describe the labels
of the children of $\tau$ in terms of $L(\tau)$ (Guibert~\cite[Prop.~4.47]{guibert-these}).

\begin{Proposition}
\label{prop:rec-p}
Let $\tau  \in \Sn_{n}^{(m)}$ with  $L(\tau)=(a_2, \ldots,
a_m)$. Denote $a_1=1$. The labels of the $a_m$ \ps\ of $\Sn_{n+1}^{(m)}$
obtained by inserting $n+1$ in $\tau$ are
$$
\left\{
\begin{array}{{ll}}
(a_2+1, a_3+1, \ldots, a_m+1)  & 
%
\\
(a_2, \ldots, a_{j-1}, \al , a_{j+1}+1, \ldots,  a_m+1)
 & \hbox{for } 2\le j \le m \hbox{ and } a_{j-1}+1 \le \al \le a_{j}.
\end{array}\right.
$$
\end{Proposition}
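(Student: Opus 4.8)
The plan is to argue directly from the insertion construction, tracking how each of the statistics $a_j$ changes when the value $n+1$ is inserted into $\tau \in \Sn_n^{(m)}$ at a given position. Label the possible insertion positions $1, 2, \ldots, a_m$ from left to right, where position $k$ means that $n+1$ becomes the $k$-th entry of the new word $\pi$, so that $\pi(k) = n+1$ and the entries $\tau(1), \ldots, \tau(k-1)$ keep their places while $\tau(k), \ldots, \tau(n)$ are shifted one slot to the right. The key local observation is this: since $n+1$ is larger than every other entry, an ascending subsequence of $\pi$ of length $j$ either avoids the position $k$ of $n+1$ entirely — in which case it corresponds to an ascending subsequence of $\tau$ of length $j$ lying in the unshifted/shifted copy — or it ends at position $k$, in which case removing $n+1$ leaves an ascending subsequence of $\tau$ of length $j-1$ using only positions $< k$. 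This dichotomy is what propagates the dependence of $a_j(\pi)$ on $a_{j-1}(\tau)$.

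First I would handle the position $k=1$ (insert $n+1$ at the very front). Then $n+1$ can never be part of an ascending subsequence of length $\ge 2$ (nothing lies to its left, and everything to its right is smaller), so the leftmost occurrence of $12\cdots j$ in $\pi$ is just the leftmost occurrence in $\tau$ shifted right by one: $a_j(\pi) = a_j(\tau)+1$ for all $j \ge 2$ (with the convention that $n+2 = (n+1)+1$ covers the "avoids" case correctly). This gives the first line, label $(a_2+1, \ldots, a_m+1)$. Next, for an insertion at position $k$ with $2 \le k \le a_m$, I would determine the unique index $j \in \{2, \ldots, m\}$ with $a_{j-1} < k \le a_j$ (using $a_1 = 1 < k$ and the weak monotonicity $a_1 \le \cdots \le a_m$; note $k \le a_m$ guarantees such a $j$ exists, and it is unique among indices where the inequality is strict on the left). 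The claim to verify is: for $i < j$, inserting at position $k$ does not create any new short ascending subsequence to the left of the existing leftmost $12\cdots i$, so $a_i(\pi) = a_i(\tau) = a_i$; for $i = j$, the entry $n+1$ at position $k$ now serves as the top of an ascending subsequence of length $j$ (prepend the leftmost $12\cdots(j-1)$ of $\tau$, which by definition ends at position $a_{j-1} < k$), and this is the leftmost such, so $a_j(\pi) = k =: \alpha$, with $a_{j-1}+1 \le \alpha \le a_j$; and for $i > j$, every leftmost occurrence of $12\cdots i$ in $\tau$ had its last index $\ge a_i > a_{j-1}$... more precisely, one checks that the leftmost $12\cdots i$ in $\pi$ ends one position later than in $\tau$, i.e. $a_i(\pi) = a_i + 1$. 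Running $k$ from $2$ to $a_m$ and collecting these by the value of $j$ yields exactly the second line of the statement, and the total count of children is $a_m$, matching the $a_m$ admissible insertion positions already identified before the proposition.

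The step I expect to be the genuine obstacle is the third case, $i > j$: one must show that after inserting $n+1$ at position $k \le a_j \le a_i$, the leftmost occurrence of $12\cdots i$ in $\pi$ ends exactly at $a_i + 1$, neither earlier (no shortcut is created) nor later. The "not earlier" direction requires checking that $n+1$ cannot be used to build a length-$i$ ascending subsequence ending strictly before $a_i+1$: since $n+1$ must be the last (largest) element of any ascending subsequence containing it, such a subsequence would end at position $k \le a_j < a_i + 1$ only if there were a $12\cdots(i-1)$ in $\tau$ confined to positions $< k \le a_j$, which is impossible because $a_{i-1}(\tau) \ge a_{j}(\tau) \ge k$ when $i-1 \ge j$; the borderline subcase $i - 1 = j$, where $a_{i-1} = a_j$ could equal $k$, needs the strict inequality $k \le a_j$ together with the observation that a $12\cdots(i-1)$ ending *at* position $a_j$ in $\tau$ becomes one ending at position $a_j + 1 > k$ in $\pi$, so it still cannot be capped by $n+1$ at position $k$. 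The "not later" direction is the routine shift argument. Once this bookkeeping is pinned down, and one double-checks the boundary conventions (the "$\tau$ avoids $12\cdots j$" case, where $a_j = n+1$, and its interaction with $\alpha$ ranging up to $a_j$), the proposition follows. This is essentially the content of Guibert's \cite[Prop.~4.47]{guibert-these}, so I would present the argument concisely and refer there for the routine verifications.
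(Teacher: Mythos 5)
Your argument is correct. Note that the paper itself gives no proof of this proposition: it explicitly defers to Guibert's thesis (Prop.~4.47 of \cite{guibert-these}), so there is nothing to compare against; your direct verification of how each statistic $a_i$ transforms under insertion at position $k$ (split according to the unique $j$ with $a_{j-1}<k\le a_j$, using that $n+1$ can only be the \emph{last} element of an ascending subsequence) is exactly the kind of case analysis that the cited reference carries out, and the three cases $i<j$, $i=j$, $i>j$ are all handled soundly, including the boundary conventions when $\tau$ avoids $12\cdots i$. The only point you leave slightly implicit is the ``not earlier'' check for occurrences of $12\cdots i$ ($i>j$) that do \emph{not} use $n+1$: such an occurrence comes from one in $\tau$ ending at position $\ge a_i\ge k$, hence ends at position $\ge a_i+1$ in $\pi$; this is the same routine shift argument you already invoke elsewhere, so it is a cosmetic rather than a genuine gap.
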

\noindent
The first label corresponds to an insertion in position 1, while the
label involving $\al$ corresponds to an insertion in position $\al$.
We refer the reader to Figure~\ref{fig:ex-1234} for an example.

\begin{figure}[ht!]
\begin{center} 
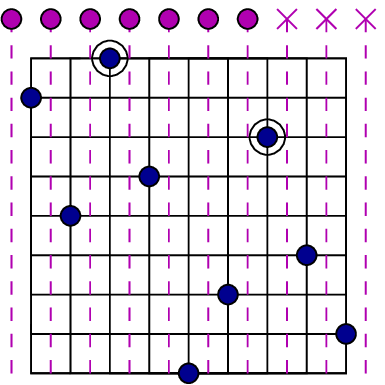
\caption{The \p\ $\tau = 8\ 5\ 9\ 6\ 1\ 3\  7\ 4\ 2 \ \ \in
  \Sn_9^{(3)}$. One has $a_1(\tau)=1$, $a_2(\tau)=3$,
  $a_3(\tau)=7$. There are 7 admissible ways to insert the value
  10. Inserting 10 to the right of  $\tau(7)$
  would create an occurrence of 1234.}
\label{fig:ex-1234} 
\end{center}
\end{figure}

\medskip
Let us now translate the recursive construction of \ps\ of $\Sn^{(m)}$
in terms of \gfs. Let $\Fa(u_2, \ldots,
u_m;t)$ be the (ordinary) \gf\ of \ps\  of $\Sn^{(m)}$, counted by
the statistics $a_2, \ldots, a_m$ and by the length:
\begin{eqnarray*}
  \Fa(u_2, \ldots,u_m;t)&=&\sum_{\tau \in \Sn^{(m)}} u_2^{a_2(\tau)}
\cdots u_m^{a_m(\tau)}t^{|\tau|}
\\
&=&
\sum_{a_2, \ldots, a_m} \Fa_{a_2, \ldots, a_m} (t) u_2^{a_2}\cdots u_m^{a_m}
\end{eqnarray*}
where $\Fa_{a_2, \ldots, a_m} (t)$ is the length \gf\ of \ps \ of
$\Sn^{(m)}$ having label $(a_2, \ldots, a_m)$.
We still denote $a_1=1$. The above proposition gives
\begin{multline*}
  \Fa(u_2, \ldots,u_m;t)= u_2\cdots u_m + 
tu_2\cdots u_m \Fa(u_2, \ldots,u_m;t)
\\
+t\, \sum_{a_2, \ldots, a_m} \Fa_{a_2, \ldots, a_m} (t) 
%
%
\sum_{j=2}^{m}
\sum_{\al=a_{j-1}+1}^{a_{j}}
 u_2^{a_2}\cdots u_{j-1}^{a_{j-1}} u_{j}^\al u_{j+1}^{a_{j+1}+1}
\cdots u_m^{a_m+1}.
\end{multline*}
Using
$$
\sum_{\al=a_{j-1}+1}^{a_{j}}u_{j}^\al=
\frac{u_{j}^{a_{j}+1}-u_{j}^{a_{j-1}+1} }
{u_j-1},
$$
we  obtain (given that $a_1=1$):
\begin{multline}\label{eq-func-p-u}
\Fa(u;t)= u_ {2,m}+  t u_ {2,m}\Fa(u;t)
+t  u_{2,m} \frac{\Fa(u;t)-u_2\Fa(1, u_3, \ldots,  u_m;t)}{u_2-1}
 \\
+ t\sum_{j=3}^{m}
u_{j,m} \frac{\Fa(u;t)-\Fa(u_2, \ldots, u_{j-2}, u_{j-1}u_j,1, u_{j+1},
    \ldots,  u_m;t)}{u_j-1}
\end{multline}
where $\Fa(u;t) \equiv \Fa(u_2, \ldots,u_m;t)$ and $u_{j,k}=u_j
u_{j+1} \cdots u_k$. 

To finish, let us perform an elementary transformation on the series $\Fa(u;t)$.
 Define
\beq\label{weight-p}
\Fc(v;t)=\Fc(v_1, \ldots, v_m;t)= \sum_{\tau\in \Sn^{(m)}}
v_1^{a_2-1} v_2^{a_3-a_2} \cdots v_m^{|\tau|+1-a_m} t^{|\tau|},
\eeq
where $(a_2, \ldots, a_m)=L(\tau)$. 
We have eliminated the dependence $a_2\le \cdots \le a_m$ between the
exponents of $u_2, \ldots, u_m$ in $\Fa(u;t)$. As will be seen below,
another effect of this change of series is that the cases $j=2$ and
$j=3, \ldots, m$ now play the same role. We also note that the
variable $t$ is now  redundant in $\Fc(v;t)$, but it is our main variable,
and we find it convenient to keep it.
The series $\Fa$ and $\Fc$ are  related by
$$
\Fc(v_1, \ldots, v_m;t)= \frac{v_m}{v_1}\Fa\left(\frac{v_1}{v_2},
\ldots, \frac{v_{m-1}}{v_m}; v_mt\right)
$$
and conversely
$$
\Fa(u_2, \ldots, u_m;v_m t)= u_{2,m} \Fc(u_{2,m}v_m, u_{3,m}v_m,
\ldots, u_m v_m, v_m; t)
$$
where as above $u_{j,k}=u_j
u_{j+1} \cdots u_k$. The functional equation~\eqref{eq-func-p-u}
satisfied by $\Fa(u;t)$ translates into an equation of  a slightly
simpler form satisfied by $\Fc(v;t)$.
\begin{Proposition}\label{prop:eq-p}
  The \gf\ $\Fc(v;t)\equiv \Fc(v_1, \ldots, v_m;t)$ of \ps\ of
  $\Sn^{(m)}$, defined by~\eqref{weight-p}, satisfies
$$
\Fc(v;t)= 1+ tv_1 \Fc(v;t)+ t\sum_{j=2}^{m} v_{j-1}v_{j}\ 
\frac{\Fc(v;t)-\Fc(v_1, \ldots, v_{j-2}, v_j, v_j, v_{j+1}, \ldots,
  v_m;t)}{v_{j-1}-v_j}.
$$
The series $\Fc(1, \ldots, 1;t)$ counts \ps\ of   $\Sn^{(m)}$ by their length.
\end{Proposition}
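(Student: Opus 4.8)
Here is a plan for proving Proposition~\ref{prop:eq-p}.

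The plan is to avoid manipulating the four-line equation~\eqref{eq-func-p-u} and instead translate the generating-tree rules of Proposition~\ref{prop:rec-p} directly into an equation for the series $\Fc$ of~\eqref{weight-p}; pushing the stated change of variables through~\eqref{eq-func-p-u} would give an equivalent but messier route. Throughout I write $a_j=a_j(\tau)$ and adopt the conventions $a_1=1$ and $a_{m+1}:=|\tau|+1$, so that the weight of $\tau\in\Sn^{(m)}$ becomes the uniform monomial $w(\tau)=t^{|\tau|}\prod_{j=1}^m v_j^{\,a_{j+1}-a_j}$, all exponents being nonnegative and summing to $|\tau|$. Since every nonempty $\pi\in\Sn^{(m)}$ is the child of exactly one $\tau$ --- the one obtained by deleting its largest entry --- one has $\Fc(v;t)=1+\sum_\tau\sum_\pi w(\pi)$, the inner sum running over the children of $\tau$ and the empty \p\ contributing its weight $1$. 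So it is enough to compute, for each fixed $\tau$, the total weight of its children relative to $w(\tau)$.

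Next I would read off from Proposition~\ref{prop:rec-p} how each child's monomial compares with $w(\tau)$. The child inserted in position $1$ has label $(a_2+1,\ldots,a_m+1)$ and length $|\tau|+1$, so every exponent $a_{j+1}-a_j$ is unchanged except the first, which rises by $1$; its weight is thus $tv_1w(\tau)$, contributing $tv_1\Fc(v;t)$ after summation. For fixed $j\in\{2,\ldots,m\}$ and $\alpha$ with $a_{j-1}+1\le\alpha\le a_j$, the child with label $(a_2,\ldots,a_{j-1},\alpha,a_{j+1}+1,\ldots,a_m+1)$ has length $|\tau|+1$, and a direct check --- uniform over $j$ thanks to the conventions $a_1=1$, $a_{m+1}=|\tau|+1$ --- shows that only the exponents of $v_{j-1}$ and $v_j$ change, from $a_j-a_{j-1}$ to $\alpha-a_{j-1}$ and from $a_{j+1}-a_j$ to $a_{j+1}+1-\alpha$. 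Consequently the total weight of these children, divided by $w(\tau)$, is the geometric sum
\[
t\sum_{\alpha=a_{j-1}+1}^{a_j} v_{j-1}^{\,\alpha-a_j}v_j^{\,a_j+1-\alpha}
= t\sum_{k=0}^{a_j-a_{j-1}-1} v_j\Bigl(\frac{v_j}{v_{j-1}}\Bigr)^{\!k}
= \frac{tv_{j-1}v_j}{v_{j-1}-v_j}\Bigl(1-\bigl(v_j/v_{j-1}\bigr)^{a_j-a_{j-1}}\Bigr).
\]

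Finally I would resum over $\tau$. The only $\tau$-dependent factor remaining is $(v_j/v_{j-1})^{a_j-a_{j-1}}$; multiplying it into $w(\tau)$ annihilates the exponent of $v_{j-1}$ and turns the monomial into the monomial of $\tau$ under the substitution $v_{j-1}\mapsto v_j$. Hence $\sum_\tau(v_j/v_{j-1})^{a_j-a_{j-1}}w(\tau)=\Fc(v_1,\ldots,v_{j-2},v_j,v_j,v_{j+1},\ldots,v_m;t)$, and summing the displayed quantity over all $\tau$ yields exactly $tv_{j-1}v_j\bigl(\Fc(v;t)-\Fc(v_1,\ldots,v_{j-2},v_j,v_j,v_{j+1},\ldots,v_m;t)\bigr)/(v_{j-1}-v_j)$. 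Adding the base term $1$ and the position-$1$ term $tv_1\Fc(v;t)$ gives the asserted functional equation. The last sentence is then immediate: each monomial $t^{|\tau|}\prod_j v_j^{a_{j+1}-a_j}$ specializes at $v_1=\cdots=v_m=1$ to $t^{|\tau|}$, and since $\Sn^{(m)}$ has finitely many elements of each length, $\Fc(1,\ldots,1;t)=\sum_{\tau\in\Sn^{(m)}}t^{|\tau|}$.

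I do not anticipate a genuine obstacle --- the argument is pure bookkeeping --- so the points that need care are minor: (i) checking that, under the conventions above, the ``only $v_{j-1},v_j$ change'' claim really covers the boundary cases $j=2$ (empty label-prefix) and $j=m$ (where $a_{m+1}$ plays the role of a length); and (ii) noting that although the equation displays the fractions $1/(v_{j-1}-v_j)$, every monomial occurring in $\Fc$ and in its listed specializations has nonnegative exponents, so these denominators are harmless and the substitution $v_1=\cdots=v_m=1$ is legitimate. As an independent sanity check one could substitute $\Fc(v;t)=\frac{v_m}{v_1}\Fa\!\left(\frac{v_1}{v_2},\ldots,\frac{v_{m-1}}{v_m};v_mt\right)$ into~\eqref{eq-func-p-u} and verify that the $j=2$ and $j\ge 3$ terms of~\eqref{eq-func-p-u} both collapse onto the single family of terms obtained above.
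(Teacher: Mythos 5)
Your argument is correct and follows the same essential route as the paper: both translate the generating-tree rule of Proposition~\ref{prop:rec-p} into a functional equation by summing the geometric series over the admissible insertion positions $\alpha$. The only difference is presentational --- the paper first writes the equation~\eqref{eq-func-p-u} in the $u$-variables and then performs the (elementary) change of variables to $\Fc$, whereas you carry out the same bookkeeping directly on the $v$-weights of~\eqref{weight-p}, which is a perfectly legitimate shortcut.
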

In Section~\ref{sec:perm}, we derive from this equation the Bessel
\gf\ of \ps\ of   $\Sn^{(m)}$, as given by Theorem~\ref{thm:p}.

\subsection{Involutions avoiding \decm}
\label{sec:tree-inv}
It follows from the properties 
of Schensted's correspondence~\cite{schensted}  that
the number of \invs\ of length $n$ avoiding \incm \ equals
the number of \invs\ of length $n$ avoiding \decm. 
However, this correspondence is not a simple symmetry, and the
generating trees  that describe \incm-avoiding \invs \ and
\decm-avoiding \invs\ are not isomorphic. Both trees are defined
by the same principle: the root is the empty \p\ and the parent of an
\inv\ $\pi$ is
obtained by deleting the cycle containing the largest entry, and
normalizing the resulting sequence. For
instance, if $\pi=426153$, the deletion of the 2-cycle $(3,6)$ first gives $4215$, and, after
normalization, $3214$. 

The tree that generates \incm-avoiding \invs\  is similar to the tree
generating \incm-avoiding \ps.  Its description
involves $m$ catalytic parameters ({Gui\-bert}~\cite[Prop.~4.52]{guibert-these}).
The tree that generates \decm-avoiding \invs\ requires 
$\lfloor m/2\rfloor$ catalytic parameters only
(Jaggard \& Marincel~\cite{jaggard-marincel}). The source of this
compactness is easy to understand:  an
\inv\ $\tau$ contains the pattern $k\cdots 21$ if and only 
if it contains a  \emm symmetric, occurrence of this pattern (by this,
we mean that the corresponding set of
points in the diagram of $\tau$  is symmetric with respect to the
first diagonal, see Figure~\ref{fig:ex-123456-i}).
Equivalently, this means that a decreasing subsequence of length
$\lceil k/2\rceil$ occurs 
in the points of the diagram  lying on or above the first
diagonal. Thus we only need to keep track of descending subsequences
of length at most $m/2$ (in the top part of the diagram), and we can
expect to have about $m/2$ catalytic parameters. 

\begin{figure}[ht!]
\begin{center} 
\input{ex-123456-i.pstex_t}
\caption{The \inv\ $\tau = 3\ 2\ 1\ 12\  7\  9\  5\ 8\ 6\  11\  10\  4 \ \in
  \In_{12}^{(5)}$. One has $a_1(\tau)=3$, $a_2(\tau)=9$. There are $9$
  admissible ways to insert a $2$-cycle.}
\label{fig:ex-123456-i} 
\end{center}
\end{figure}

Let us now describe in details the tree generating \decm-avoiding \invs. The
example of Figure~\ref{fig:ex-123456-i} illustrates the argument. 
Let $\tau$ be an \inv\ of $\In_n^{(m)}$. Inserting $n+1$ as a fixed
point in $\tau$ always gives an involution of  $\In^{(m)}$. For
$1\le i\le n+1$, let us now
consider the \p\ $\pi$ obtained by adding 1 to all values larger than
or equal to $i$, and inserting the 2-cycle
$(i , n+2)$. How
many of these insertions are admissible, that is, give an involution
of   $\In^{(m)}$? If $\tau$ avoids $(m-1)\cdots 21$, then all
insertions are admissible, including the most ``risky'' one,
corresponding to $i=1$. Otherwise,  the only admissible values of $i$
are $n+1, n, \ldots, n-a+2$, where $n-a+1$ is the 
position of the rightmost  symmetric occurrence of $(m-1)\cdots
21$. In other words, if we denote $m=2\ell +\eps$ with $\eps\in\{0,1\}$,
$$
n-a+1=\max\,\{ i_1: \exists\ i_1<i_2<\cdots <i_{\ell} \hbox{ s.t. }
\tau(i_1) >\cdots >\tau(i_\ell)\ge i_\ell+\eps\}.
$$
Again, in order to keep track  of this parameter recursively, we are
led to define, for $1\le j \le \ell$, the following $\ell$ catalytic
parameters: 
$$
a_j(\tau)=  
\left\{
\begin{array}{ll}
n+1, \hskip 76mm \hbox{if } \tau \hbox{ avoids } (2j-1+\eps)\cdots 21;
\\
%
n+1  -\max\,\{i_1: \exists\ i_1<i_2<\cdots <i_{j} \hbox{ s.t. }
\tau(i_1)> \cdots  > \tau(i_j)\ge i_j+\eps\},
\hskip 3mm \hbox{otherwise.}
\end{array}
\right.
$$
In particular, $a_\ell(\tau)$ is the parameter that was denoted $a$
above, and it is also the number of admissible insertions of a 2-cycle
in $\tau$. We call the sequence $L(\tau):=(a_1(\tau), \ldots,
a_\ell(\tau))$ the \emm label, of $\tau$. Note that $a_1(\tau)\le \cdots
\le a_\ell(\tau)$. The empty \p\ has label $(1,
\ldots, 1)$. 

We can now describe the labels
of the children of $\tau$ in terms of $L(\tau)$.

\begin{Proposition}[Jaggard \& Marincel~\cite{jaggard-marincel}]
\label{prop:inv-tree}
Let $\tau $ be an \inv\ in $\In^{(m)}$ with  $L(\tau)=(a_1, \ldots,
a_\ell)$. Denote $a_0=0$. The labels of the $a_\ell$ \invs\ of $\In^{(m)}$ obtained by inserting a cycle in $\tau$ are
$$
\left\{
\begin{array}{{ll}}
(a_1+1, a_2+1, \ldots, a_\ell+1),  & \hbox{ if } m \hbox { is odd; }
\\
(1, a_2+1, \ldots, a_\ell+1),  & \hbox{ if } m \hbox { is even; }
\\
(a_1+1, \ldots, a_{j-1}+1, \al , a_{j+1}+2, \ldots,  a_\ell+2)
 & \hbox{ for } 1\le j \le \ell \hbox{ and } a_{j-1}+2 \le \al \le a_{j}+1.
\end{array}\right.
$$
The first two labels correspond to the insertion of a fixed point, the
other ones to the insertion of a $2$-cycle.
\end{Proposition}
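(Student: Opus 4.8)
The plan is to translate the label into the ``geometric'' positions $p_j(\tau):=|\tau|+1-a_j(\tau)$ for $0\le j\le\ell$ (with $a_0=0$, so $p_0(\tau)=|\tau|+1$), to prove a single lemma describing how the $p_j$ transform under the two kinds of insertion, and then to read off the Proposition by arithmetic. By the symmetric-occurrence principle recalled before the statement, $p_j(\tau)$ is the position $i_1$ of the rightmost decreasing subsequence $\tau(i_1)>\cdots>\tau(i_j)$ with $\tau(i_j)\ge i_j+\eps$ (a \emph{diagonal $j$-chain}: all of its points then lie weakly above the first diagonal, strictly if $\eps=1$, since along a decreasing subsequence the inequality at the foot propagates to every point), with $p_j(\tau)=0$ when there is none; moreover an involution $\sigma$ lies in $\In^{(m)}$ if and only if it has no diagonal $(\ell+1)$-chain. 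I would first record this dictionary and the monotonicity $p_0\ge p_1\ge\cdots\ge p_\ell$ (deleting the first point of a diagonal chain gives a shorter one starting further to the right).

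The heart of the argument is an \emph{insertion lemma} for a child $\pi$ of $\tau\in\In_n^{(m)}$. For the fixed-point child $\pi(n+1)=n+1$ the only new point is $(n+1,n+1)$, which can appear in a diagonal chain only as its first element, hence only in a chain of length one, and only when $\eps=0$; thus $p_1(\pi)=n+1$ and $p_j(\pi)=p_j(\tau)$ for $j\ge2$ when $\eps=0$, while $p_j(\pi)=p_j(\tau)$ for all $j$ when $\eps=1$ --- giving the label $(1,a_2+1,\ldots,a_\ell+1)$ for even $m$ and $(a_1+1,\ldots,a_\ell+1)$ for odd $m$. For the $2$-cycle child attached to $i\in\{1,\ldots,n+1\}$ --- shift every position and value $\ge i$ up by one, then add the points $(i,n+2)$ and $(n+2,i)$ --- I would show: (a) $(n+2,i)$ never lies in a diagonal chain, since it could only be the last point, forcing $i\ge n+2+\eps$; (b) writing $\mathrm{sh}_i(k)=k+[k\ge i]$, one has $\pi\circ\mathrm{sh}_i=\mathrm{sh}_i\circ\tau$ on $\{1,\ldots,n\}$, and $\mathrm{sh}_i$ carries diagonal $j$-chains of $\tau$ bijectively onto the diagonal $j$-chains of $\pi$ that avoid the new points (the foot condition being preserved because positions and values move in lockstep); (c) a diagonal $j$-chain of $\pi$ using $(i,n+2)$ is exactly $(i,n+2)$ prepended to the $\mathrm{sh}_i$-image of a diagonal $(j-1)$-chain of $\tau$ all of whose positions are $\ge i$, so such a chain exists iff $p_{j-1}(\tau)\ge i$, and then starts at $i$. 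Together these give
$$
p_j(\pi)=\max\bigl(\,\mathrm{sh}_i(p_j(\tau)),\;q_j\,\bigr),\qquad q_j:=i\ \ \text{if}\ \ p_{j-1}(\tau)\ge i,\ \ \text{else}\ \ 0,
$$
with the convention that the value $0$ records the absence of a chain.

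The rest is bookkeeping. Applying this formula with $j=\ell+1$ --- where $p_{\ell+1}(\tau)=0$ because $\tau\in\In^{(m)}$ --- shows $\pi$ acquires a diagonal $(\ell+1)$-chain exactly when $p_\ell(\tau)\ge i$; since any longer diagonal chain contains one of length $\ell+1$, the admissible $2$-cycle positions are precisely $i\in\{n-a_\ell+2,\ldots,n+1\}$. Write such an $i$ as $i=n+3-\al$, so $\al$ ranges over $\{2,\ldots,a_\ell+1\}$, and let $j_0$ be the unique index with $a_{j_0-1}+2\le\al\le a_{j_0}+1$ (these intervals partition $\{2,\ldots,a_\ell+1\}$, as $0=a_0\le a_1\le\cdots\le a_\ell$). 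Since $p_j(\tau)\ge i$ is equivalent to $\al\ge a_j+2$, the formula yields: for $j<j_0$, $p_j(\pi)=p_j(\tau)+1$, hence $a_j(\pi)=a_j+1$; for $j=j_0$, $p_{j_0}(\tau)<i\le p_{j_0-1}(\tau)$, hence $p_{j_0}(\pi)=i$ and $a_{j_0}(\pi)=\al$; and for $j>j_0$, $p_{j-1}(\tau)<i$ and $p_j(\tau)<i$, hence $p_j(\pi)=p_j(\tau)$ and $a_j(\pi)=a_j+2$. This is precisely the third family of labels (with $j=j_0$), and with the fixed-point child it accounts for the $a_\ell$ admissible $2$-cycle insertions together with the one fixed-point insertion.

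The one genuinely delicate step is the insertion lemma, parts (b) and (c): one must check carefully that the simultaneous upward shift of positions and values neither creates nor destroys the excedance condition $\tau(i_j)\ge i_j+\eps$ at the foot of a chain, distinguishing whether the final position and the final value lie below or above the threshold $i$. Everything after that --- the admissibility range, the substitution $i=n+3-\al$, and the three cases for $j$ --- is routine arithmetic with the $p_j$, and the symmetric-occurrence principle (which is what motivates the very definition of the $a_j$) is used as a black box.
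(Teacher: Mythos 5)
The paper offers no proof of this proposition: it is attributed to Jaggard and Marincel, and the text explicitly says ``We do not give the proofs, as this was done by Guibert and Jaggard \& Marincel, respectively.'' So there is nothing in the paper to compare your argument against line by line; what you have written is a self-contained proof where the paper only cites.

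That said, your argument is correct and complete at the level of rigour the paper itself adopts (it, too, treats the symmetric-occurrence principle as known). I checked the three pieces. The dictionary $p_j=|\tau|+1-a_j$ and the monotonicity $p_0\ge\cdots\ge p_\ell$ are right. In the insertion lemma, the point you flag as delicate does go through: since $\mathrm{sh}_i$ is strictly increasing and $\eps\in\{0,1\}$, the foot condition $\tau(i_j)\ge i_j+\eps$ is equivalent to $\tau(i_j)\ge i_j$ or $\tau(i_j)>i_j$, and both relations are preserved exactly by $\mathrm{sh}_i$; likewise $(n+2,i)$ can only be the foot of a decreasing chain and then violates $i\ge n+2+\eps$, and $(i,n+2)$ can only be the head, so the decomposition $p_j(\pi)=\max(\mathrm{sh}_i(p_j(\tau)),q_j)$ with $q_j=i$ iff $p_{j-1}(\tau)\ge i$ is exact. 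The final bookkeeping ($\al\ge a_j+2 \Leftrightarrow p_j(\tau)\ge i$, the three regimes $j<j_0$, $j=j_0$, $j>j_0$, and the admissibility range $i\ge n-a_\ell+2$) reproduces the stated labels, including the $j=1$ boundary case via $p_0=n+1$. One cosmetic remark: the proposition's phrase ``the $a_\ell$ involutions obtained by inserting a cycle'' undercounts by one (there are $a_\ell$ two-cycle children plus one fixed-point child, as the displayed list and the functional equation confirm); your count agrees with the displayed list, not with that phrase.
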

\noindent
We refer again the reader to Figure~\ref{fig:ex-123456-i} for an
example.

\medskip
Let us now translate the recursive construction of \invs\ of $\In^{(m)}$
 in terms of \gfs. Let $\Ga(u_1, \ldots,
u_\ell;t)$ be the (ordinary) \gf\ of \invs\  of $\In^{(m)}$, counted
by the statistics $a_1, \ldots, a_\ell$ and by the length:
\begin{eqnarray*}
  \Ga(u_1, \ldots,u_\ell;t)&=&\sum_{\tau \in \In^{(m)}} u_1^{a_1(\tau)}
\cdots u_\ell^{a_\ell(\tau)}t^{|\tau|}
\\
&=&
\sum_{a_1, \ldots, a_\ell} \Ga_{a_1, \ldots, a_\ell} (t) u_1^{a_1}\cdots u_\ell^{a_\ell}
\end{eqnarray*}
where $\Ga_{a_1, \ldots, a_\ell} (t)$ is the length \gf\ of \ps \ of
$\In^{(m)}$ having label $(a_1, \ldots, a_\ell)$.
We still denote $a_0=0$. The above proposition gives
\begin{multline*}
  \Ga(u_1, \ldots,u_\ell;t)= u_1\cdots u_\ell + 
tu_1\cdots u_\ell \Ga(u_1, \ldots,u_\ell;t)\chi_{m\equiv {1}}
+tu_1\cdots u_\ell \Ga(1,u_2, \ldots,u_\ell;t)\chi_{m\equiv 0}
\\
+t^2\, \sum_{a_1, \ldots, a_\ell} \Ga_{a_1, \ldots, a_\ell} (t) 
%
%
\sum_{j=1}^{\ell}
\sum_{\al=a_{j-1}+2}^{a_{j}+1}
 u_1^{a_1+1}\cdots u_{j-1}^{a_{j-1}+1} u_{j}^\al u_{j+1}^{a_{j+1}+2}
\cdots u_\ell^{a_\ell+2},
\end{multline*}
where $\chi_{m\equiv {i}}$ equals $1$ if $m$  equals $i$ modulo 2, and 0
otherwise.
Using
$$
\sum_{\al=a_{j-1}+2}^{a_{j}+1}u_{j}^\al=
\frac{u_{j}^{a_{j}+2}-u_{j}^{a_{j-1}+2} }
{u_j-1},
$$
we finally obtain (given that $a_0=0$):
\begin{multline}\label{eq-func-i-u}
\Ga(u;t)= u_ {1,\ell}+  t u_ {1,\ell}\Ga(u;t)\chi_{m\equiv {1}}
+  t u_ {1,\ell}\Ga(1,u_2, \ldots,u_\ell;t)\chi_{m\equiv {0}}
 \\
+ t^2u_ {1,\ell}\sum_{j=1}^{\ell}
u_{j,\ell} \frac{\Ga(u;t)-\Ga(u_1, \ldots, u_{j-2}, u_{j-1}u_j,1, u_{j+1},
    \ldots,  u_\ell;t)}{u_j-1}
\end{multline}
where $\Ga(u;t) \equiv \Ga(u_1, \ldots,u_\ell;t)$ and $u_{j,k}=u_j
u_{j+1} \cdots u_k$. 

To finish, let us perform an elementary transformation on the series $\Ga(u;t)$.
 Define
\beq\label{weight-i}
\Gc(v;t)=\Gc(v_1, \ldots, v_\ell;t)= \sum_{\tau\in \In^{(m)}}
v_1^{a_1} v_2^{a_2-a_1} \cdots v_\ell^{a_\ell-a_{\ell-1}} t^{|\tau|},
\eeq
where $(a_1, \ldots, a_\ell)=\ell(\tau)$.
We have eliminated the dependence $a_1\le \cdots \le a_\ell$ between the
exponents of $u_1, \ldots, u_\ell$ in $\Ga(u;t)$. The series $\Ga$ and $\Gc$ are  related by
$$
\Gc(v_1, \ldots, v_\ell;t)= 
\Ga\left( \frac{v_1}{v_2},\ldots, \frac{v_{\ell-1}}{v_\ell},v_\ell;t\right),
$$
and conversely
$$
\Ga(u_1, \ldots, u_\ell; t)=  \Gc(u_{1,\ell}, u_{2,\ell},
\ldots, u_\ell; t)
$$
where as above $u_{j,k}=u_j
u_{j+1} \cdots u_k$. The functional equation~\eqref{eq-func-i-u}
satisfied by $\Ga(u;t)$ translates as follows.
\begin{Proposition}\label{prop:eq-f-i}
  The \gf\ $\Gc(v;t)\equiv \Gc(v_1, \ldots, v_\ell;t)$ of \invs\ of
  $\In^{(m)}$, defined by~\eqref{weight-i}, satisfies
  \begin{multline*}
\Gc(v;t)= v_1+ tv_1 \Gc(v;t)\chi_{m\equiv {1}}
+ tv_1 \Gc(v_2,v_2,v_3, \ldots, v_\ell;t) \chi_{m\equiv {0}}
\\
+t^2 v_1 \sum_{j=1}^{\ell} v_{j}v_{j+1}\ 
\frac{\Gc(v;t)-\Gc(v_1, \ldots, v_{j-1}, v_{j+1}, v_{j+1}, v_{j+2}, \ldots,
  v_\ell;t)}{v_{j}-v_{j+1}}.
\end{multline*}
The series $\Gc(1, \ldots, 1;t)$ counts \invs\ of   $\In^{(m)}$ by their length.
\end{Proposition}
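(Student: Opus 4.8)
The plan is to deduce Proposition~\ref{prop:eq-f-i} from the functional equation~\eqref{eq-func-i-u} for $\Ga(u;t)$ by performing the change of variables
\[
u_j=v_j/v_{j+1}\quad(1\le j\le \ell-1),\qquad u_\ell=v_\ell,
\]
i.e.\ $\Gc(v;t)=\Ga(v_1/v_2,\ldots,v_{\ell-1}/v_\ell,v_\ell;t)$. First I would check that this substitution makes sense at the level of formal power series: for each fixed power of $t$ the corresponding coefficient of $\Ga$ is a polynomial in $u_1,\ldots,u_\ell$ (because $1\le a_1\le\cdots\le a_\ell\le|\tau|+1$), hence becomes a Laurent polynomial in $v_1,\ldots,v_\ell$ after the substitution; so $\Gc$ is a well-defined power series in $t$ with Laurent-polynomial coefficients, and~\eqref{eq-func-i-u} survives the substitution (its ``quotient'' terms come from the finite geometric sums $\sum_\al u_j^\al$ and stay Laurent-polynomial). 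Comparing the transformed weight $u_1^{a_1}\cdots u_\ell^{a_\ell}\mapsto v_1^{a_1}v_2^{a_2-a_1}\cdots v_\ell^{a_\ell-a_{\ell-1}}$ with~\eqref{weight-i} confirms that this transformed series really is the $\Gc(v;t)$ of the proposition, and that the inverse substitution reads $\Ga(w_1,\ldots,w_\ell;t)=\Gc(w_{1,\ell},w_{2,\ell},\ldots,w_\ell;t)$, where $w_{j,k}:=w_jw_{j+1}\cdots w_k$.

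Next I would transform~\eqref{eq-func-i-u} term by term. The basic simplification is that $u_{j,\ell}=u_ju_{j+1}\cdots u_\ell$ telescopes to $v_j$; in particular $u_{1,\ell}=v_1$, so the term $u_{1,\ell}$ becomes $v_1$ and $t\,u_{1,\ell}\Ga(u;t)$ becomes $t\,v_1\Gc(v;t)$. Using the inverse substitution one gets $\Ga(1,u_2,\ldots,u_\ell;t)=\Gc(u_{2,\ell},u_{2,\ell},u_{3,\ell},\ldots,u_\ell;t)=\Gc(v_2,v_2,v_3,\ldots,v_\ell;t)$, which produces the $\chi_{m\equiv0}$ term. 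For the sum over $j$, the crucial point is that the ``merged'' argument of $\Ga$ becomes a ``doubled'' argument of $\Gc$: replacing the pair $u_{j-1},u_j$ by the single entry $u_{j-1}u_j$ and setting the next entry to $1$ leaves every partial product $w_{k,\ell}$ unchanged for $k\le j-1$ and makes $w_{j,\ell}=u_{j+1,\ell}$, so
\[
\Ga(u_1,\ldots,u_{j-2},u_{j-1}u_j,1,u_{j+1},\ldots,u_\ell;t)=\Gc(v_1,\ldots,v_{j-1},v_{j+1},v_{j+1},v_{j+2},\ldots,v_\ell;t);
\]
since moreover $u_j-1=(v_j-v_{j+1})/v_{j+1}$, the coefficient $t^2u_{1,\ell}u_{j,\ell}/(u_j-1)$ becomes $t^2v_1v_jv_{j+1}/(v_j-v_{j+1})$. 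Assembling these identities turns~\eqref{eq-func-i-u} into exactly the equation of Proposition~\ref{prop:eq-f-i}. The last assertion of the proposition is immediate: putting $v_1=\cdots=v_\ell=1$ in~\eqref{weight-i} kills all catalytic exponents, leaving $\sum_{\tau\in\In^{(m)}}t^{|\tau|}$.

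No step is genuinely hard; what requires care — and where I expect to spend most of the effort — is the boundary indices $j=1$ and $j=\ell$, where the general formulas degenerate: for $j=1$ there is no entry $u_0$, so ``$u_{j-1}u_j$'' is vacuous and (consistently with the convention $a_0=0$ of~\eqref{aj-def}) the argument must be read as $\Ga(1,u_2,\ldots,u_\ell;t)$, which maps to $\Gc(v_2,v_2,v_3,\ldots,v_\ell;t)$; while for $j=\ell$ there is no entry after the merge, so $\Ga(u_1,\ldots,u_{\ell-2},u_{\ell-1}u_\ell,1;t)$ maps to $\Gc(v_1,\ldots,v_{\ell-1},1;t)$, which is what the displayed sum means when read with $v_{\ell+1}=1$ and with the understanding that there is no coordinate left to double against. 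I would check these two cases by hand. As an independent check — and in fact a fully self-contained alternative — one may avoid $\Ga$ altogether and read the equation off directly from Proposition~\ref{prop:inv-tree}: write the exponent of $v_j$ in~\eqref{weight-i} as the increment $b_j:=a_j-a_{j-1}$, so that $\tau$ gets weight $v_1^{b_1}\cdots v_\ell^{b_\ell}t^{|\tau|}$ with $b_1\ge1$, $b_j\ge0$, and the empty permutation (label $(1,\ldots,1)$, i.e.\ $b=(1,0,\ldots,0)$) gets weight $v_1$; then the fixed-point children of $\tau$ contribute $t\,v_1\bigl(\chi_{m\equiv1}\,\Gc(v;t)+\chi_{m\equiv0}\,\Gc(v_2,v_2,v_3,\ldots,v_\ell;t)\bigr)$, and the $2$-cycle children, after evaluating the finite geometric sum $\sum_\al v_j^{\,\al-a_{j-1}-1}$ over $a_{j-1}+2\le\al\le a_j+1$, contribute $t^2v_1\sum_{j=1}^{\ell}v_jv_{j+1}\bigl(\Gc(v;t)-\Gc(v_1,\ldots,v_{j-1},v_{j+1},v_{j+1},v_{j+2},\ldots,v_\ell;t)\bigr)/(v_j-v_{j+1})$.
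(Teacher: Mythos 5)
Your proposal is correct and follows essentially the same route as the paper: the paper derives equation~\eqref{eq-func-i-u} from the generating-tree description of Proposition~\ref{prop:inv-tree} and then states that the substitution $\Ga(u_1,\ldots,u_\ell;t)=\Gc(u_{1,\ell},\ldots,u_\ell;t)$ ``translates'' it into Proposition~\ref{prop:eq-f-i}, and your term-by-term computation (including the telescoping $u_{j,\ell}\mapsto v_j$, the identity $u_j-1=(v_j-v_{j+1})/v_{j+1}$, and the boundary cases $j=1$, $j=\ell$ with $v_{\ell+1}=1$) supplies exactly the verification the paper leaves implicit. Your alternative direct derivation from the tree is just the paper's two steps merged into one, so it is not a genuinely different argument.
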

In Section~\ref{sec:inv}, we derive from this equation the exponential
\gf\ of \invs\ of   $\In^{(m)}$, as given by Theorem~\ref{thm:i}.
We then refine the result to take into account the number of fixed points.

\section{Two examples}
\label{sec:ex}
In this section, we illustrate the ingredients of our solution of the
equations of Propositions~\ref{prop:eq-p} and~\ref{prop:eq-f-i} by
taking two examples. The first one deals with the enumeration of
123-avoiding \ps. The second one is a \gf\ proof of MacMahon's formula
for the number of standard tableaux of a given shape,
and should clarify what we meant in the introduction by ``the reflection
principle performed at the level of power series''.

\subsection{Permutations  avoiding $123$}
In the introduction, we wrote the following equation for the
bivariate \gf\ of 123-avoiding \ps, counted by the position of the
first ascent and the length:
$$
\left(1-t\, \frac{u^2}{u-1}\right) F(u;t)= 1-t\, \frac {u}{u-1} F(1;t).
$$
This is the case $m=2$ of Proposition~\ref{prop:eq-p}, with $v_1=u$
and $v_2=1$. 

As explained in Section~1, this equation can be solved by an
appropriate choice of $u$ that cancels the kernel, and thus eliminates the
unknown series $F(u;t)$. This is the
standard kernel method. We present here
an alternative solution, sometimes called the \emm algebraic, kernel
method~\cite{Bous05,mbm-mishna}, where instead $F(1;t)$ is
eliminated. This elimination is obtained by 
exploiting a certain symmetry of the kernel.
This symmetry appears clearly if we set $u=1+x$. The equation then reads:
$$
\left( 1-t\, {(1+x)(1+\bx)}  \right) F(1+x;t)= 1-t(1+\bx) F(1;t)
$$
with $\bx=1/x$. {The kernel is now invariant under $x\mapsto \bx$.}
Replace $x$ by $\bx$:
$$
\left( 1-t\, {(1+x)(1+\bx)}\right) F(1+\bx;t)= 1-t(1+x) F(1;t).
$$
{We  now eliminate $F(1;t)$} by  taking a linear combination of these two
equations. This leaves:
\beq\label{os-simple}
\left( 1-t\, {(1+x)(1+\bx)}\right) \left( F(1+x;t)-\bx F(1+\bx;t)\right)
= 1-\bx,
\eeq
or
$$
{F(1+x;t)-\bx F(1+\bx;t)= \frac{ 1-\bx}{ 1-t {(1+x)(1+\bx)} }}:=R(x;t).
$$
In this equation,
\begin{itemize}
\item[--] $F(1+x;t)$ is a series in $t$ with coefficients in $\qs[x]$,
\item[--] $\bx F(1+\bx;t)$ is a series in $t$ with coefficients in $\bx \qs[\bx]$,
\item[--]  the right-hand side  $R(x;t)$ is a series in $t$ with
  coefficients in $ \qs[x,\bx]$.
\end{itemize}
Consequently, $F(1+x;t)$ is the non-negative part of $R(x;t)$ in  $x$.
In particular, the length \gf\ of 123-avoiding \ps\ is
\begin{eqnarray}
F(1;t)= [x^0] R(x;t)&=& \sum_{n\ge 0} [x^0] ( 1-\bx)\bx^n (1+x)^{2n}
t^n
\label{extr}\\ 
&=& \sum_{n\ge 0} \left( {{2n}\choose n} -{{2n}\choose {n+1}}\right)
t^n \nonumber\\
&=& \sum_{n\ge 0} \frac{t^n}{n+1}  {{2n}\choose n}.\nonumber
\end{eqnarray}
This small example contains all ingredients of what will be our solution for
a generic value of $m$: 
\begin{itemize}
\item[--] a change of variables, which may not have a clear combinatorial
  meaning,
\item[--] a finite group $G$ acting on power series that leaves the kernel
  unchanged (here, the group has order 2, and replaces $x$ by $1/x$),
\item[--] a linear combination~\eqref{os-simple} of all the equations obtained by letting an
  element of $G$ act on the original functional equation; in this
  linear combination, called the \emm orbit sum,, the left-hand side
  is a multiple of the kernel, and the right-hand side does not
  contain any unknown series,
\item[--] finally, a coefficient extraction~\eqref{extr} that gives the \gf\ under interest.
\end{itemize}
Let us mention, however, that for a generic value of $m$, the change of variables used in
Section~\ref{sec:perm} is not a direct extension  of $v\mapsto
1+x$. But, on this small example, the latter choice is simpler.

\subsection{Standard Young tableaux}
Let $\la=(\la_1, \ldots, \la_m)\in \ns^m$ be an integer partition. That is,
$\la_1\ge \cdots \ge \la_m\ge 0$. The \emm weight, of $\la$ is
$|\la|:=\la_1+ \cdots+ \la_m$.
We identify $\la$ with its Ferrers shape, in
which the $i^{\hbox{\small th}}$ row has $\la_i$ cells. A \emm standard
tableau, of shape $\la$ is a filling of the cells of $\la$ with the
integers $1, 2, \ldots, |\la|$, that increases along rows and
columns (Figure~\ref{fig:young}). The \emm height, of the tableau is  the number of non-empty rows, that is $\max(i : \la_i>0)$. 
Let $f^\la$ denote the number of standard Young
tableaux of shape $\la$.

\begin{figure}[ht!]
\begin{center} 
{\includegraphics{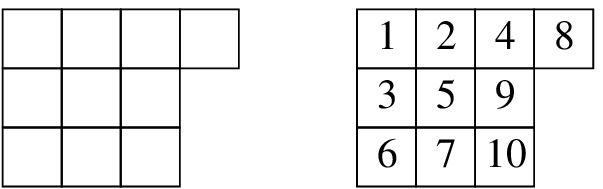}}
\caption{The Ferrers shape associated with the partition $\la=(4,3,3)$
  and a standard tableau of shape $\la$.}
\label{fig:young}
\end{center}
\end{figure}

Our objective here is to recover the hook-length formula, or, rather,
an equivalent form due to MacMahon~\cite[Sec.~III, Chap.~V]{macmahon}.
\begin{Proposition}
  Let $\la=(\la_1, \ldots, \la_m)$ be a partition of weight $n$. 
 The number of standard Young tableaux of shape $\la$ is
$$
f^\la = \frac{n!}{\prod_{i=1}^m (\la_i-i+m)!}\ \prod_{1\le i<j\le m}
(\la_i-\la_j-i+j).
$$
\end{Proposition}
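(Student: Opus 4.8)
The plan is to mirror, in the setting of standard Young tableaux, exactly the four-step recipe isolated in the $123$-avoiding example: set up a functional equation with catalytic variables coming from a recursive (generating-tree) construction, find the symmetry group of its kernel, form the orbit sum to kill all unknown series, and finish by a coefficient extraction. Concretely, I would encode a standard tableau of shape $\la=(\la_1,\ldots,\la_m)$ by recording, for each row $i$, the position $\la_i$ of its last cell, and build the tableau by inserting the entries $1,2,\ldots,n$ one at a time: the entry $k+1$ must be placed at the end of some row $i$ such that the resulting shape is still a partition, i.e.\ such that $\la_i<\la_{i-1}$ (with $\la_0=+\infty$). This is precisely the kind of recursion in which the catalytic parameters are the row lengths $\la_1,\ldots,\la_m$; introducing the generating function $F(v_1,\ldots,v_m;t)=\sum_\la f^\la\, v_1^{\la_1}\cdots v_m^{\la_m}\, t^{|\la|}/|\la|!$ (or an ordinary version thereof, with the $n!$ absorbed afterwards), the insertion rule translates into a linear functional equation whose kernel is a simple symmetric expression — after the appropriate change of variables $v_i\mapsto$ (something like) $x_i$ with $x_1\cdots x_m$ or $\prod(x_i-x_j)$ appearing, the kernel will be invariant under the full symmetric group $\Sn_m$ permuting the $x_i$, up to sign.

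The heart of the argument is then the orbit sum. Acting on the functional equation by each of the $m!$ permutations of the catalytic variables and taking the alternating sum, every term carrying an unknown specialized series cancels by antisymmetry (each such term, after symmetrization, produces a determinant with two equal rows), and one is left with an identity of the form $(\text{kernel})\cdot\big(\text{antisymmetrization of }F\big)=\text{explicit antisymmetric rational function}$. Solving for the antisymmetrization of $F$ gives a closed rational expression; extracting the coefficient of the appropriate monomial $x_1^{\la_1}\cdots x_m^{\la_m}$ — which amounts to a Vandermonde-type determinant evaluation, exactly as $[x^0](1-\bar x)\bar x^n(1+x)^{2n}$ was evaluated in \eqref{extr} — yields $f^\la$ as a ratio of a factorial to a product, and recognizing $\prod_{1\le i<j\le m}(\la_i-\la_j-i+j)$ as the Vandermonde in the shifted variables $\mu_i:=\la_i-i+m$ produces MacMahon's formula $f^\la=n!\,\prod_{i<j}(\mu_i-\mu_j)\big/\prod_i \mu_i!$.

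The step I expect to be the main obstacle is, as in the body of the paper, the coefficient extraction rather than the orbit sum itself: one must pass from the antisymmetrized rational function back to an honest power-series coefficient, which requires being careful about \emph{which} Laurent expansion of the kernel one is using (the analogue of the bookkeeping "$F(1+x;t)$ is the nonnegative part of $R(x;t)$" in the example), and then carrying out the resulting alternating sum over $\Sn_m$, i.e.\ a determinant evaluation. Two minor technical points also need attention: handling the "boundary" row ($\la_0=\infty$, equivalently the unbounded first row) so that the functional equation is genuinely of the Proposition~\ref{prop:eq-p} type with finitely many catalytic variables, and making sure partitions with trailing zeros (fewer than $m$ nonempty rows) are counted consistently so that the final formula holds for all $\la\in\ns^m$. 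Once the kernel's symmetry is pinned down, everything else is the routine — but delicate — linear algebra that the paper promises this toy example will illustrate.
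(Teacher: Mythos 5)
Your plan coincides with the paper's own proof: the same cell-by-cell insertion recursion with the row lengths as catalytic variables, the kernel $1-\sum_j u_j$ invariant under $\Sn_m$ (no change of variables is in fact needed here), the alternating orbit sum after multiplying by $M(u)=u_1^{m-1}\cdots u_m^0$ to kill the diagonal terms $F_j$, and a final coefficient extraction evaluated as a Vandermonde determinant in the shifted variables $\la_i-i$. The only point you leave implicit --- and it is the paper's key observation at the extraction step --- is that every monomial of $M(u)F(u)$ has strictly decreasing exponents, so only $\si=\id$ contributes to the coefficient of $u_1^{m-1+\la_1}\cdots u_m^{0+\la_m}$ on the left-hand side; your worry about which Laurent expansion of $1/K(u)$ to use is moot in this example, since everything is an honest power series in the $u_j$.
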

\begin{proof}
  Let $F(u)\equiv F(u_1, \ldots, u_m)$ be the \gf\ of standard
  tableaux of height at most $m$:
$$
F(u):= \sum_{\la_1\ge  \cdots\ge  \la_m\ge 0} f ^{\la} \prod_{i=1}^m
u_i^{\la_i}.
$$
For $j=2, \ldots, m$,  we denote by  $F_j(u_1, \ldots,
u_{j-2}, u_{j-1}u_j, u_{j+1}, \ldots, u_m)\equiv F_j(u)$ the \gf\ of standard
tableaux such that the parts $\la_{j-1}$ and $\la_{j}$ are equal. This series is
obtained by 
extracting the corresponding terms from $F(u)$ (it is also called the
$(j-1,j)$-diagonal of $F(u)$). In all terms of this series, $u_{j-1}$
and $u_j$ appear with the same exponent, which allows us to
write this series in the above form.

Now a tableau of weight $n+1$ is obtained by adding a cell labelled
$n+1$ to a tableau of weight $n$. This cell can be added to the $j^{\hbox{\small
    th}}$  row
   unless this row should have the same length as the $(j-1)^{\hbox{\small
    st}}$ row. This gives directly
the following equation:
$$
F(u)=1+ u_1 F(u)+ \sum_{j=2}^m u_j\left( F(u)-F_j(u)\right),
$$
that is,
$$
\Big(1- \sum_{j=1}^m u_j\Big) F(u)
= 1- \sum_{j=2}^m u_jF_j(u).
$$
Observe that the kernel $K(u):= 1-\sum u_j$ is  invariant under the
action of the symmetric group
$\Sn_m$, seen as a group of transformations of polynomials in $u_1,
\ldots, u_m$. This group is
generated by $m-1$ elements of order 2, denoted $\si_1, \ldots,
\si_{m-1}$:
$$
\si_j(P(u_1,  \ldots, u_m))= P(u_1,\ldots, u_{j-1},  u_{j+1},u_j,
u_{j+2}, \ldots, u_m).
$$
Let us multiply the equation by $M(u):= u_1^{m-1} \cdots u_{m-1}^1
u_m^0$. This gives:
\beq\label{young-eq}
K(u) M(u) F(u)=M(u)
-\sum_{j=2}^m u_1^{m-1} \cdots u_{j-1}^{m-(j-1)}u_j^{m-j+1}  \cdots
u_m^0 F_j(u).
\eeq
Recall that $F_j(u)$ stands for $F_j(u_1, \ldots,
u_{j-2}, u_{j-1}u_j, u_{j+1}, \ldots, u_m)$. Hence the $j^{\hbox{\small
    th}}$ term in the above sum is invariant under the action of the generator $\si_{j-1}$ (which
exchanges $u_{j-1}$ and $u_j$). Consequently, forming the signed sum
of~\eqref{young-eq} over the symmetric group $\Sn_m$ gives the
following \emm orbit sum,, which does not involve the series $F_j$:
$$
\sum_{\si \in \Sn_m} \vareps(\si)\ \si\!\left(K(u) M(u) F(u)\right)
=\sum_{\si \in \Sn_m} \vareps(\si)\ \si\!\left( M(u)\right),
$$
or, given that $K(u)$ is $\Sn_m$-invariant,
\beq\label{os-young}
\sum_{\si \in \Sn_m} \vareps(\si)\ \si\!\left(M(u) F(u)\right)
=\frac{\sum_{\si \in \Sn_m} \vareps(\si)\ \si\!\left( M(u)\right)}{K(u)}.
\eeq
Of course, the sum on the right-hand side can be evaluated explicitly
(the numerator is the Vandermonde determinant), but this will not be
needed here.

\smallskip
We claim that the number $f^\la$ can be simply obtained by a
coefficient extraction in the above identity. Consider the series
$M(u)F(u)$. Each monomial $u_1^{a_1} \cdots u_m^{a_m}$ that occurs in
it satisfies $a_1 >\cdots >a_m$ (because $a_i=m-i+\la_i$, where $\la$
is a partition). Consequently, if $\si$ is not the identity, the
exponents of any  monomial $u_1^{a_1} \cdots u_m^{a_m}$  occurring in
$\si\left(M(u) F(u)\right)$ are totally ordered \emm in a different
way,. Hence, when we extract the coefficient of  $u_1^{m-1+\la_1}
\cdots u_m^{0+\la_m}$ from~\eqref{os-young}, only the term
corresponding to $\si=\id$ contributes in the left-hand side, so that
$$
f^{\la}= [u_1^{m-1+\la_1}\cdots u_m^{0+\la_m}]\ \frac{\sum_{\si \in
      \Sn_m} \vareps(\si)\ \si\left( M(u)\right)}{K(u)}.
$$
Given that  $M(u)= u_1^{m-1} \cdots u_{m-1}^1
u_m^0$ and 
$$
\frac 1{K(u)}=\frac 1 {1-\sum_{j=1}^m u_j}=\sum_{a_1, \ldots, a_m \ge
  0} \frac{(a_1+\cdots + a_m)!}{\prod_{i=1}^m a_i!}\ u_1^{a_1}\cdots u_m^{a_m},
$$
we obtain
\begin{eqnarray*}
f^{\la}&=& \sum_{\si \in    \Sn_m} \vareps(\si)\frac{(\la_1+\cdots +
  \la_m)!}
{\prod_{i=1}^m (\la_i-i+\si^{-1}(i))!}\\
&=&
n! \det \left( \frac 1  {(\la_i-i+j)!}\right)_{1\le i, j \le m}\\
&=& n! \det  \left( \frac {(\la_i-i+j+1) \cdots (\la_i-i+m)}
{(\la_i-i+m)!}\right)_{1\le i,j\le m}
\\
&=& \frac {n!}{ \prod_{i=1}^m {(\la_i-i+m)!}}\ 
  \det  \left(  (\la_i-i+j+1) \cdots (\la_i-i+m) \right)_{1\le i,j\le      m}
.
\end{eqnarray*}
The  $(i,j)$-coefficient of the latter  determinant is a polynomial in
$\la_i-i$ of degree  $m-j$ and leading coefficient $1$. Hence the
determinant is simply the Vandermonde determinant $\det( (\la_i-i)^{m-j})$,
  that is, $\prod_{i<j} (\la_i-\la_j-i+j)$. This completes the proof
  of the proposition.
\end{proof}
We recognize in this proof three of the four ingredients that were
used in the enumeration of 123-avoiding \ps: the finite group that
leaves the kernel invariant (here, $\Sn_m$), the orbit
sum~\eqref{os-young}, and the final coefficient 
extraction. In this example, the symmetries of the kernel are obvious
already with the original variables $u_i$, so that no change of variables is
required.

 This proof is the \gf\ counterpart
of the classical proof that encodes tableaux of height at most $m$  by
paths in $\ns^m$ formed of unit positive steps, that start from $(0,
\ldots, 0)$ and remain in the wedge $x_1\ge \cdots \ge x_m\ge 0$,
and then uses the reflection principle. It is also very close to
another proof due to Xin~\cite[Sec.~3.1]{xin}.

\section{Involutions with no long descending subsequence}
\label{sec:inv}

We now address the solution of the functional equation of
Proposition~\ref{prop:eq-f-i}, which defines the \gf\ of \invs\ avoiding
\decm. 

\subsection{Invariance properties of the kernel}
\label{sec:inv-i}
As discussed in the previous section, our objective is to
exploit invariance properties of the \emm kernel,, that is, the coefficient of
$\Gc(v;t)$. Let us first divide the equation of
Proposition~\ref{prop:eq-f-i} by $v_1$. Then the kernel reads
$$
\frac 1 {v_1}-t\chi_{m\equiv 1}-t^2\sum_{j=1}^\ell\frac{v_jv_{j+1}}{v_j-v_{j+1}}.
$$
The invariance properties of this rational function appear clearly
after performing the following change of variables:
\beq\label{vx}
v_i= \frac 1 {1-t(x_i+\cdots+ x_\ell)}.
\eeq
Indeed,  the kernel becomes
$$
K(x;t)=1-t(x_1+\cdots+ x_\ell)-t\chi_{m\equiv 1}-t(\bx_1+\cdots+
\bx_\ell),
$$
where $\bar x_i=1/x_i$, and is  invariant under the action of 
the hyperoctahedral group
$B_\ell$ (the group of signed permutations), seen as a group of
transformations on Laurent polynomials in  $x_1, \ldots, x_\ell$. This
group is generated by $\ell$ elements of order 2, denoted $\si_1,
\ldots, \si_\ell$:
$$
\si _j( P(x_1, \ldots, x_\ell)) = 
\left\{
\begin{array}{ll}
   P(\bx_1, x_2, \ldots, x_\ell), & \hbox{ if } j=1;
\\
P(x_1, \ldots, x_{j-2}, x_j ,x_{j-1},
x_{j+1}, \ldots, x_\ell),
&\hbox{ for } j\ge 2.
\end{array}\right.
$$
The equation of Proposition~\ref{prop:eq-f-i} now reads:
$$
K(x;t) \Gd(x;t)=1+ t \Gd(0,x_2, \ldots, x_\ell)\chi_{m\equiv 0}
 - t \sum_{j=1}^\ell
\bx_j \Gd(x_1, \ldots, x_{j-2},x_{j-1}+x_j, 0, x_{j+1}, \ldots,
x_\ell)
$$
where
$$
\Gd(x;t)\equiv \Gd(x_1, \ldots, x_\ell;t)=\Gc\left(\frac 1{1-t(x_1+\cdots+
  x_\ell)},  \frac 1{1-t(x_2+\cdots+ x_\ell)},\ldots, 
\frac 1{1-tx_\ell};t\right).
$$
\subsection{Orbit sum}
\label{sec:orbit-i}
\noindent We now handle separately the odd and even case.

\noindent $\bullet$ If $m$ is odd,
the equation reads 
$$
  K(x;t) \Gd(x;t)=1 - t \sum_{j=1}^\ell
\bx_j \Gd(x_1, \ldots, x_{j-2},x_{j-1}+x_j, 0, x_{j+1}, \ldots,
x_\ell;t)
$$
where
\beq\label{K-odd}
K(x;t)=1-t(1+x_1+\cdots+ x_\ell+\bx_1+\cdots+\bx_\ell).
\eeq
Let us multiply the equation by 
\beq\label{M-odd}
M(x):=x_1 x_2^2\cdots x_\ell^\ell.
\eeq
 This gives:
\begin{multline}\label{Gd-sym}
K(x;t) M(x)  \Gd(x;t)= M(x)\\
-  t \sum_{j=1}^\ell
x_1 \cdots x_ {j-1}^{j-1} x_j^{j-1} x_{j+1}^{j+1} \cdots x_\ell^\ell \Gd(x_1, \ldots, x_{j-2},x_{j-1}+x_j, 0, x_{j+1}, \ldots,
x_\ell;t).
\end{multline}
The first term ($j=1$) of the sum reads $x_2^2 \cdots x_\ell^\ell
\Gd(0,x_2\ldots, x_\ell)$ and is invariant under the action of the generator $\si_1$ of
$\B_\ell$ (which replaces $x_1$ by $\bx_1$).
For $j\ge 2$, the $j^{\hbox{\small th}}$ term of the sum is invariant
under the action of the
generator $\si_j$ (which exchanges $x_{j-1}$ and $x_j$).
Consequently, forming the signed sum of~\eqref{Gd-sym} over the
hyperoctahedral group $\B_\ell$ gives the following orbit sum:
$$
\sum_{\si \in B_\ell} \vareps(\si)\  \si \left( K(x;t) M(x)
\Gd(x;t)\right)
=\sum_{\si \in B_\ell} \vareps(\si) \si ( M(x)),
$$
or, given that $K(x;t)$ is $B_\ell$-invariant,
\beq\label{os}
\sum_{\si \in B_\ell} \vareps(\si)\  \si \left(  M(x)
\Gd(x;t)\right)
=\frac{\sum_{\si \in B_\ell} \vareps(\si)\ \si (M(x))}{K(x;t)},
\eeq
where $K(x;t)$ is given by~\eqref{K-odd} and $M(x)$
by~\eqref{M-odd}.

\medskip
\noindent $\bullet$ If $m$ is even,
the equation reads 
$$
  K(x;t) \Gd(x;t)=1 + t(1-\bx_1) \Gd(0,x_2, \ldots, x_\ell;t)
- t \sum_{j=2}^\ell\bx_j \Gd(x_1, \ldots, x_{j-2},x_{j-1}+x_j, 0, x_{j+1}, \ldots,
x_\ell;t)
$$
where
\beq\label{K-even}
K(x;t)=1-t(x_1+\cdots+ x_\ell+\bx_1+\cdots+\bx_\ell).
\eeq
Let us multiply the equation by 
\beq\label{M-even}
M(x):= x_2x_3^2\cdots x_\ell^{\ell-1}(1-x_1)\cdots(1-x_\ell).
\eeq
 This gives:
\begin{multline}\label{Gd-sym-even}
K(x;t) M(x)  \Gd(x;t)= M(x)+ tx_2x_3^2\cdots x_\ell^{\ell-1}
(1-\bx_1)(1-x_1)\prod_{j=2}^\ell(1-x_j) \Gd(0,x_2, \ldots, x_\ell;t)\\
-  t \prod_{j=1}^\ell (1-x_j)\sum_{j=2}^\ell
x_2 \cdots x_ {j-1}^{j-2} x_j^{j-2} x_{j+1}^{j} \cdots x_\ell^{\ell-1} \Gd(x_1, \ldots, x_{j-2},x_{j-1}+x_j, 0, x_{j+1}, \ldots,
x_\ell;t).
\end{multline}
The  term involving $\Gd(0,x_2, \ldots, x_\ell)$ is invariant under
the action of the generator $\si_1$ of
$\B_\ell$.
For $j\ge 2$, the $j^{\hbox{\small th}}$ term of the sum is invariant
under  the action of the
generator $\si_j$.
Consequently, forming the signed sum of~\eqref{Gd-sym-even} over the
hyperoctahedral group $\B_\ell$ yields the  orbit sum~\eqref{os}, where now
$K(x;t)$ and $M(x)$ are respectively given by~\eqref{K-even}
and~\eqref{M-even}. 

\subsection{Extraction of $\Gc(1,\ldots, 1;t)$}
\label{sec:extr-i}

\noindent $\bullet$ Assume $m$ is odd, and consider the orbit
sum~\eqref{os}. For every $\si \in B_\ell$, the term 
$$
\si(M(x) \Gd(x;t))= \si\left(x_1x_2^2\cdots x_\ell^\ell\  \Gc\left(\frac 1{1-t(x_1+\cdots+
  x_\ell)},  \frac 1{1-t(x_2+\cdots+ x_\ell)},\ldots, 
\frac 1{1-tx_\ell};t\right)\right)
$$
is a power series in $t$ with coefficients in $\qs[x_1, \ldots,
  x_\ell, \bx_1, \ldots, \bx_\ell]$. We will prove that the coefficient
of $x_1\cdots x_\ell^\ell$ in~\eqref{os} reduces to $\Gd(0,
\ldots,0;t)=\Gc(1, \ldots, 1;t)$, which is the (ordinary) length
\gf\ of \invs\ avoiding \decm.

First, if $\si$ has some  signed elements, all monomials in the $x_i$'s occurring in
$\si(M(x) \Gd(x;t))$ have at least one negative exponent. Hence
$\si(M(x) \Gd(x;t))$  does not contribute to the coefficient of
$x_1\cdots x_\ell^\ell$.

If $\si$ is not signed, it is a mere permutation
of the $x_i$'s. 
Each monomial occurring in $\si(M(x)
\Gd(x;t))$ is of the form $x_1^{e_1}\cdots x_\ell^{e_\ell}$, where the
$e_i$'s are positive. However, monomials with $e_1=1$ only occur if
$\si(1)=1$ (because of the factor $M(x)=x_1x_2^2\cdots x_\ell^\ell$). 
But then, if we also want $e_2=2$, the only \ps\ $\si$ that contribute
are those that satisfy $\si(2)=2$. Iterating this observation, we see
that the only \p\ $\si$ that contributes to the coefficient of
$x_1x_2^2\cdots x_\ell^\ell$ is the identity. Moreover, its
contribution is clearly $\Gd(0,\ldots, 0;t)=\Gc(1, \ldots, 1;t)$.

Let us state this as a proposition, in which we have also made explicit
the right-hand side of the orbit sum. 

\begin{Proposition}\label{prop:inv-m-odd}
If $m=2\ell+1$,  the ordinary \gf\ of \invs\ avoiding \decm\ is
the coefficient of $x_1x_2^2\cdots x_\ell^\ell$ in a rational function:
 $$
\Ge_m(t):=\sum_{\tau \in \In^{(m)} } t^{|\tau|}
=
\left[x_1x_2^2\cdots x_\ell^\ell\right]
\frac{\det(x_j^i-\bx_j^i)_{1\le i, j \le \ell}}{1-t(1+x_1+\cdots +
  x_\ell+ \bx_1+\cdots +\bx_\ell)}
.
$$
Equivalently, the \emm exponential, \gf\ of these involutions is
$$
\Ge_m^{(e)}(t):=\sum_{\tau \in \In^{(m)} } \frac{t^{|\tau|}}{|\tau|!}
=
e^t\, \left[x_1x_2^2\cdots x_\ell^\ell\right]
\det\left((x_j^i-\bx_j^i)e^{t(x_j+\bx_j)}\right)_{1\le i, j \le \ell}.
$$
\end{Proposition}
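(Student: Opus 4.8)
The plan is to take the orbit sum~\eqref{os} for the odd case (with $K(x;t)$ given by~\eqref{K-odd} and $M(x)=x_1x_2^2\cdots x_\ell^\ell$ by~\eqref{M-odd}) and extract the coefficient of the monomial $x_1x_2^2\cdots x_\ell^\ell$ from both sides. The left-hand side is $\sum_{\si\in B_\ell}\vareps(\si)\,\si(M(x)\Gd(x;t))$, and the argument given just above the proposition already shows that only $\si=\id$ survives this extraction: signed $\si$ produce monomials with a negative exponent, and for unsigned $\si\neq\id$ the lexicographic constraint coming from the exponent pattern $(1,2,\ldots,\ell)$ of $M(x)$ forces $\si$ to fix $1,2,\ldots,\ell$ in turn. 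The identity term contributes $[x_1x_2^2\cdots x_\ell^\ell]\,M(x)\Gd(x;t)=[x_1^0\cdots x_\ell^0]\,\Gd(x;t)=\Gd(0,\ldots,0;t)=\Gc(1,\ldots,1;t)$, and by Proposition~\ref{prop:eq-f-i} this is exactly $\Ge_m(t)=\sum_{\tau\in\In^{(m)}}t^{|\tau|}$. So the whole content left to verify is that the right-hand side of~\eqref{os} equals the displayed rational function.

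For that I would identify the numerator $\sum_{\si\in B_\ell}\vareps(\si)\,\si(M(x))$ as a determinant. Since $M(x)=\prod_{i=1}^\ell x_i^i$ and $B_\ell$ is the group of signed permutations, the signed sum over $B_\ell$ of the monomial $\prod x_i^i$ is the standard symplectic-type Weyl denominator: grouping the $2^\ell\ell!$ terms according to the underlying permutation $\pi\in\Sn_\ell$ and the sign vector $\eps\in\{\pm1\}^\ell$, one gets $\sum_{\pi\in\Sn_\ell}\vareps(\pi)\prod_{i=1}^\ell(x_{\pi(i)}^i-\bx_{\pi(i)}^i)$ after factoring $(x_j^i-\bx_j^i)$ from the two terms that differ only in the sign of $x_j$; this is precisely the expansion of $\det(x_j^i-\bx_j^i)_{1\le i,j\le\ell}$. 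Dividing by $K(x;t)$ from~\eqref{K-odd} gives the first displayed formula. The second (exponential) formula follows by the routine substitution turning ordinary into exponential generating functions: replace $t^{|\tau|}$ by $t^{|\tau|}/|\tau|!$, which amounts to applying the Borel-type transform; concretely one writes $1/\bigl(1-t(1+S)\bigr)$, with $S=\sum(x_j+\bx_j)$, formally as $\int_0^\infty e^{-u}e^{ut(1+S)}\,du$-style bookkeeping, or more simply notes that the coefficient extraction $[x_1x_2^2\cdots x_\ell^\ell]$ commutes with the exponential renormalisation and that $e^{t(1+S)}=e^t\prod_j e^{t(x_j+\bx_j)}$, so the numerator determinant acquires a factor $e^{t(x_j+\bx_j)}$ in row/entry $j$ and the overall $e^t$ comes out front.

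The only genuinely delicate point is making the passage from the ordinary to the exponential generating function rigorous as an identity of formal power series in $t$ — one must check that the coefficient-of-$x^{\text{fixed monomial}}$ operator is well-defined on the relevant series (each coefficient of $t^n$ is a Laurent polynomial in the $x_i$, so this is fine) and that term-by-term division by $|\tau|!$ corresponds exactly to replacing $1/(1-t(1+S))$ by $\sum_{n\ge0}(1+S)^n t^n/n!=e^{t(1+S)}$ before extraction. This is standard but should be stated carefully. Everything else — the vanishing of all non-identity terms on the left, and the determinantal evaluation of the Weyl denominator on the right — is direct and has essentially already been set up in the preceding paragraphs.
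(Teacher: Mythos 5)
Your proposal is correct and follows essentially the same route as the paper: extraction of the coefficient of $x_1x_2^2\cdots x_\ell^\ell$ from the orbit sum (with only $\si=\id$ surviving, by the negative-exponent and lexicographic arguments), evaluation of $\sum_{\si\in B_\ell}\vareps(\si)\,\si(M(x))$ as $\det(x_j^i-\bx_j^i)$ by splitting $B_\ell$ into $\Sn_\ell\times\{\pm1\}^\ell$, and the passage to the exponential generating function via $1/(1-at)\mapsto e^{at}$. No substantive differences.
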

\begin{proof}
  We have just argued that $\Ge_m(t)$ is the coefficient of
  $x_1x_2^2\cdots x_\ell^\ell$ in the right-hand side of~\eqref{os}.
There remains to prove that
$$
\sum_{\si\in B_\ell} \vareps(\si) \si(x_1^1\cdots x_\ell^\ell)
= \det(x_j^i-\bx_j^i)_{1\le i, j \le \ell}.
$$
 This is easily proved if we consider that $\si$ first
replaces some $x_i$'s by their reciprocals, and then permutes the $x_i$'s. More
precisely, there is a bijection between $B_\ell$ and $\Sn_\ell\times
\zs_2^\ell$, sending $\si$ to $(\pi, e_1, \ldots, e_\ell)$, with
$\pi\in \Sn_\ell$ and $e_i\in \{-1, 1\}$, such that
\beq\label{Bl-Sl}
\si (P(x_1, \ldots, x_\ell))= \pi \left(P(x_1^{e_1}, \ldots,
x_\ell^{e_\ell})\right)
\quad \hbox{ and } \quad \vareps(\si)= \vareps(\pi)(-1)^{\sharp\{i:\,
  e_i=-1 \}}.
\eeq
Thus
\begin{eqnarray*}
  \sum_{\si\in B_\ell} \vareps(\si) \si(x_1^1\cdots x_\ell^\ell)
&=& 
\sum_{\pi\in \Sn_\ell} \vareps(\pi)\sum_{e_1, \ldots, e_\ell \in
  \{-1,1\}}
(-1)^{\sharp\{i:\,  e_i=-1 \}} x_{\pi(1)}^{e_1}x_{\pi(2)}^{2e_2}\cdots
x_{\pi(\ell)}^{\ell e_\ell}
\\
&=&
\sum_{\pi\in \Sn_\ell} \vareps(\pi) \prod_{i=1}^\ell 
\left(x_{\pi(i)}^{i}-\bx_{\pi(i)}^{i}\right)
\\
&=&
\det(x_j^i-\bx_j^i)_{1\le i, j \le \ell}.
\end{eqnarray*}
This gives the expression of $\Ge_m(t)$. We then convert it into an
expression for the exponential \gf\ $\Ge_m^{(e)}(t)$ by observing that
the ordinary \gf\ $1/(1-a t)=\sum_n t^n a^n$ 
corresponds to the exponential \gf\ $\sum_n t^n a^n/n!=\exp(at)$. 
\end{proof}

\noindent{\bf Remark.} The determinant occurring in the proposition
can be evaluated in closed form~(see, e.g., \cite[Lemma~2]{kratt-det}):
$$
\det(x_j^i-\bx_j^i)_{1\le i, j \le \ell}=
(x_1\cdots x_\ell)^{-\ell}\prod_{i=1}^\ell (x_i^2-1)
\prod_{1\le i <j\le \ell} \left( (x_i-x_j)(1-x_ix_j)\right)
$$
but this is not needed here.

\medskip 
\noindent $\bullet$
Assume now that $m=2\ell$ is even.  The identity~\eqref{os}
still holds, with $K(x;t)$ and $M(x)$  given
by~\eqref{K-even} and~\eqref{M-even}. Based on the study of the odd
case, it would be tempting to extract the coefficient of $x_2\cdots
x_\ell^{\ell-1}$ in this identity. However, this will not give $\Gd(0,
\ldots, 0;t)$, as both $\si=\id$ and $\si=\si_1$ (the generator of
$B_\ell$ that replaces $x_1$ by $\bx_1$) contribute to this
coefficient. But we note that each term in the equation is a multiple of 
$P(x):=\prod_{i=1}^\ell (1-x_i)$. Hence we will first divide by
$P(x)$.  Let us study  the action
of $\si\in B_\ell$ on $P(x)$, with $\si$ described as
in~\eqref{Bl-Sl}. We have:
$$
\si(P(x))= \pi\left( (1-x_1^{e_1}) \cdots (1-x_\ell^{e_\ell})\right)
=
\pi\left( P(x) \prod_{i: e_i=-1} (-\bx_i) 
\right)
=
(-1)^{\sharp \{i: e_i=-1\}}P(x) \prod_{i: e_i=-1} \bx_{\pi(i)} .
$$
Hence, denoting $e=(e_1, \ldots, e_\ell)$, $x^e= (x_1^{e_1},
\ldots, x_\ell^{e_\ell})$ and $N(x)=x_2\cdots x_\ell^{\ell-1}$,
dividing~\eqref{os} by $P(x)$ gives 
\beq\label{os-revue}
\sum_{\pi \in \Sn_\ell\atop{e\in \{-1,1\}^\ell} } 
\vareps(\pi)
\ \pi\left(  N(x^e)\, \Gd(x^e;t) \prod_{ i: e_i=-1} \bx_{i}\right)
=
\frac 1 {K(x;t)}
\left( \sum_{\pi \in \Sn_\ell \atop{e\in \{-1,1\}^\ell}} 
\vareps(\pi)
\  \pi\left(N(x^e) \prod_{i: e_i=-1} \bx_{i}  \right)\right).
\eeq
Let us now extract from the left-hand side  the
coefficient of $x_2\cdots 
x_\ell^{\ell-1}$. The argument is similar to the odd case.
If $e\not = (1, \ldots,1 )$, each monomial occurring in
$ N(x^e)\, \Gd(x^e;t) \prod_{ i: e_i=-1} \bx_{i}$ contains a
negative exponent, and thus cannot contribute. Now for  $e = (1, \ldots,1 )$,
the term $\pi\left(  N(x)\, \Gd(x;t) \right)$ only contributes if
$\pi=\id$, and then its contribution is $\Gd(0,\ldots, 0;t)$, the
length \gf\ of \invs\ avoiding \decm. We obtain 
 the following counterpart of Proposition~\ref{prop:inv-m-odd}.
\begin{Proposition}\label{prop:inv-m-even}
If $m=2\ell$,  the ordinary \gf\ of \invs\ avoiding \decm\ is
the coefficient of $x_1^0x_2^1\cdots x_\ell^{\ell-1}$ in a rational function:
 $$
\Ge_m(t):=\sum_{\tau \in \In^{(m)} } t^{|\tau|}
=
\left[x_1^0x_2^1\cdots x_\ell^{\ell-1}\right]
\frac{\det(x_j^{i-1}+\bx_j^i)_{1\le i, j \le \ell}}
{1-t(x_1+\cdots +   x_\ell+ \bx_1+\cdots +\bx_\ell)}
.
$$
Equivalently, the \emm exponential, \gf\ of these involutions is
$$
\Ge_m^{(e)}(t):=\sum_{\tau \in \In^{(m)} } \frac{t^{|\tau|}}{|\tau|!}
=
 \left[x_1^0x_2^1\cdots x_\ell^{\ell-1}\right]
\det\left((x_j^{i-1}+\bx_j^i)e^{t(x_j+\bx_j)}\right)_{1\le i, j \le \ell}.
$$
\end{Proposition}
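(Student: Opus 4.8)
The plan is to extract both formulas directly from the orbit sum~\eqref{os-revue}, exactly paralleling the proof of Proposition~\ref{prop:inv-m-odd}. The analysis of the left-hand side of~\eqref{os-revue} is already in place in the discussion preceding the statement: upon extracting the coefficient of $x_1^0x_2^1\cdots x_\ell^{\ell-1}$, every summand with $e\neq(1,\ldots,1)$ drops out, because the factor $N(x^e)\prod_{i:e_i=-1}\bx_i$ contributes $\bx_j^{\,j}$ whenever $e_j=-1$ while $\Gd(x^e;t)$ contributes only nonpositive powers of that variable, so after applying any $\pi$ the monomial still carries a strictly negative exponent; and among the summands with $e=(1,\ldots,1)$ the only surviving one is $\pi=\id$ (this is where dividing the earlier orbit sum~\eqref{os} by $P(x)=\prod_i(1-x_i)$ pays off, since it removes the competing contribution of the sign change $x_1\mapsto\bx_1$), and its value is $\Gd(0,\ldots,0;t)=\Gc(1,\ldots,1;t)=\Ge_m(t)$. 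So the remaining task is just to put the right-hand side of~\eqref{os-revue} in closed form.

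For that I would evaluate $\sum_{\pi\in\Sn_\ell,\;e\in\{-1,1\}^\ell}\vareps(\pi)\,\pi\!\bigl(N(x^e)\prod_{i:e_i=-1}\bx_i\bigr)$ via the factorization $B_\ell\cong\Sn_\ell\times\zs_2^\ell$ of~\eqref{Bl-Sl}, just as in the odd case. Since $N(x)=\prod_{i=1}^\ell x_i^{\,i-1}$, the $i$th factor of $N(x^e)\prod_{i:e_i=-1}\bx_i$ equals $x_i^{\,i-1}$ when $e_i=1$ and $\bx_i^{\,i}$ when $e_i=-1$; summing over $e$ (there is no sign on $e$ any more, since the $(-1)^{\sharp\{i:e_i=-1\}}$ from $\vareps(\si)$ has cancelled against the one coming from $\si(P(x))$) yields $\prod_{i=1}^\ell\bigl(x_i^{\,i-1}+\bx_i^{\,i}\bigr)$, and then $\sum_{\pi}\vareps(\pi)\prod_{i=1}^\ell\bigl(x_{\pi(i)}^{\,i-1}+\bx_{\pi(i)}^{\,i}\bigr)$ is precisely the expansion of $\det\bigl(x_j^{\,i-1}+\bx_j^{\,i}\bigr)_{1\le i,j\le\ell}$. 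Dividing by the kernel $K(x;t)$ of~\eqref{K-even} then gives the asserted rational-function expression for $\Ge_m(t)$.

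Finally, to obtain the exponential generating function I would repeat the last step of the proof of Proposition~\ref{prop:inv-m-odd}: the ordinary series $1/(1-at)=\sum_n t^n a^n$ corresponds to the exponential series $\sum_n t^n a^n/n!=e^{at}$, here with $a=x_1+\cdots+x_\ell+\bx_1+\cdots+\bx_\ell$, so that $e^{at}=\prod_{j=1}^\ell e^{t(x_j+\bx_j)}$ splits as a product over the columns. By multilinearity of the determinant in its columns, absorbing $e^{t(x_j+\bx_j)}$ into column $j$ turns $\det\bigl(x_j^{\,i-1}+\bx_j^{\,i}\bigr)_{1\le i,j\le\ell}\cdot\prod_{j=1}^\ell e^{t(x_j+\bx_j)}$ into $\det\bigl((x_j^{\,i-1}+\bx_j^{\,i})\,e^{t(x_j+\bx_j)}\bigr)_{1\le i,j\le\ell}$, which is the stated formula.

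I do not expect a genuine obstacle here: the whole argument is a mild variation of the odd case, and the conceptual work --- setting up the orbit sum~\eqref{os}, dividing by $P(x)$, and reducing to~\eqref{os-revue} --- has already been done. The only places calling for care are the sign bookkeeping in~\eqref{Bl-Sl} (one must check that the two factors of $(-1)^{\sharp\{i:e_i=-1\}}$ cancel, leaving the sign $\vareps(\pi)$ that makes the $\pi$-sum a determinant) and the verification that the column-multilinearity manipulation is legitimate, i.e. that all the objects involved are genuine power series in $t$ whose coefficients are Laurent polynomials in $x_1,\ldots,x_\ell$, so that coefficient extraction in $t$ commutes with the linear algebra.
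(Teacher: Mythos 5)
Your proposal is correct and follows essentially the same route as the paper: the left-hand-side extraction from~\eqref{os-revue} (vanishing for $e\neq(1,\ldots,1)$, survival of $\pi=\id$ only) is exactly the argument the paper gives just before the statement, and the evaluation of the numerator via the factorization~\eqref{Bl-Sl} into $\prod_i(x_i^{i-1}+\bx_i^i)$ and hence $\det(x_j^{i-1}+\bx_j^i)$, followed by the ordinary-to-exponential conversion $1/(1-at)\mapsto e^{at}$ absorbed columnwise into the determinant, is the paper's proof verbatim. The extra care you flag about the sign cancellation coming from $\si(P(x))$ is indeed the point of dividing by $P(x)$, and it works out as you say.
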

\begin{proof}
  We have just argued that $\Ge_m(t)$ is the coefficient of
  $x_2^1\cdots x_\ell^{\ell-1}$ in the right-hand side of~\eqref{os-revue}.
There remains to evaluate the numerator in the right-hand side:
\begin{eqnarray*}
  \sum_{\pi \in \Sn_\ell \atop{e\in \{-1,1\}^\ell}} 
\vareps(\pi)
\  \pi\left(N(x^e) \prod_{i: e_i=-1} \bx_{i}  \right)
&=& \sum_{\pi \in \Sn_\ell}
\vareps(\pi)\ 
\pi \left( \sum_{e\in \{-1,1\}^\ell}
\prod_{i=1}^\ell x_i^{(i-1)e_i-\chi_{ e_i=-1} }
\right)
\\
&=& \sum_{\pi \in \Sn_\ell}
\vareps(\pi)\ 
\pi \left(
\prod_{i=1}^\ell \left( x_i^{i-1 }+ \bx_i^{i }\right)\right)
\\
&=& \det(  x_j^{i-1 }+ \bx_j^{i }).
\end{eqnarray*}
This gives the expression of $\Ge_m(t)$. Taking the corresponding
exponential \gf\ gives  $\Ge_m^{(e)}(t)$.
\end{proof}

\noindent{\bf Remark.} Again, the determinant occurring in the proposition
can be evaluated in closed form~\cite[Eq.~(2.6)]{kratt-det}, but this is not needed here.

\subsection{Determinantal expression of the series}
\label{sec:det-i}
\noindent $\bullet$ Let us assume that $m$ is odd, and return to
Proposition~\ref{prop:inv-m-odd}.
Taking the exponential \gf\ rather than the ordinary one makes the
extraction of the coefficient of $x_1\cdots x_\ell^\ell$ an
elementary task, as all variables $x_j$ decouple. The series $I_i$
defined by~\eqref{Idef} arise naturally from 
$$
[x^i]e^{t(x+\bx)}= I_i.
$$
We have:
\begin{eqnarray*}
  \Ge_m^{(e)}(t)
&=&
e^t\, \sum_{\pi\in\Sn_\ell} \vareps(\pi)
\prod_{i=1}^\ell 
\left[x_i^i\right]
\left(\left(x_{i}^{\pi(i)}-\bx_{i}^{\pi(i)}\right)e^{t(x_i+\bx_i)}\right)
\\
&=&
e^t\, \sum_{\pi\in\Sn_\ell} \vareps(\pi)
\prod_{i=1}^\ell\left( I_{i-\pi(i)}- I_{i+\pi(i)}\right)
 \\
&=& e^t\, \det \left(I_{i-j}- I_{i+j}\right)_{1\le i,j\le \ell}.
\end{eqnarray*}
We have thus recovered the first part of Theorem~\ref{thm:i}.

\medskip
\noindent $\bullet$ If  $m$ is even, we start from
Proposition~\ref{prop:inv-m-even}. Again, the variables $x_j$ decouple
in the exponential \gf:
\begin{eqnarray*}
  \Ge_m^{(e)}(t)
&=&
 \sum_{\pi\in\Sn_\ell} \vareps(\pi)
\prod_{i=1}^\ell 
\left[x_i^{i-1}\right]
\left(\left(x_{i}^{\pi(i)-1}+\bx_{i}^{\pi(i)}\right)e^{t(x_i+\bx_i)}\right)
\\
&=&
 \sum_{\pi\in\Sn_\ell} \vareps(\pi)
\prod_{i=1}^\ell\left( I_{i-\pi(i)}+ I_{i+\pi(i)-1}\right)
 \\
&=& \det \left(I_{i-j}+ I_{i+j-1}\right)_{1\le i,j\le \ell}.
\end{eqnarray*}
We have thus recovered the second part of Theorem~\ref{thm:i}.

\medskip
\noindent{\bf Remark.} The determinantal expression of $\Ge_m^{(e)}$ implies that this
series is D-finite, that is, satisfies a linear differential equation
with polynomial coefficients. However, this follows as well from the constant
term expressions of Propositions~\ref{prop:inv-m-odd} and~\ref{prop:inv-m-even} using the closure properties of D-finite
series~\cite{lipshitz-df,lipshitz-diag}.

\subsection{The number of fixed points}
We now enrich our results by taking into account the number of fixed
points, thereby recovering Theorem~\ref{thm:i-f}.
Recall from Proposition~\ref{prop:inv-tree} that the label of the \inv\ obtained
by inserting $n+1$ as a fixed point in $\tau\in \In_n^{(m)}$ is 
$(a_1+1, a_2+1, \ldots, a_\ell+1)$  if  $m$ is odd, 
$(1, a_2+1, \ldots, a_\ell+1)$ otherwise. Hence, if we keep track of
the number of fixed points by a new variable $s$, the functional
equation of Proposition~\ref{prop:eq-f-i} becomes:
  \begin{multline*}
\Gc(v;t,s)= v_1+ stv_1 \Gc(v;t,s)\chi_{m\equiv {1}}
+ stv_1 \Gc(v_2,v_2,v_3, \ldots, v_\ell;t,s) \chi_{m\equiv {0}}
\\
+t^2 v_1 \sum_{j=1}^{\ell} v_{j}v_{j+1}\ 
\frac{\Gc(v;t,s)-\Gc(v_1, \ldots, v_{j-1}, v_{j+1}, v_{j+1}, v_{j+2}, \ldots,
  v_\ell;t,s)}{v_{j}-v_{j+1}}.
\end{multline*}
The series $\Gc(1, \ldots, 1;t,s)$ counts \invs\ of   $\In^{(m)}$ by
their length and number of fixed points. The change of
variables~\eqref{vx} now gives
\begin{multline*}
  K(x;t,s) \Gd(x;t,s)=1+ st \Gd(0,x_2, \ldots, x_\ell;t,s)\chi_{m\equiv 0}
\\ - t \sum_{j=1}^\ell
\bx_j \Gd(x_1, \ldots, x_{j-2},x_{j-1}+x_j, 0, x_{j+1}, \ldots,
x_\ell;t,s),
\end{multline*}
where
$$
K(x;t,s)=1-t(x_1+\cdots+ x_\ell)-st\chi_{m\equiv 1}-t(\bx_1+\cdots+
\bx_\ell),
$$
and
$$
\Gd(x;t,s)\equiv \Gd(x_1, \ldots, x_\ell;t)=\Gc\left(\frac 1{1-t(x_1+\cdots+
  x_\ell)},  \frac 1{1-t(x_2+\cdots+ x_\ell)},\ldots, 
\frac 1{1-tx_\ell};t,s\right).
$$

\noindent $\bullet$ If $m$ is odd, the argument of Sections~\ref{sec:orbit-i},~\ref{sec:extr-i},~\ref{sec:det-i}, applies
verbatim. The only difference is that the term $t$ occurring in the
kernel is replaced by $st$. This gives at once the first part of
Theorem~\ref{thm:i-f}, in the form~\eqref{inv-fp}. 
\medskip

\noindent $\bullet$ If $m$ is even, the equation reads:
\begin{multline*}
  K(x;t) \Gd(x;t,s)=1+ t(s-\bx_1) \Gd(0,x_2, \ldots, x_\ell;t,s)
 \\- t \sum_{j=2}^\ell
\bx_j \Gd(x_1, \ldots, x_{j-2},x_{j-1}+x_j, 0, x_{j+1}, \ldots,
x_\ell;t,s)
\end{multline*}
with $K(x;t)=1-t(x_1+\cdots+ x_\ell+\bx_1+\cdots+\bx_\ell)$. We
multiply it by 
$$
M(x;s):= x_2x_3^2\cdots x_\ell^{\ell-1}(s-x_1)\cdots(s-x_\ell),
$$
and then argue as in Section~\ref{sec:orbit-i} to conclude
that
\beq\label{os-f}
\sum_{\si \in B_\ell} \vareps(\si)\  \si\! \left(  M(x;s)
\Gd(x;t,s)\right)
=\frac{\sum_{\si \in B_\ell} \vareps(\si)\ \si (M(x;s))}{K(x;t)},
\eeq
 with the above  values of $K(x;t)$ and $M(x;s)$.

Now we cannot follow exactly the argument of
Section~\ref{sec:extr-i}, because $\si(M(x;s))$ does not differ from
$M(x;s)$ by a monomial. So it does not help to divide the
equation by $(s-x_1)\cdots(s-x_\ell)$. Instead, let us leave the equation as
it is, and extract all terms of the form $x_1^ax_2^1\cdots
x_\ell^{\ell-1}$ with $a\ge 0$. More precisely,  for a series $F(x_1,\ldots, x_\ell;t,s)$ in
$\qs[x_1,\ldots, x_\ell,s][[t]]$, let us denote
\beq\label{not}
[x_1^{\ge 0} x_2^{1}\cdots x_\ell^{\ell-1}] F(x_1,\ldots, x_\ell;t,s):=
\sum _{a \ge 0} x_1^{a}\, [x_1^{a} x_2^{1}\cdots
  x_\ell^{\ell-1}]F(x_1,\ldots, x_\ell;t,s).
\eeq
Consider the term 
$$
\si\!\left(  M(x;s)\Gd(x;t,s)\right)=
\si\!\left( x_2x_3^2\cdots x_\ell^{\ell-1}(s-x_1)\cdots(s-x_\ell)\Gd(x;t,s)\right).
$$ 
Let us decouple in
$\si$ the sign changes $e_1, \ldots, e_\ell$ and the \p\ $\pi$ of the
$x_i$'s, as 
in~\eqref{Bl-Sl}. We wish to determine $[x_1^{\ge 0} x_2^{1}\cdots
  x_\ell^{\ell-1}]\,\si\!\left(  M(x;s)\Gd(x;t,s)\right)$.
\begin{itemize}
\item [--]
If one of the $e_i$'s, for $i\ge 2$, is $-1$, then
all monomials occurring in  $\si\!\left(  M(x;s)\Gd(x;t,s)\right)$ involve a negative exponent and
thus do not contribute. 
\item[--] If $e_1=-1$ while $e_i=1$ for $i \ge 2$, the
only way to obtain a non-zero contribution of $\si\left(
M(x;s)\Gd(x;t,s)\right)$ is to take $\pi=\id$, and the contribution is
then 
$$
s^\ell \Gd(0,\ldots, 0;t,s).
$$
 \item[--]
If $\si=\pi\in \Sn_\ell$,  the contribution is
$$
 (s-x_1) [x_1^{\ge 0} x_2^{1}\cdots x_\ell^{\ell-1}]  
\left((s-x_2)\cdots (s-x_\ell)\ \pi(x_2^{1}\cdots x_\ell^{\ell-1}\Gd(x;t,s))\right).
$$
We note that this is a multiple of $(s-x_1)$. 
\end{itemize}
Hence, the result of our coefficient extraction on~\eqref{os-f} is
\begin{multline*}
  -s^\ell \Gd(0,\ldots, 0;t,s)
+ (s-x_1)\sum_{\pi\in \Sn_\ell} \vareps(\pi) [x_1^{\ge 0} x_2^{1}\cdots x_\ell^{\ell-1}]  
\left( (s-x_2)\cdots (s-x_\ell)\ \pi(x_2^{1}\cdots x_\ell^{\ell-1}\Gd(x;t,s))\right)
\\
= [x_1^{\ge 0} x_2^{1}\cdots x_\ell^{\ell-1}]\,
\frac{\sum_{\si \in B_\ell} \vareps(\si)\ \si
  (M(x;s))}{K(x;t)}.
\end{multline*}
Let us specialize this to $x_1=s$:
$$
 -s^\ell \Gd(0,\ldots, 0;t,s)=\left( [x_1^{\ge 0} x_2^{1}\cdots x_\ell^{\ell-1}]
\left.
\frac{\sum_{\si \in B_\ell} \vareps(\si)\ \si
  (M(x;s))}{K(x;t)}\right)\right|_{x_1\mapsto s}.
$$
The kernel $K(x;t)$ is independent of $s$. But this is also the case of
\begin{eqnarray*}
  \sum_{\si \in B_\ell} \vareps(\si)\ \si   (M(x;s))
&=&
\sum_{\pi \in \Sn_ \ell}\vareps(\pi) \pi \left(
\prod_{i=1}^\ell ( (s-x_i)x_i ^{i-1} -  (s-\bx_i)\bx_i ^{i-1})
\right)
\\
&=&
\det\left(s(x_j ^{i-1}-\bx_j ^{i-1}) - x_j ^{i}+ \bx_j
^{i}\right)_{1\le i, j \le \ell}
\\
&=&
\det\left( - x_j ^{i}+ \bx_j^{i}\right)_{1\le i, j \le \ell}
\end{eqnarray*}
as is seen by taking linear combinations of rows. We have thus obtained the
following counterpart of Proposition~\ref{prop:inv-m-even}.
\begin{Proposition}\label{prop:inv-m-even-f}
If $m=2\ell$,  the ordinary \gf\ of \invs\ avoiding \decm,
counted by the length and number of fixed points, is, with the
notation~\eqref{not}: 
 $$
\Ge_m(t,x_1):=\sum_{\tau \in \In^{(m)} } t^{|\tau|} x_1^{f(\tau)}
=
-\frac{1}{x_1^\ell} 
[x_1^{\ge 0} x_2^1\cdots x_\ell^{\ell-1}]
\frac{\det( \bx_j^{i}- x_j ^{i})_{1\le i, j \le \ell}}
{1-t(x_1+\cdots +   x_\ell+ \bx_1+\cdots
  +\bx_\ell)}.
$$
Equivalently, the \emm exponential, \gf\ of these involutions is
$$
\Ge_m^{(e)}(t,x_1):=\sum_{\tau \in \In^{(m)} } \frac{t^{|\tau|}}{|\tau|!}
\ x_1^{f(\tau)}=-\frac{1}{x_1^\ell}
 [x_1^{\ge 0}x_2^1\cdots x_\ell^{\ell-1}]
\det\left(( \bx_j^{i}- x_j ^{i})e^{t(x_j+\bx_j)}\right)_{1\le i, j \le
  \ell}
.
$$
\end{Proposition}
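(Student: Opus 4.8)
The plan is to obtain Proposition~\ref{prop:inv-m-even-f} by inserting the fixed-point variable $s$ into the whole chain of arguments of Sections~\ref{sec:orbit-i}--\ref{sec:extr-i}, exactly as was just sketched in the paragraphs preceding the statement, and then to carry the two displayed formulas (ordinary and exponential) through the final identification with $s=x_1$. Concretely, I would start from the $s$-refined functional equation for $\Gd(x;t,s)$ in the even case, multiply it by $M(x;s):=x_2x_3^2\cdots x_\ell^{\ell-1}(s-x_1)\cdots(s-x_\ell)$, and form the signed sum over $B_\ell$; because the $j$-th summand on the right is invariant under $\si_j$ (the factor $(s-x_1)\cdots(s-x_\ell)=P(x;s)$ is $B_\ell$-anti-invariant up to the sign-change monomials, but each individual product $\bx_j\Gd(\ldots,x_{j-1}+x_j,0,\ldots)$ is genuinely $\si_j$-fixed for $j\ge 2$, and the $j=1$ term is $\si_1$-fixed), all unknown diagonal series cancel, leaving the orbit sum~\eqref{os-f}.

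Next I would perform the coefficient extraction $[x_1^{\ge 0}x_2^1\cdots x_\ell^{\ell-1}]$ on~\eqref{os-f}, using the three-way case split on $\si=(\pi,e_1,\ldots,e_\ell)$ exactly as in the text: (i) if some $e_i=-1$ with $i\ge 2$ every monomial has a negative exponent and contributes nothing; (ii) if $e_1=-1$ and $e_i=1$ for $i\ge 2$ only $\pi=\id$ survives, giving $-s^\ell\,\Gd(0,\ldots,0;t,s)$; (iii) if $\si=\pi\in\Sn_\ell$ the contribution is the stated multiple of $(s-x_1)$. Setting $x_1=s$ kills case (iii) and isolates $-s^\ell\Gd(0,\ldots,0;t,s)$ on the left. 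For the right-hand side I would compute $\sum_{\si\in B_\ell}\vareps(\si)\,\si(M(x;s))$ by the $B_\ell\cong\Sn_\ell\times\zs_2^\ell$ decomposition~\eqref{Bl-Sl}: it factors as $\sum_{\pi}\vareps(\pi)\,\pi\!\bigl(\prod_i\bigl((s-x_i)x_i^{i-1}-(s-\bx_i)\bx_i^{i-1}\bigr)\bigr)=\det\bigl(s(x_j^{i-1}-\bx_j^{i-1})-x_j^i+\bx_j^i\bigr)$, and then row operations (subtract $s$ times row $i-1$ from row $i$, working downward; the $s\,x_j^{0}$ piece in row $1$ is $s-s=0$) collapse this to $\det(\bx_j^i-x_j^i)_{1\le i,j\le\ell}$, independent of $s$. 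Combining and replacing the placeholder name $s$ by $x_1$ yields the ordinary-\gf\ formula; passing from $1/(1-at)=\sum_n t^na^n$ to $\sum_n t^na^n/n!=e^{at}$ in the denominator (i.e.\ replacing $1/K(x;t)$ by $\prod_j e^{t(x_j+\bx_j)}$ inside the determinant, legitimate since the variables decouple) gives the exponential version.

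The step I expect to be the main obstacle is making the coefficient extraction with the \emph{unbounded} exponent of $x_1$ fully rigorous --- that is, justifying the notation~\eqref{not} and the manipulation ``extract $[x_1^{\ge 0}x_2^1\cdots x_\ell^{\ell-1}]$, then specialise $x_1=s$''. One must check that each term in the orbit sum, after applying $\si$, is a genuine power series in $t$ with coefficients in $\qs[x_1,\ldots,x_\ell,s]$ for case-(iii) terms and in $\qs[x_1^{\pm},\ldots,x_\ell^{\pm},s]$ in general (so that ``negative exponent $\Rightarrow$ no contribution to $x_i^{i-1}$, $i\ge2$'' is valid), that the partial extraction in $x_1$ produces a well-defined element of $\qs[x_1,s][[t]]$, and that substituting $x_1=s$ into it commutes with everything in sight; the expansion $1/K(x;t)=\sum_{n}t^n(x_1+\cdots+\bx_\ell)^n$ has coefficients in $\qs[x^\pm]$, so each fixed power of $t$ really is a Laurent polynomial and the extractions are finite operations, which is what makes the whole scheme legitimate --- but this bookkeeping is exactly where care is needed. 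Everything else (the $B_\ell$-invariance of the kernel, the determinant evaluations, the row reduction, the ordinary-to-exponential passage) is routine given Sections~\ref{sec:inv-i}--\ref{sec:extr-i}.
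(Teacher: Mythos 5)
Your proposal is correct and follows essentially the same route as the paper: the $s$-refined orbit sum over $B_\ell$ with the multiplier $M(x;s)=x_2x_3^2\cdots x_\ell^{\ell-1}(s-x_1)\cdots(s-x_\ell)$, the three-way case split on the sign changes in the extraction of $[x_1^{\ge 0}x_2^1\cdots x_\ell^{\ell-1}]$, the specialisation $x_1=s$ to kill the $(s-x_1)$-multiples, and the row reduction showing $\sum_\si\vareps(\si)\si(M(x;s))=\det(\bx_j^i-x_j^i)$ is independent of $s$. The bookkeeping issue you flag (coefficients of each power of $t$ being Laurent polynomials, so the extractions are finite) is exactly how the paper implicitly justifies the notation~\eqref{not}, so no genuinely new ingredient is needed.
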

We can now perform the coefficient extraction explicitly in the
expression of $\Ge_m^{(e)}(t,x_1)$:
\begin{eqnarray*}
  \Ge_m^{(e)}(t,x_1)
&=&
-\frac{1}{x_1^\ell}\sum_{\pi\in \Sn_\ell} \vareps(\pi) 
[x_1^{\ge 0}]\left( (\bx_1^{\pi(1)}- x_1^{\pi(1)})e^{t(x_1+\bx_1)}\right)
\prod_{i=2}^\ell [x_i^{i-1}]\left( ( \bx_i^{\pi(i)}- x_i ^{\pi(i)})e^{t(x_i+\bx_i)}
\right)\\
&=&
-\frac{1}{x_1^\ell}\sum_{\pi\in \Sn_\ell} \vareps(\pi) 
\sum_{k\ge 0} x_1^k (I_{k+\pi(1)} - I_{k-\pi(1)})\prod_{i=2}^\ell (I_{i+\pi(i)-1} - I_{i-\pi(i)-1})
\\
&=&
\sum_{k\ge 0} x_1^{k-\ell}   \det \left(
\begin{array}{c}
 (I_{k-j}-I_{k+j} )_{1\le j \le \ell}
\\
 (I_{i+j-1} - I_{i-j-1})_{2\le i \le \ell, 1\le j \le \ell}
\end{array}\right).
\end{eqnarray*}
Upon extracting the coefficient of $x_1^p$, this gives the second part
of Theorem~\ref{thm:i-f}.

\section{Permutations  with no long ascending subsequence}
\label{sec:perm}

We now want to derive from the functional equation of
Proposition~\ref{prop:eq-p} the Bessel \gf\ of \ps\ avoiding \incm, 
given in Theorem~\ref{thm:p}.
We follow the same steps as in the case of involutions, but the
coefficient extraction is  more delicate.

\subsection{Invariance properties of the kernel}
\label{sec:inv-p}
The kernel of the equation of Proposition~\ref{prop:eq-p}, that is,
the coefficient of $\Fc(v;t)$, reads
$$
1-tv_1- t\sum_{j=2}^m \frac{v_{j-1}v_j}{v_{j-1}-v_j}.
$$
Its invariance properties appear clearly if we set
$$
v_j=\frac 1{x_1+\cdots + x_j}.
$$
Indeed, the kernel then becomes
\beq\label{K-p}
K(x;t):= 1-t (\bx_1+\cdots + \bx_m),
\eeq
with $\bx_i=1/x_i$, 
and is  invariant under the action of the symmetric group $\Sn_m$, seen as a group
of transformations of Laurent polynomials in the $x_i$. This group is
generated by $m-1$ elements of order~2, denoted $\si_1, \ldots,
\si_{m-1}$:
$$
\si_j(P(x_1,  \ldots, x_m))= P(x_1,\ldots, x_{j-1},  x_{j+1},x_j,
x_{j+2}, \ldots, x_m).
$$
The functional equation now reads
$$
K(x;t)\Fd(x;t) = 1- t  \sum_{j=1}^{m-1} \bx_{j+1} \Fd(x_1, \ldots,  x_{j-1}, x_{j}+x_{j+1}, 0,
x_{j+2}, \ldots, x_m;t),
$$
with 
\beq\label{fd-def}
\Fd(x;t)\equiv \Fd(x_1, \ldots, x_m;t)=
\Fc\left(\frac 1 {x_1}, \frac 1 {x_1+x_2}, \ldots,  \frac 1
        {x_1+\cdots + x_m};t\right).
\eeq

\subsection{Orbit sum}
\label{sec:orbit-p}
Let us multiply the equation by 
\beq\label{M-p}
M(x)=x_1^0 x_2^1\cdots x_m^{m-1}.
\eeq
 This gives:
\begin{multline}\label{Fd-sym}
K(x;t) M(x)  \Fd(x;t)= M(x)\\
-  t \sum_{j=1}^{m-1}
x_1^0 \cdots  x_j^{j-1} x_{j+1}^{j-1} x_{j+2}^{j+1} \cdots x_m^{m-1} 
\Fd(x_1, \ldots, x_{j-1},x_{j}+x_{j+1}, 0, x_{j+2}, \ldots,x_m;t).
\end{multline}
 The $j^{\hbox{\small th}}$ term of the sum is invariant under the
 action of the
generator $\si_j$ (which exchanges $x_{j}$ and $x_{j+1}$).
Consequently, forming the signed sum of~\eqref{Fd-sym} over the
symmetric group $\Sn_m$ gives the following orbit sum:
$$
\sum_{\si \in\Sn_m } \vareps(\si)\  \si \left( K(x;t) M(x)
\Fd(x;t)\right)
=\sum_{\si \in \Sn_m} \vareps(\si) \si ( M(x)),
$$
or, given that $K(x;t)$ is $\Sn_m$-invariant,
\beq\label{os-p}
\sum_{\si \in \Sn_m} \vareps(\si)\  \si \left(  M(x)
\Fd(x;t)\right)
=\frac{\sum_{\si \in \Sn_m} \vareps(\si)\ \si (M(x))}{K(x;t)}
=\frac{\det(x_j^{i-1})_{1\le i, j\le m}}{K(x;t)},
\eeq
where $K(x;t)$ is given by~\eqref{K-p} and $M(x)$
by~\eqref{M-p}.

\subsection{Extraction of $\Fc(1, \ldots, 1;t)$}
\label{sec:extr-p}

For $1\le j \le m$, let us now denote $z_j=x_1+\cdots
+x_j$. Equivalently, $x_j=z_j-z_{j-1}$ with $z_0=0$. All
series occurring in the orbit sum~\eqref{os-p} become  series 
in $t$ with coefficients in $\qs(z_1, \ldots, z_m)$. In particular,
$$
\Fd(x;t)=
\Fc\left(\frac 1 {z_1}, \frac 1 {z_2}, \ldots,  \frac 1
        {z_m};t\right)
$$
has coefficients which are \emm Laurent polynomials, in the $z_j$'s. This is
not the case for all terms in~\eqref{os-p}. For instance, if
$\si$ is the 2-cycle $(1,2)$,
$$
\si (\Fd(x;t))=\Fc\left(\frac 1 {x_2}, \frac 1 {x_1+x_2}, \ldots,  \frac 1
        {x_1+\cdots + x_m};t\right)=
\Fc\left(\frac 1 {z_2-z_1}, \frac 1 {z_2}, \ldots,  \frac 1
        {z_m};t\right)
$$
involves coefficients which are \emm not, Laurent polynomials in the $z_j$'s.
In order to
perform our  extraction, we will expand all rational functions of the
$z_j$'s as  (iterated)
Laurent series, by expanding first in $z_1$, then in $z_2$, and so
on. For instance, the expansion of $1/(x_1+x_3+x_4)$ reads
$$
\frac 1 {x_1+x_3+x_4} =\frac 1 {z_4-z_2+z_1}
= \sum_{e_1 \ge 0} \frac {(-z_1)^{e_1}}{(z_4-z_2)^{e_1+1}}
= \sum_{e_1 \ge 0,e_2\ge 0} 
{e_1+e_2 \choose e_1} \frac{(-z_1)^{e_1}z_2^{e_2}}{z_4^{1+e_1+e_2}}.
$$
In other words, the coefficients of our series in $t$ now lie in the ring of
\emm iterated
Laurent series, in $z_1, \ldots, z_m$, which 
is defined inductively as follows:
\begin{itemize}
\item [--] if $m=1$, it coincides with the ring of Laurent
  series\footnote{Recall that our Laurent series only involve finitely
    many negative exponents.} in
  $z_1$ (with rational coefficients),
\item [--] if $m>1$, it is  the ring of Laurent series in
  $z_1$ whose coefficients are iterated Laurent series in $z_2,
  \ldots, z_m$.
\end{itemize}
It follows from this definition that an iterated Laurent series in the
$z_j$'s only contains finitely many \emm non-positive, monomials, that
is, monomials $z_1^{e_1} \cdots z_m^{e_m}$ with  $e_j \le 0$ for all
$j$. This allows us to  define below a linear operator $\Lambda$, which extracts 
from an iterated Laurent series \emm some, coefficients  associated
with non-positive monomials  and adds them up.
\begin{Definition}
\label{def:Lambda} 
Let $\Lambda$ be the linear operator defined on  iterated
  Laurent series in $z_1, \ldots, z_m$ by the following action  on monomials:
  \beq\label{Lambda-def}
\Lambda( z_1^{e_1} \cdots z_m^{e_m} )
=\left\{
\begin{array}{ll}
  1, & \hbox{if } e_1 \le 0, \ldots,  e_m \le 0 \hbox{ and } e_j =0
  \Rightarrow e_{j+1}=\cdots = e_m=0;
\\
0, &\hbox{otherwise}.
\end{array}
\right.
\eeq
\end{Definition}
\noindent{\bf Remark.}
The action of $\Lambda$ can also be described as the extraction of a constant
term: for any iterated Laurent series $F(z_1, \ldots, z_m)$,
$$
\Lambda (F(z_1, \ldots, z_m))=
[z_1^0 \cdots z_m^0]\left( F(z_1, \ldots, z_m) \sum_{i=0}^m 
\prod_{j=1}^i \frac{z_j}{1-z_j}\right).
$$

This operator has been designed to extract from~\eqref{os-p}
the series $\Fc(1,  \ldots, 1; t)$ in which we are interested. The
following proposition is thus the counterpart of Propositions~\ref{prop:inv-m-odd}
and~\ref{prop:inv-m-even}.
\begin{Proposition}\label{prop:p-extr}
  The ordinary \gf\ of \ps\ avoiding \incm\ is
obtained by applying $\Lambda$ to  a rational function:
 $$
\Fe_m(t):=\sum_{\tau \in \Sn^{(m)} } t^{|\tau|}
=
\Lambda\left(
\frac{\det(x_j^{i-j})_{1\le i, j \le m}}
{1-t(\bx_1+\cdots +   \bx_m)}
\right),
$$
with $x_j=z_j-z_{j-1}$ and $z_0=0$.

Equivalently, the \emm exponential, \gf\ of these \ps\ is
$$
\Fe_m^{(e)}(t):=\sum_{\tau \in \Sn^{(m)} } \frac{t^{|\tau|}}{|\tau|!}
=
\Lambda\left(
{\det(x_j^{i-j}e^{t\bx_j})_{1\le i, j \le m}}
\right).
$$
\end{Proposition}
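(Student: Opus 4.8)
The plan is to run the orbit sum~\eqref{os-p} through the operator $\Lambda$, after rescaling it so that the identity element of $\Sn_m$ contributes the series $\Fd$ itself. First I would pass to the variables $z_j=x_1+\cdots+x_j$ (so $x_j=z_j-z_{j-1}$, $z_0=0$), in which $\Fd(x;t)=\Fc(1/z_1,\ldots,1/z_m;t)$; by~\eqref{weight-p} the monomials of $\Fd$ are exactly the $z_1^{-(a_2-a_1)}z_2^{-(a_3-a_2)}\cdots z_m^{-(a_{m+1}-a_m)}t^{|\tau|}$ with $a_1:=1$ and $a_{m+1}:=|\tau|+1$, all $z$-exponents being $\le 0$. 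Every term of~\eqref{os-p} then has coefficients rational in the $z_j$, which we expand as iterated Laurent series (in $z_1$, then $z_2$, \ldots) so that $\Lambda$ applies. Factoring $x_j^{1-j}$ out of the $j$-th column gives $\det(x_j^{i-1})=M(x)\det(x_j^{i-j})$ with $M(x)=x_1^0x_2^1\cdots x_m^{m-1}$, so dividing~\eqref{os-p} by the unit $M(x)$ rewrites the orbit sum as
\[
\Fd(x;t)+\sum_{\si\ne\id}\vareps(\si)\,\frac{\si\bigl(M(x)\Fd(x;t)\bigr)}{M(x)}=\frac{\det(x_j^{i-j})_{1\le i,j\le m}}{1-t(\bx_1+\cdots+\bx_m)} .
\]

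Next I would show that $\Lambda$ returns the wanted series on the identity term and kills the rest. For the identity term, $\Lambda(\Fd)=\Fc(1,\ldots,1;t)=\Fe_m(t)$: by linearity it is enough to check that each monomial $z_1^{-(a_2-a_1)}\cdots z_m^{-(a_{m+1}-a_m)}$ meets the condition in Definition~\ref{def:Lambda}. The exponents are $\le 0$, and if $a_{j+1}(\tau)=a_j(\tau)$ then $\tau$ must avoid $12\cdots(j+1)$ --- otherwise the first $j$ letters of a leftmost occurrence of $12\cdots(j+1)$ would form an occurrence of $12\cdots j$ ending strictly before $a_{j+1}(\tau)$, so $a_j(\tau)<a_{j+1}(\tau)$ --- hence $\tau$ avoids $12\cdots k$ for every $k\ge j+1$ and $a_{j+1}(\tau)=\cdots=a_{m+1}(\tau)=|\tau|+1$, i.e. all later exponents vanish too. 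So $\Lambda(\Fd)=\sum_\tau t^{|\tau|}$.

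The hard part will be showing $\Lambda\bigl(\si(M\Fd)/M\bigr)=0$ for every $\si\ne\id$; this is the ``more delicate'' coefficient extraction flagged at the start of Section~\ref{sec:perm}. Here $\si(M)/M=\prod_i x_i^{\si^{-1}(i)-i}$ and $\si(\Fd)=\sum_\tau\prod_k\si(z_k)^{-(a_{k+1}-a_k)}$ with $\si(z_k)=x_{\si(1)}+\cdots+x_{\si(k)}=\sum_{i\in\si(\{1,\ldots,k\})}x_i$, so one has to track, monomial by monomial, the iterated Laurent expansion of $\si(M\Fd)/M$ and show that every monomial fails the condition of Definition~\ref{def:Lambda}. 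The promising structural lever is the least index $j_0$ with $\si(j_0)\ne j_0$: since $\si^{-1}$ also fixes $1,\ldots,j_0-1$ and $\si^{-1}(j_0)>j_0$, the factor $\prod_i x_i^{\si^{-1}(i)-i}$ carries a \emph{strictly positive} power of $x_{j_0}=z_{j_0}-z_{j_0-1}$, and the idea is that this forces, in every resulting monomial, either a positive exponent on $z_{j_0}$ or else a zero exponent on $z_{j_0}$ together with a nonzero exponent further along --- in either case $\Lambda$ annihilates the monomial. This is meant to be the $\Lambda$-analogue of the ordering/positivity arguments that pin $\si=\id$ in the proof of the MacMahon formula above and in the proof of Proposition~\ref{prop:inv-m-odd} (where survival under the extraction iteratively forces $\si(1)=1$, then $\si(2)=2$, and so on); but because of the change of variables and the intervention of $\Lambda$ I expect the actual bookkeeping here to be the real work of the section.

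Granting this, $\Lambda$ applied to the displayed identity yields $\Fe_m(t)=\Lambda\bigl(\det(x_j^{i-j})/(1-t(\bx_1+\cdots+\bx_m))\bigr)$. To pass to the exponential generating function, I would expand $1/(1-t(\bx_1+\cdots+\bx_m))=\sum_{n\ge0}t^n(\bx_1+\cdots+\bx_m)^n$, apply $\Lambda$ coefficientwise in $t$ (valid, since $\Lambda$ sees only the $z_j$), and substitute $t^n\mapsto t^n/n!$; this replaces the geometric series by $e^{t(\bx_1+\cdots+\bx_m)}=\prod_j e^{t\bx_j}$, and absorbing the factor $e^{t\bx_j}$ into the $j$-th column (determinants being column-multilinear) produces $\det\bigl(x_j^{i-j}e^{t\bx_j}\bigr)_{1\le i,j\le m}$, as claimed.
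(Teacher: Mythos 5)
Your overall architecture is exactly the paper's: divide the orbit sum~\eqref{os-p} by $M(x)$, apply $\Lambda$, show the identity term yields $\Fc(1,\ldots,1;t)$ via the chain condition on the labels ($a_j=a_{j+1}$ forces $a_j=\cdots=a_{m+1}=|\tau|+1$, which is precisely~\eqref{Lambda-prop}), and convert to the exponential form at the end. Those parts, including your justification of the chain condition and the identity $\det(x_j^{i-1})=M(x)\det(x_j^{i-j})$, are correct and match the paper.

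However, there is a genuine gap at the step you yourself flag as ``the real work'': the vanishing of $\Lambda\bigl(\si(M\Fd)/M\bigr)$ for $\si\neq\id$. This is the entire content of the paper's Lemma~\ref{lem:extr} and its three supporting Properties, and your proposed lever --- the least non-fixed index $j_0$ --- does not suffice as stated. One can indeed show the fraction is non-negative in $z_{j_0}$ (every factor $1/\si(z_k)^{e_k}$ is non-negative in $z_{j_0}$ because $\max\si(\{1,\ldots,k\})>j_0$ whenever $k\ge j_0$, and the monomial factors are polynomials or non-negative by Property~\ref{P1}); so each monomial is either killed because its $z_{j_0}$-exponent is positive, or has $z_{j_0}$-exponent zero. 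But in the second case you must exhibit a \emph{strictly nonzero} (in fact negative) exponent at some $z_p$ with $p>j_0$, and nothing in your sketch produces it: a priori all later exponents could vanish too, in which case $\Lambda$ would \emph{not} annihilate the monomial. The paper's argument locates this guaranteed negative variable at $z_{\si(j)}$, where $\si(j)$ is the \emph{largest non-fixed left-to-right maximum} of $\si$ (chosen so that the exponent $\si^{-1}(\si(j)+1)-\si(j)-1$ of $x_{\si(j)+1}$ vanishes, leaving only the strictly negative contribution of $(z_{\si(j)}-z_{\si(j)-1})^{j-\si(j)}$), and pairs it with a non-left-to-right-maximum value $\si(i)\le i$ satisfying $\si(i)<\si(j)$, in which the fraction is non-negative. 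Already in the paper's example $\si=13425$ the zero-exponent branch genuinely occurs in $z_2$, and only the separate proof that $z_4$ is always negative saves the argument. Until you supply an analogue of Properties~\ref{P1}--\ref{P3} establishing such a negative variable beyond $j_0$, the proof is incomplete.
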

\noindent{\bf Remarks}\\
1. The fact that the action of $\Lambda$ can be described as a
constant term extraction, combined with closure properties of D-finite
series~\cite{lipshitz-df,lipshitz-diag}, implies that the series $\Fe_m$ (and
$\Fe_m^{(e)}$) are D-finite. This was first proved by
Gessel~\cite{gessel-symmetric}. 
\\
2. Again, the determinant is a Vandermonde determinant and can be
evaluated in closed form, but this will not be needed.
\begin{proof}
We will prove that for all $\si \in \Sn_m$,
\beq\label{Lambda-F}
\Lambda \left(\frac { \si \left(  M(x)\Fd(x;t)\right)} {M(x)} \right)=
\left\{
\begin{array}{ll}
  \Fc(1,  \ldots, 1; t), & \hbox{if } \si=\id;\\
0, & \hbox{otherwise,}
\end{array}
\right.
\eeq
so that the first result directly follows from~\eqref{os-p}, after
dividing by $M(x)$ and applying $\Lambda$.  
It is then easily converted into an expression for the exponential \gf.

Recall the definition~\eqref{fd-def} of $\Fd(x;t)$,
and that $\Sn_m$ acts by permuting the $x_j$'s.
Also, 
$$
\Fc(v_1, \ldots, v_m;t)= \sum_{\tau \in \Sn^{(m)}}
v_1^{a_2(\tau)-1} v_2^{a_3(\tau)-a_2(\tau)} \cdots v_m^{|\tau|+1-a_m(\tau)} t^{|\tau|},
$$
where  the labels $a_i(\tau)$ are defined by~\eqref{aj-def}.  This definition implies that, if $a_{j}(\tau)=
a_{j+1}(\tau)$, then   $a_j(\tau)=a_{j+1}(\tau)=\cdots = a_{m}(\tau)=|\tau|+1$.
In other words, the $v$-monomials occurring in $\Fc(v;t)$ satisfy a
property that should be reminiscent of the definition of $\Lambda$:
\beq\label{Lambda-prop}
\Fc(v_1, \ldots, v_m;t)= \sum_{ (e_1, \ldots, e_m)\in \cE} c(e_1,
\ldots, e_m)\,
v_1^{e_1} v_2^{e_2} \cdots v_m^{e_m} t^{e_1+\cdots + e_m},
\eeq
where 
$$
\cE=\{(e_1, \ldots, e_m)\in \ns^m : e_j=0 \Rightarrow e_{j+1}=\cdots
= e_m=0\}.
$$

\medskip
With this property at hand, we can now address the proof of~\eqref{Lambda-F}.
If $\si=\id$, 
$$
\Lambda \left(\frac { \si \left(  M(x)\Fd(x;t)\right)} {M(x)} \right)=
\Lambda \left( \Fc\Big(\frac  1 {z_1}, \frac 1 {z_2}, \ldots,  \frac 1
        {z_m};t\Big)\right) = \Fc(1,  \ldots, 1; t)
$$
by definition of $\Lambda$ and~\eqref{Lambda-prop}. 

There remains to prove  the second part of ~\eqref{Lambda-F}. Let us
consider an example, say $m=5$ and $\si=13425$. 
Let $\tau\in \Sn^{(5)}_n$, and denote  $e_i=
a_{i+1}(\tau)-a_i(\tau)$, with $a_1(\tau)=1$ and
$a_{m+1}(\tau)=|\tau|+1$. Of course, $e_i\ge 0$ for all $i$. Up to a
factor $t^{|\tau|}$, the contribution of
$\tau$ in  $\si \left(  M(x)\Fd(x;t)\right)/ M(x)$ is 
\begin{multline*}
  \frac 1 {x_2x_3^2x_4^3x_5^4}\, \si \left( 
\frac{x_2x_3^2x_4^3x_5^4}{x_1^{e_1} (x_1+x_2)^{e_2} \cdots (x_1+\cdots + x_5)^{e_5}}
\right)
=\\
\frac{x_3x_4^2x_2^3x_5^4}{x_2x_3^2x_4^3x_5^4\,x_1^{e_1} (x_1+x_3)^{e_2} (x_1+x_3+x_4)^{e_3}  (x_1+x_2+x_3+x_4)^{e_4}  (x_1+\cdots + x_5)^{e_5}}
=\\
\frac{(z_2-z_1)^2}{(z_3-z_2)(z_4-z_3)z_1^{e_1} (z_3-z_2+z_1)^{e_2} (z_4-z_2+z_1)^{e_3}
  z_4^{e_4}  z_5^{e_5}}.
\end{multline*}
To prepare the Laurent expansion in the variables $z_i$, we
rewrite this fraction as
\begin{multline*}
\frac{(z_2-z_1)^2}
{
z_1^{e_1} z_3^{1+e_2}z_4^{1+e_3+e_4}  z_5^{e_5}
\left(1-\frac{z_3}{z_4}\right)
\left(1-\frac{z_2}{z_3}\right)^{1+e_2}
\left(1-\frac{z_2}{z_4}\right)^{e_3}
\left(1+\frac{z_1}{z_3\left(1-\frac{z_2}{z_3}\right)}\right)^{e_2} 
\left(1+\frac{z_1}{z_4\left(1-\frac{z_2}{z_4}\right)}\right)^{e_3}
 }.
\end{multline*}
It is now clear that, in each term of the Laurent expansion,
 $z_2$ has a non-negative exponent, while  $z_4$ has a negative exponent.
By definition of $\Lambda$, this implies that 
$$
\Lambda\left(\frac 1 {M(x)} \ \si \left( \frac{ M(x)}{x_1^{e_1} (x_1+x_2)^{e_2} \cdots (x_1+\cdots + x_5)^{e_5}}\right)\right)=0.
$$
As this holds for every \p\ $\tau\in \Sn^{(5)}_n$,
we have proved that~\eqref{Lambda-F} holds for $\si=13425$.

Let us say that a series of $\qs(x_1, \ldots, x_m)[[t]]$ is
 positive in $z_j$ (or $z_j$-positive) if,
in every term of its $z$-expansion, $z_j$ appears with a positive
exponent. We define similarly the notion of $z_j$-negative,
$z_j$-non-positive, $z_j$-non-negative series. 
We have just observed that, for $m=5$ and $\si=13425$, the series  $\si \left(
M(x)\Fd(x;t)\right)/ M(x)$ is non-negative in $z_2$ but negative in
$z_4$. This is generalized by the following lemma.
\begin{Lemma}\label{lem:extr}
  Take $\si\in \Sn_m\setminus\{\id\}$.  Let $\si(j)$ be the largest non-fixed
  left-to-right maximum of $\si$. That is,
$$
\hbox{for } k<j,  \si(k)<\si(j), \hbox{ and for every k such that }
 \si(k)>\si(j), \hbox{ one has }\si(k)=k.
$$
Let $\si(i)$ be any value that is not a left-to-right maximum and
satisfies $\si(i)\le i$.
For $e_1\ge 0, \ldots, e_m \ge 0$, consider the fraction
\beq\label{frac}
\frac 1 {M(x)}\ \si \!\left( 
\frac{M(x)}{x_1^{e_1} (x_1+x_2)^{e_2} \cdots (x_1+\cdots + x_m)^{e_m}}
\right).
\eeq
Then this fraction is non-negative in $z_{\si(i)}$ but negative in
$z_{\si(j)}$. Since $\si(i)<\si(j)$, applying $\Lambda$
to this fraction gives $0$.
\end{Lemma}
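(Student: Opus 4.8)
The plan is to track, for a fixed non-identity permutation $\si\in\Sn_m$, what happens to the exponents of the variables $z_j$ in the $z$-expansion of the fraction~\eqref{frac}, exactly as in the worked example $m=5$, $\si=13425$. The fraction is, up to the factor coming from $M(x)/\si(M(x))$ (which is a Laurent monomial in the $x_j$'s, hence a Laurent polynomial in the $z_j$'s after substitution), a product of factors of the form $(x_{\si(1)}+x_{\si(2)}+\cdots+x_{\si(k)})^{-e_k}$. After writing $x_j=z_j-z_{j-1}$, each such factor becomes a negative power of a linear form in the $z_j$'s, and the whole point is to identify which $z_j$ ``dominates'' each linear form under the iterated-Laurent-series expansion (expand in $z_1$ first, then $z_2$, and so on).

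\textbf{Key steps.} First I would make precise the bookkeeping: the sum $x_{\si(1)}+\cdots+x_{\si(k)}$ telescopes into a signed sum of the $z$'s indexed by $\{\si(1),\ldots,\si(k)\}$ and its ``shift by one''. For the expansion convention used here, the term that survives at the ``outer'' level is the one with the largest index present with a nonzero coefficient; so the factor $(x_{\si(1)}+\cdots+x_{\si(k)})^{-e_k}$ contributes a factor $z_{M_k}^{-e_k}$ (times a series in the lower-indexed $z$'s with non-negative exponents), where $M_k:=\max\{\si(1),\ldots,\si(k)\}$, \emph{unless} $M_k-1$ is also among $\{\si(1),\ldots,\si(k)\}$, in which case the two contributions $z_{M_k}$ cancel and one must look at $z_{M_k-1}$, etc. Second, I would show that if $\si(j)$ is the largest non-fixed left-to-right maximum, then for the stage $k=j$ the dominating index is precisely $\si(j)$: indeed $M_j=\si(j)$ because $\si(j)$ is a left-to-right maximum, and $\si(j)-1$ is \emph{not} among $\{\si(1),\ldots,\si(j)\}$ — if it were, it would be some $\si(k)$ with $k<j$, but $\si(k)<\si(j)$ forces $\si(k)\le\si(j)-1$, and the only value strictly below $\si(j)$ that could equal $\si(j)-1$ would again have to be a fixed point lying above $\si(j)$ by the definition of $j$, a contradiction; one also checks the factors coming from $M(x)/\si(M(x))$ do not resurrect a positive power of $z_{\si(j)}$. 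Hence the fraction is $z_{\si(j)}$-negative (here one uses that at least one $e_k$ with $k\ge j$ is positive — which is automatic for the generic monomial, and for the degenerate monomials one argues separately that $\Lambda$ kills them anyway via the $e_j=0\Rightarrow e_{j+1}=\cdots=0$ clause). Third, for the index $\si(i)$ with $\si(i)$ not a left-to-right maximum and $\si(i)\le i$: every factor $(x_{\si(1)}+\cdots+x_{\si(k)})^{-e_k}$ in which $z_{\si(i)}$ appears does so only at an inner level of the expansion — because some index larger than $\si(i)$ is already present in that sum when $z_{\si(i)}$ enters — so $z_{\si(i)}$ occurs only with non-negative exponents there; combined with the monomial prefactor (whose exponent of $z_{\si(i)}$ is controlled by $\si(i)\le i$), the whole fraction is $z_{\si(i)}$-non-negative. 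Since $\si(i)<\si(j)$, the definition~\eqref{Lambda-def} of $\Lambda$ then forces the value $0$: a monomial $z_1^{e_1}\cdots z_m^{e_m}$ with $e_{\si(i)}\ge 0$ and $e_{\si(j)}<0$ either has some $e_r>0$ (killed) or has $e_{\si(i)}=0$ with $\si(i)<\si(j)$ while $e_{\si(j)}\ne 0$ (killed by the implication clause).

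\textbf{Main obstacle.} The delicate point is the second step: controlling exactly which $z_j$ dominates the factor attached to $e_j$, i.e.\ proving that no cancellation pushes the dominating index below $\si(j)$ and that the monomial prefactor $\si(M(x))/M(x)$ cannot compensate the resulting negative power of $z_{\si(j)}$. This requires a careful combinatorial argument about the set $\{\si(1),\ldots,\si(j)\}$ using the precise definition of $j$ as the index of the largest non-fixed left-to-right maximum; the role of the fixed points above $\si(j)$ is exactly what makes the ``$\si(j)-1$ is absent'' claim work, and getting that implication airtight is the real content of the lemma. Once that is in place, steps one and three are essentially the routine telescoping and expansion-order bookkeeping illustrated in the $m=5$ example, and the conclusion $\Lambda(\cdot)=0$ is immediate from Definition~\ref{def:Lambda}.
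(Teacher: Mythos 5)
Your overall plan --- expand everything as an iterated Laurent series in $z_1,\ldots,z_m$, show the fraction is non-negative in $z_{\si(i)}$ and negative in $z_{\si(j)}$, and conclude from the definition of $\Lambda$ --- is exactly the paper's, and your first and third steps are essentially sound. But your central step contains a genuine error. You claim that in $x_{\si(1)}+\cdots+x_{\si(k)}=\sum_{p\in S}(z_p-z_{p-1})$ the top variable $z_{M_k}$ (with $M_k=\max S$) can be cancelled when $M_k-1\in S$. The telescoping goes the other way: the coefficient of $z_q$ in this sum is $\chi_{q\in S}-\chi_{q+1\in S}$, so it is $z_{M_k-1}$ that cancels, while $z_{M_k}$ always survives with coefficient $+1$. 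Consequently no ``patch'' is needed --- and the patch you propose, namely that $\si(j)-1\notin\{\si(1),\ldots,\si(j)\}$, is in fact false: for $\si=2341\in\Sn_4$ one has $j=3$, $\si(j)=4$, and $3=\si(2)$ lies in $\{\si(1),\si(2),\si(3)\}=\{2,3,4\}$; yet $x_2+x_3+x_4=z_4-z_1$ is still negative in $z_4$ once raised to a negative power. So the combinatorial argument you single out as ``the real content of the lemma'' both attacks a non-problem and is itself incorrect.

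The second gap is where the \emph{strict} negativity in $z_{\si(j)}$ actually comes from. The factors $(x_{\si(1)}+\cdots+x_{\si(\ell)})^{-e_\ell}$ are only non-positive in $z_{\si(j)}$ (all the relevant $e_\ell$ may vanish), and your fallback via the ``$e_j=0\Rightarrow e_{j+1}=\cdots=0$'' clause of $\Lambda$ conflates the exponents $e_\ell$ appearing in the lemma with the $z$-exponents of the expanded monomials; it does not close the case. In the paper the strictness is supplied entirely by the monomial prefactor $\si(M(x))/M(x)=\prod_\ell(z_{\si(\ell)}-z_{\si(\ell)-1})^{\ell-\si(\ell)}$: the factor with $\ell=j$ has negative exponent $j-\si(j)$ and is therefore negative in $z_{\si(j)}$, while the only other factor involving $z_{\si(j)}$, namely $(z_{\si(j)+1}-z_{\si(j)})^{\si^{-1}(\si(j)+1)-\si(j)-1}$, is identically $1$ because $\si(j)+1$ must be a fixed point of $\si$ --- this is precisely where the definition of $j$ as the largest \emph{non-fixed} left-to-right maximum enters. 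You mention that one ``also checks'' the prefactor does not resurrect a positive power of $z_{\si(j)}$, but that check is the heart of the proof, not a footnote, and the mechanism behind it is the one fact your write-up never isolates.
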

Returning to the example $\si=13425 $ studied above, we observe that
the lemma applies with $\si(i)=2$ and $\si(j)=4$. 

This lemma implies the second part of~\eqref{Lambda-F}: indeed, the
contribution of any $\tau\in \Sn^{(m)}$ in $\si(M(x)\Fd(x;t))/M(x)$ is of the
form~\eqref{frac}. Hence proving the lemma will  conclude the proof of
Proposition~\ref{prop:p-extr}. 
\end{proof}

\medskip
\noindent{\em Proof of Lemma~}\ref{lem:extr}.
  We establish this lemma via a sequence of three elementary properties.
  \begin{Property}\label{P1}
           Let $i_1 <i_2<\cdots <i_k$, and $e \in \zs$. The fraction
$$
\frac 1{(\pm z_{i_1}\pm \cdots \pm z_{i_k})^e}
$$
is non-negative in $z_{i_1}, \ldots , z_{i_{k-1}}$. If $e\ge 0$, it is
non-positive in $z_{i_k}$, and even negative in  $z_{i_k}$ if $e>0$.
  \end{Property}
  \begin{proof}
 The result is obvious if $e\le 0$, as the fraction is a polynomial in
this case. If $e>0$, we prove it by induction on $k$. It clearly holds
for $k=1$. If $k>1$, we write
\begin{multline*}
  \frac 1{(\pm z_{i_1}\pm \cdots \pm z_{i_k})^e}= \frac 1
{(\pm z_{i_2}\pm \cdots \pm z_{i_k})^e\left( 1 \pm \frac{z_{i_1}}{z_{i_2}\pm
  \cdots \pm z_{i_k}}\right)^e}\\
=\sum_{n\ge 0} {e-1+n \choose n} \frac{(\pm z_{i_1})^n}{(\pm z_{i_2}\pm
  \cdots \pm z_{i_k})^{e+n}},
\end{multline*}
and conclude by induction on $k$.
    \end{proof}
 \begin{Property}\label{P2}
    Let  $\si $, $j$,  $e_1,    \ldots, e_m$ be as in
    Lemma~{\rm\ref{lem:extr}}. The fraction
$$
 \si \left( 
\frac{1}{x_1^{e_1} (x_1+x_2)^{e_2} \cdots (x_1+\cdots + x_m)^{e_m}}
\right)
$$
is non-negative  in all $z_{\si(k)}$ such that $\si(k)$ is not a
left-to-right maximum, and non-positive in $z_{\si(j)}$.
  \end{Property}
 \begin{proof}
 It suffices to prove that the result holds for each term
\beq\label{term}
 \si \left( 
\frac{1}{ (x_1+\cdots + x_\ell)^{e_\ell}}
\right)= 
\frac 1 {(z_{\si(1)}- z_{\si(1)-1} + \cdots + z_{\si(\ell)}- z_{\si(\ell)-1})^{e_\ell}},
\eeq
for  $\ell \in \{1,\ldots,  m\}$ (with $z_0=0$). 

   By Property~\ref{P1}, this term 
is non-negative in all variables, except possibly in $z_{\max(\si(1),
  \ldots, \si({\ell}))}$. Since $\max(\si(1),
  \ldots, \si({\ell}))$ is always a
left-to-right maximum, this proves the first part of the property.

Consider now the variable $z_{\si(j)}$.
\begin{itemize}
\item [--]
If $\ell<j$, then $\max(\si(1),
  \ldots, \si({\ell}))< \si(j)$, so that the term~\eqref{term}  is independent of $z_{\si(j)}$, and thus
  non-positive in this variable.
\item  [--] If $j\le \ell \le \si(j)$, then $\max(\si(1),
  \ldots, \si({\ell}))= \si(j)$. 
Then~\eqref{term} is non-positive in
  $z_{\si(j)}$  by Property~\ref{P1}. 
\item[--] 
 Finally, if $\ell>\si(j)$, then  $\{\si(1),
  \ldots, \si({\ell})\}=\{1, \ldots, \ell\}$, so that the
  term~\eqref{term} simply reads $1/z_{\ell}^{e_\ell}$. This  is
  independent of  $z_{\si(j)}$, and thus  non-positive in this variable.
\end{itemize}
    \end{proof}%
 \begin{Property}\label{P3}
      Let  $\si $ and $j$ be as in   Lemma~{\rm\ref{lem:extr}}. The fraction
$$
\frac {\si \left( M(x)\right)}{M(x)}
$$
is non-negative  in all $z_{\si(k)}$ such that $\si(k)\le k$, and
negative in $z_{\si(j)}$.  
  \end{Property}
 \begin{proof}
    We have 
$$
\frac {\si \left( M(x)\right)}{M(x)}= \prod_{\ell=1}^m
(z_{\si(\ell)}-z_{\si(\ell)-1})^{\ell-\si(\ell)} .
$$
Assume $\si(k)\le k$. The two terms of the above product that involve
$z_{\si(k)}$ are 
$(z_{\si(k)}-z_{\si(k)-1})^{k-\si(k)}$ and
$(z_{\si(k)+1}-z_{\si(k)})^{e}$, with $e= \si^{-1}(\si(k)+1)-\si(k)-1$.
The former term is non-negative in $z_{\si(k)}$ because  $\si(k)\le k$. The
latter term is non-negative in $z_{\si(k)}$  by
Property~\ref{P1}. This proves the first part of the property.

The two terms  that involve
$z_{\si(j)}$ are  $(z_{\si(j)}-z_{\si(j)-1})^{j-\si(j)}$ and 
$(z_{\si(j)+1}-z_{\si(j)})^{e}$, with $e=
\si^{-1}(\si(j)+1)-\si(j)-1$. Since $\si(j)>j$, the former term is
negative in $z_{\si(j)}$ by Property~\ref{P1}. By construction of $j$,
the exponent $e$ is $0$, so that the latter term is simply $1$.
    \end{proof}

\medskip
Lemma~\ref{lem:extr} now follows by combining  Properties~\ref{P2}
and~\ref{P3}.
\qed

\subsection{Determinantal expression of the series}
\label{sec:det-p}
Let us write $e^{t\bx}= \sum_{b\ge 0} (t\bx)^b/b!$. The second formula
in Proposition~\ref{prop:p-extr} reads: 
\beq\label{F-expr}
\sum_{\tau \in \Sn^{(m)}} \frac {t^{|\tau|}}{|\tau|!}=
\sum_{b_1, \ldots, b_m \ge 0}\frac{t^{b_1+\cdots + b_m}}{b_1! \cdots b_m!}
\sum_{\si \in \Sn_m} \vareps(\si) \ \Lambda \!
\left( \frac {\si (M(x))}{M(x) \si(x^b)}\right)
\eeq
where $M(x)= x_2x_3^2\cdots x_m ^{m-1}$, $b=(b_1, \ldots, b_m)$, and
$x^b=x_1^{b_1} \cdots x_m^{b_m}$.  
We will give a closed form expression of $\Lambda \!
\left( \frac {\si (M(x))}{M(x) \si(x^b)}\right)$ (Lemma~\ref{extr-si}), which
in turn will give a  closed form expression of the sum
over $\si$ occurring in~\eqref{F-expr}.
\begin{Proposition}\label{prop:eval-sum}
For $b=(b_1, \ldots, b_m) \in \ns^m$,
$$
\sum_{\si \in \Sn_m} \vareps(\si) \ \Lambda \!
\left( \frac {\si (M(x))}{M(x) \si(x^b)}\right)
=
\frac{(b_1+\cdots + b_m)!}{\prod_{i=1}^m (b_i-i+m)!}
\prod_{1\le i <j \le m} (b_i-i-b_j+j).
 $$
\end{Proposition}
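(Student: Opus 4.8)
The plan is to carry out the two‑step program announced just before the statement: first a closed form for the single term $\Lambda\bigl(\sigma(M(x))/(M(x)\,\sigma(x^b))\bigr)$ (this is Lemma~\ref{extr-si}), then the alternating sum over $\sigma$, which will lead to exactly the determinant evaluated in the proof of MacMahon's formula in Section~\ref{sec:ex}. To begin, I would pass to the variables $z_j$. Since $M(x)=\prod_i x_i^{i-1}$ and $x^b=\prod_i x_i^{b_i}$, the fraction equals $\prod_i x_{\sigma(i)}^{i-1-b_i}/\prod_i x_i^{i-1}$; substituting $x_j=z_j-z_{j-1}$ ($z_0=0$) and re‑indexing the numerator by $j=\sigma(i)$ gives
$$\frac{\sigma(M(x))}{M(x)\,\sigma(x^b)}=\prod_{j=1}^m\bigl(z_j-z_{j-1}\bigr)^{\sigma^{-1}(j)-j-b_{\sigma^{-1}(j)}}.$$
So everything reduces to computing $\Lambda$ of a product $\prod_j(z_j-z_{j-1})^{-q_j}$ for integers $q_j$; here $q_j:=j-\sigma^{-1}(j)+b_{\sigma^{-1}(j)}$, and note that $q_1+\cdots+q_m=b_1+\cdots+b_m=:n$.

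For Lemma~\ref{extr-si} itself, I would expand each factor with $j\ge 2$ as the iterated Laurent series $\sum_{k_j\ge 0}\binom{q_j-1+k_j}{k_j}z_{j-1}^{k_j}z_j^{-q_j-k_j}$ (a polynomial if $q_j\le 0$), and $(z_1)^{-q_1}$ for $j=1$. In a generic term the exponent of $z_m$ is negative, so by the definition of $\Lambda$ \emph{all} $z$‑exponents must be strictly negative, which leaves the nested binomial sum $\sum\prod_{j\ge 2}\binom{q_j-1+k_j}{k_j}$ over $(k_2,\dots,k_m)$ subject to $0\le k_{\ell+1}\le q_\ell+k_\ell-1$ (with $k_1:=0$). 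Peeling off $k_m,k_{m-1},\dots$ one at a time, using $\sum_{k=0}^N\binom{s+k}{k}=\binom{s+N+1}{N}$ and $\binom{p+k}{k}\binom{p+r+k}{r}=\binom{p+r}{p}\binom{p+r+k}{k}$, the sum collapses by induction on $m$ to
$$\Lambda\!\left(\frac{\sigma(M(x))}{M(x)\,\sigma(x^b)}\right)=\prod_{\ell=1}^{m-1}\binom{q_\ell+q_{\ell+1}+\cdots+q_m-1}{q_\ell-1}.$$

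I would then substitute $q_\ell=\ell-\sigma^{-1}(\ell)+b_{\sigma^{-1}(\ell)}$ and form $\sum_{\sigma}\varepsilon(\sigma)(\cdots)$. Rewriting the binomials as ratios of factorials and using $q_1+\cdots+q_m=n$, the alternating sum over $\Sn_m$ — ``the reflection principle performed at the level of power series'' — should reorganize into
$$\sum_{\sigma\in\Sn_m}\varepsilon(\sigma)\,\Lambda\!\left(\frac{\sigma(M(x))}{M(x)\,\sigma(x^b)}\right)=n!\,\det\!\left(\frac{1}{(b_i-i+j)!}\right)_{1\le i,j\le m}.$$
(Equivalently: since $\sum_\sigma\varepsilon(\sigma)\,\sigma\bigl(M(x)/x^b\bigr)=\det\bigl(x_j^{\,i-1-b_i}\bigr)_{i,j}$, the left‑hand side is just $\Lambda\bigl(\det(x_j^{\,i-j-b_i})_{i,j}\bigr)$, and the content is that this constant term equals $n!\det(1/(b_i-i+j)!)$.) This determinant is then evaluated exactly as in Section~\ref{sec:ex}: pull $1/(b_i-i+m)!$ out of the $i$‑th row, note that the remaining $(i,j)$‑entry $(b_i-i+j+1)\cdots(b_i-i+m)$ is a monic polynomial of degree $m-j$ in $b_i-i$, so the determinant equals the Vandermonde determinant $\det\bigl((b_i-i)^{m-j}\bigr)=\prod_{1\le i<j\le m}(b_i-i-b_j+j)$; this yields the asserted formula.

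The main obstacle is the passage from the per‑$\sigma$ product of binomials to the factorial determinant. Term by term this is \emph{false}: the individual values $\Lambda\bigl(\sigma(M(x))/(M(x)\sigma(x^b))\bigr)$ count lattice configurations \emph{confined} by the staircase condition built into $\Lambda$ (hence they are genuine nested binomial sums, not the plain multinomials $n!/\prod(b_i-i+j)!$ appearing in the determinant), and the identity holds only after the signed sum, where real cancellation occurs. Making this rigorous requires the multivariate reflection/cancellation argument — matching up the monomials that survive $\Lambda$ for the various $\sigma$ — rather than a coefficient‑by‑coefficient comparison; one must also dispose of the degenerate cases (some $q_\ell\le 0$, or $b_i-i+j<0$) using the convention $1/r!=0$ for $r<0$, exactly as in Section~\ref{sec:ex}.
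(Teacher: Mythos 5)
Your reduction of the single term $\Lambda\bigl(\sigma(M(x))/(M(x)\,\sigma(x^b))\bigr)$ to a product of binomial coefficients is essentially the paper's Lemma~\ref{extr-si}, and your nested-binomial computation of the negative part matches the paper's inductive proof of~\eqref{neg-part} (you gloss over the degenerate cases --- the role of $k=\max\{i:b_i>0\}$ and the vanishing of the term when $\si$ does not fix $\{k+1,\dots,m\}$ pointwise --- but you flag these and they are repairable). The genuine gap is the step you yourself identify as ``the main obstacle'': the evaluation of the signed sum $\sum_{\si}\vareps(\si)\prod_{\ell}\binom{q_\ell+\cdots+q_m-1}{q_\ell-1}$. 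This is the actual content of the proposition, and ``should reorganize into $n!\,\det(1/(b_i-i+j)!)$'' is an assertion, not an argument; your parenthetical restatement as a constant term of a determinant only rephrases the claim. You correctly observe that no term-by-term identification with the multinomials $n!/\prod(b_i-i+j)!$ is possible, but you do not supply the cancellation mechanism that makes the signed sum collapse.

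The paper closes this gap not by a monomial-matching or reflection argument but by an algebraic identity (Lemma~\ref{lem:identite}): one rewrites each binomial product in the form~\eqref{prod-alt}, i.e.\ as $\frac{(e_1+\cdots+e_k)!}{\prod_i(e_i+k-i)!}\prod_{i=1}^{k-1}\frac{e_i(e_i+1)\cdots(e_i+k-i)}{e_i+\cdots+e_k}$, and then shows that
$$
\sum_{\tau\in \Sn_k} \vareps(\tau)\,\tau\!\left(\prod_{i=1}^{k-1}\frac{(x_i+u_i)\cdots(x_i+u_k)}{x_i+u_i+\cdots+x_k+u_k}\right)=\prod_{1\le i<j\le k}(x_i-x_j),
$$
specialized at $x_i=b_i-i$, $u_i=i$. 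The proof of that identity is a partial fraction expansion in $u_k$: the non-polynomial parts $\al_\ell(x,u)/(x_\ell+u_\ell+\cdots+x_k+u_k)$ are invariant under the transposition of $x_\ell$ and $x_{\ell+1}$ and hence cancel in the signed sum, leaving $\det\bigl(\prod_{h=i}^{k-1}(x_j+u_h)\bigr)$, which is a Vandermonde determinant. Without this lemma (or an equivalent device) your argument does not establish the proposition.
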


Let us delay for the moment the proof of this proposition, and derive
from it  Gessel's determinantal formula (Theorem~\ref{thm:p}).

\noindent{\em Proof of Theorem~}~\ref{thm:p}.
The exponential \gf\ of \ps\ of $\Sn^{(m)}$ now reads
$$
\sum_{\tau \in \Sn^{(m)}} \frac {t^{|\tau|}}{|\tau|!}=
\sum_{b_1, \ldots, b_m \ge 0}{t^{b_1+\cdots + b_m}}
\frac{(b_1+\cdots + b_m)!}{\prod_{i=1}^m b_i!(b_i-i+m)!}
\prod_{1\le i <j \le m} (b_i-i-b_j+j).
$$
Replacing $t$ by $t^2$, and taking the Bessel \gf\ gives
\beq\label{perm-final}
\sum_{\tau \in \Sn^{(m)} } \frac {t^{2|\tau|}}{|\tau|!^2}=
\sum_{b_1, \ldots, b_m \ge 0}
\frac{t^{2(b_1+\cdots + b_m)}}{\prod_{i=1}^m b_i!(b_i-i+m)!}
\prod_{1\le i <j \le m} (b_i-i-b_j+j).
\eeq
But this is exactly Gessel's determinant. Indeed:
\begin{eqnarray*}
  \det (I_{j-i})_{1\le i, j\le m}
&=& \sum_{\si \in \Sn_m} \vareps(\si)
\prod_{i=1}^m I_{\si(i)-i}\\
&=& \sum_{\si \in \Sn_m} \vareps(\si)
\prod_{i=1}^m \sum_{b_i\ge 0} \frac{t^{2b_i-i+\si(i)}}{b_i!
  (b_i-i+\si(i))!}\\
&=&
\sum_{b_1, \ldots, b_m \ge 0}\frac{t^{2(b_1+\cdots + b_m)}}
{\prod_{i=1}^m b_i!(b_i-i+m)!}\det
\left((b_i-i+j+1)\cdots(b_i-i+m)\right),
\end{eqnarray*}
and this coincides with~\eqref{perm-final}, because the above determinant is the
Vandermonde determinant in the variables $u_i:=b_i-i$ (since $(b_i-i+j+1)\cdots(b_i-i+m)$ is a polynomial in
$u_i$ of dominant term $u_i^{m-j}$).
\qed

\medskip
There remains to prove Proposition~\ref{prop:eval-sum}. The proof relies on two lemmas. The first one is a simple identity based on a
partial fraction expansion. The second gives a closed form expression
of  $\Lambda\left( \frac {\si  (M(x))}{M(x) \si(x^b)}\right)$, for $b \in \ns^m$.

\begin{Lemma}\label{lem:identite}
  Let $x_1, \ldots, x_k, u_1, \ldots, u_k$ be indeterminates, and let
  the symmetric group $\Sn_k$ act on the $x_i$'s by permuting them
  (that is, $\tau(x_i)= x_{\tau(i)}$ for $\tau \in \Sn_k$).
Then
$$
\sum_{\tau\in \Sn_k} \vareps(\tau) \ 
\tau  \left( 
\prod_{i=1}^{k-1}
\frac{(x_i+u_i) \cdots (x_i+u_k)}
{x_i+u_i+ \cdots + x_k+u_k}
\right)
=\prod_{1\le i<j\le k} (x_i-x_j).
$$
\end{Lemma}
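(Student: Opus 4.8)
The plan is to prove the identity by induction on $k$, extracting the variable $x_k+u_k$ (equivalently, peeling off the last factor $i=k-1$ from the product) and using a partial fraction expansion in a well-chosen variable. First I would observe that the left-hand side is an alternating sum over $\Sn_k$, hence antisymmetric in $x_1,\dots,x_k$, so it is divisible by the Vandermonde $\prod_{i<j}(x_i-x_j)$; since the right-hand side is exactly that Vandermonde, it suffices to check that the quotient equals $1$. To do this cleanly, I would instead argue directly: split the sum over $\Sn_k$ according to the value $\tau^{-1}(k)=p$, i.e. which index $i$ gets sent to $x_k$. For the term $i=k-1$ in the product, the factor is $\dfrac{x_{k-1}+u_{k-1}}{x_{k-1}+u_{k-1}+x_k+u_k}$ under the identity, and after applying $\tau$ the denominators are linear forms $x_{\tau(i)}+u_i+\cdots+x_{\tau(k)}+u_k$.

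The key computational step is a partial fraction decomposition: group the permutations $\tau$ by where they send the last slot, and resum. Concretely, writing $y_i = x_i$ for brevity, one isolates the dependence on one variable — say treat $x_k$ as the active variable while $x_1,\dots,x_{k-1}$ are frozen — and shows that
$$
\sum_{\tau\in\Sn_k}\vareps(\tau)\,\tau\!\left(\prod_{i=1}^{k-1}\frac{(x_i+u_i)\cdots(x_i+u_k)}{x_i+u_i+\cdots+x_k+u_k}\right)
= \sum_{p=1}^{k}(-1)^{k-p}\,\frac{\prod_{i\ne p}(x_p - \text{(shifted linear form)})}{\cdots}\cdot(\text{sub-sum over }\Sn_{k-1}).
$$
Each sub-sum over $\Sn_{k-1}$ is, by the induction hypothesis, the Vandermonde $\prod_{1\le i<j\le k,\ i,j\ne p}(x_i-x_j)$ in the remaining variables (with the $u$'s appropriately re-indexed — note the product structure $(x_i+u_i)\cdots(x_i+u_k)$ telescopes correctly when one index is removed). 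So the identity reduces to the rational-function identity
$$
\sum_{p=1}^{k}(-1)^{k-p}\,\Big(\prod_{j\ne p}\frac{x_p+u_p+\cdots}{\cdots}\Big)\prod_{\substack{i<j\\ i,j\ne p}}(x_i-x_j)=\prod_{1\le i<j\le k}(x_i-x_j),
$$
which is a Lagrange-interpolation / partial-fraction identity: after dividing both sides by $\prod_{i<j}(x_i-x_j)$ it becomes the statement that a certain rational function of $x_p$ summed against its residues recovers $1$, i.e. $\sum_p \operatorname{Res}_{x=x_p}\big(\text{rational function with simple poles at the }x_p\text{ and vanishing at }\infty\big)=\,$its value, a standard telescoping.

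The main obstacle I anticipate is bookkeeping: keeping track of how the factors $(x_i+u_i)(x_i+u_i+u_{i+1})\cdots$ and the running-sum denominators re-index when one index is deleted, and verifying the signs $\vareps(\tau)$ match $(-1)^{k-p}$ times the sign of the induced permutation of $\Sn_{k-1}$ (this is the usual cofactor-expansion sign, so it will work out, but it must be checked). An alternative, possibly cleaner, route avoiding induction: recognize the left-hand side as the determinant expansion of a matrix whose $(i,j)$ entry is built from $\prod_{r\ge i}(x_{?}+u_r)/(\text{linear form})$ and reduce to a Cauchy-type determinant evaluation — but I expect the inductive partial-fraction argument to be the most transparent, and I would present that one, with the Lagrange-interpolation identity isolated as a one-line sub-claim.
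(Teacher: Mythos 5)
Your plan has a genuine gap at its central step. You propose to induct on $k$ by fixing which variable occupies the last slot (say $\tau(k)=p$) and then asserting that the resulting sub-sum over the remaining $(k-1)!$ permutations is, by the induction hypothesis, a Vandermonde in the remaining variables. But that sub-sum is \emph{not} an instance of the lemma for $k-1$: after fixing $\tau(k)=p$, every denominator equals $\bigl(x_{\tau(i)}+u_i+\cdots+x_{\tau(k-1)}+u_{k-1}\bigr)+(x_p+u_k)$ and every numerator still carries the extra factor $(x_{\tau(i)}+u_k)$, so what you get is $\prod_{i\ne p}(x_i+u_k)$ times an alternating sum in which all denominators are shifted by the constant $x_p+u_k$. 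The shift cannot be absorbed by re-indexing the $u$'s without destroying the required numerator structure, so the ``telescoping when one index is removed'' that you invoke fails; this is a structural mismatch, not bookkeeping, and closing the induction would require formulating and proving a strictly stronger statement (with an arbitrary additive constant in the denominators). The concluding ``Lagrange interpolation / sum of residues'' step is likewise unsupported: the poles of the individual terms lie at points of the form $x=-(u_\ell+\cdots)$ determined by the $u$'s, not at $x=x_p$, so the standard residue-telescoping identity you appeal to does not apply as written. (The opening observation that the left-hand side is antisymmetric, hence a symmetric multiple of the Vandermonde, is correct but only restates the problem.)

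For contrast, the paper's proof needs no induction: it expands $T(x,u)=\prod_{i=1}^{k-1}(x_i+u_i)\cdots(x_i+u_k)/(x_i+u_i+\cdots+x_k+u_k)$ in partial fractions \emph{with respect to the parameter $u_k$} (numerator and denominator both have degree $k-1$ in $u_k$), obtaining $T=C+\sum_{\ell<k}\al_\ell/(x_\ell+u_\ell+\cdots+x_k+u_k)$ with $C=\prod_{1\le i\le j<k}(x_i+u_j)$ found by letting $u_k\to\infty$. Each pole term is checked to be invariant under the exchange of $x_\ell$ and $x_{\ell+1}$, hence is annihilated by the alternating sum over $\Sn_k$, and the surviving part $\sum_\tau\vareps(\tau)\,\tau(C)=\det\bigl(\prod_{h=i}^{k-1}(x_j+u_h)\bigr)_{1\le i,j\le k}$ is a Vandermonde determinant by a leading-term argument. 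If you want to salvage your route, the missing ingredient is precisely an argument that kills the pole contributions inside the alternating sum; the paper's symmetry observation is the mechanism that does this.
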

\begin{proof}
  Let us denote $u=(u_1, \ldots, u_k)$ and
$$
T(x,u)=\prod_{i=1}^{k-1}
\frac{(x_i+u_i) \cdots (x_i+u_k)}
{x_i+u_i+ \cdots + x_k+u_k}. 
$$
This is a rational function of $u_k$, in which  the
  numerator and denominator have degree $k-1$.  By a partial
  fraction expansion,
\beq\label{T-C}
T(x,u)= C(x,u)+ \sum_{\ell=1}^{k-1} \frac{\al_\ell(x,u)}{x_\ell+u_\ell+ \cdots + x_k+u_k},
\eeq
where $C$ and the $\al_\ell$'s are independent of $u_k$.
By letting $u_k$ tend to infinity, one obtains
$$
C(x,u)= \prod_{1\le i\le j <k} (x_i+u_j).
$$
The value of $\al_\ell$ is obtained by taking the residue of $T(x,u)$
at $u_k=-(x_\ell+u_\ell+ \cdots + x_k)$. This gives, for $\ell\le k-1$:
$$
\al_\ell(x,u)= \frac{\prod_{1\le i\le j <k} (x_i+u_j)
\prod_{1\le i < k}(x_i - (x_\ell+u_\ell+ \cdots + x_k))}
{\prod_{i\not = \ell, i<k} (x_i+u_i+ \cdots + x_{k-1}+u_{k-1} +x_k - (x_\ell+u_\ell+
  \cdots + x_k))}.
$$
Return now to~\eqref{T-C}. It is  easy to check that $\al_\ell(x,u)/(x_\ell+u_\ell+ \cdots + x_k+u_k)$ is
left unchanged by the exchange of $x_\ell$ and $x_{\ell+1}$. Consequently,
the sum of the lemma reads
$$
\sum_{\tau\in \Sn_k} \vareps(\tau) \ 
\tau  \left( T(x,u)\right)
=
\sum_{\tau\in \Sn_k} \vareps(\tau)\ \tau  \left( C(x,u)\right)
=
\det\left(\prod_{h=i}^{k-1}(x_j+u_h)\right)_{1\le i,j\le k}
=
\prod_{1 \le i <j \le k} (x_i-x_j),
$$
because $\prod_{h=i}^{k-1}(x_j+u_h)$ is a polynomial in 
$x_j$ of leading term
 $x_j^{k-i}$: the sum over
$\tau$ thus reduces to a Vandermonde determinant.
\end{proof}

\begin{Lemma}
\label{extr-si}
  Let $b=(b_1, \ldots, b_m) \in \ns^m$ and $\si \in \Sn_m$. Let
$$
\frac 1{x^e} =\frac {\si  (M(x))}{M(x) \si(x^b)},
$$
where as before $M(x)=x_2 \cdots x_m^{m-1}$. 
That is,  $e=(e_1, \ldots, e_m)$ where $e_i=b_{\tau(i)} -\tau(i)+i$ and
$\tau=\si^{-1}$.  
Let 
$k=\max\{i : b_i>0\}$ (if $b=(0, \ldots, 0)$, we take $k=0$).
Then
\beq\label{Lambda-1xe}
\Lambda \left( \frac 1 {x^e}\right)= 
\left\{
\begin{array}{cl}
\displaystyle   \prod_{i=1}^k {{e_i+ \cdots + e_k-1}\choose 
{e_i -1}},
 & \hbox{ if } \si(j)=j \hbox{ for all } j > k;\\
0, & \hbox{ otherwise }.
\end{array}\right.
\eeq
\end{Lemma}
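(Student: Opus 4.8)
The plan is to expand $1/x^e=\sigma(M(x))/\!\bigl(M(x)\,\sigma(x^b)\bigr)$ as an iterated Laurent series in $z_1,\dots,z_m$ and to read off $\Lambda$ directly. Since $e_i=b_{\tau(i)}-\tau(i)+i$ with $\tau=\sigma^{-1}$, one checks that $x_l=z_l-z_{l-1}$ (with $z_0=0$) occurs in $1/x^e$ with exponent $-e_l$, so that
$$\frac1{x^e}=\prod_{l=1}^m (z_l-z_{l-1})^{-e_l},$$
a product whose $l$-th factor involves only $z_{l-1}$ and $z_l$; I expand it as $(z_l-z_{l-1})^{-e_l}=\sum_{n\ge 0}\binom{e_l+n-1}{n}z_{l-1}^{n}z_l^{-e_l-n}$, which is valid for any $e_l\in\zs$ (and a finite sum when $e_l\le0$). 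I will use two facts: (i) if $\tau(l)=l$ and $l>k$ then $e_l=b_l=0$; (ii) if $\tau(l)<l$ then $e_l=b_{\tau(l)}+(l-\tau(l))\ge1$.

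\emph{Case $\sigma(j)\ne j$ for some $j>k$.} Following the proof of Lemma~\ref{lem:extr}, I exhibit indices $a<b$ with $1/x^e$ non-negative in $z_a$ and negative in $z_b$. Let $j_0$ be the largest non-fixed point of $\tau$; then $j_0>k$, and since $\tau$ permutes $\{1,\dots,j_0\}$ we get $\tau(j_0)<j_0$, so $e_{j_0}\ge1$ by (ii) while $e_l=0$ for $l>j_0$ by (i); hence $1/x^e=\prod_{l=1}^{j_0}(z_l-z_{l-1})^{-e_l}$. Put $i_0=\tau^{-1}(j_0)$; then $i_0<j_0$ and $e_{i_0}=b_{j_0}-j_0+i_0=i_0-j_0<0$. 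Now $z_{j_0}$ occurs only in the factor $(z_{j_0}-z_{j_0-1})^{-e_{j_0}}$, which is negative in $z_{j_0}$ by Property~\ref{P1} since $e_{j_0}>0$; and $z_{i_0}$ occurs only in the factors $l=i_0$ (a polynomial, as $-e_{i_0}>0$) and $l=i_0+1$ (non-negative in $z_{i_0}$ by Property~\ref{P1}, and $i_0+1\le j_0$). So $1/x^e$ is non-negative in $z_{i_0}$ and negative in $z_{j_0}$; since $i_0<j_0$, the argument of Lemma~\ref{lem:extr} gives $\Lambda(1/x^e)=0$.

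\emph{Case $\sigma(j)=j$ for all $j>k$.} If $k=0$ then $1/x^e=1$ and $\Lambda(1)=1$, the empty product. Assume $k\ge1$; then $\tau$ permutes $\{1,\dots,k\}$, $e_l=0$ for $l>k$, $1/x^e=\prod_{l=1}^k(z_l-z_{l-1})^{-e_l}$, and $e_k\ge1$ (either $e_k=b_k>0$, or $\tau(k)<k$ and (ii)). If $e_i\le0$ for some $i$ (necessarily $i<k$), then, exactly as in the previous case with $b=k$, $1/x^e$ is non-negative in $z_i$ and negative in $z_k$, so $\Lambda(1/x^e)=0$; and the factor $\binom{S_i-1}{e_i-1}$ of the claimed product vanishes (negative lower index), so both sides are $0$. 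So assume $e_i\ge1$ for all $i\le k$, whence $S_i:=e_i+\dots+e_k\ge1$. Multiplying out the factors gives
$$\frac1{x^e}=\sum_{n_2,\dots,n_k\ge0}\Big(\prod_{i=2}^k\binom{e_i+n_i-1}{n_i}\Big)\prod_{j=1}^k z_j^{-e_j-n_j+n_{j+1}},\qquad n_1=n_{k+1}=0.$$
The $z_k$-exponent $-e_k-n_k$ is $\le-1$, so a monomial survives $\Lambda$ only if all its $z_j$-exponents are strictly negative, i.e. $n_{j+1}\le e_j+n_j-1$ for $1\le j\le k$; hence
$$\Lambda\!\Big(\frac1{x^e}\Big)=\sum_{\substack{n_2,\dots,n_k\ge0\\ n_{j+1}\le e_j+n_j-1}}\ \prod_{i=2}^k\binom{e_i+n_i-1}{n_i},$$
a finite sum. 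I evaluate it by summing over $n_k$, then $n_{k-1},\dots,$ then $n_2$, using $\sum_{n=0}^N\binom{r+n}{n}=\binom{r+N+1}{N}$, $\binom{a+n}{n}\binom{a+c+n}{c}=\binom{a+c}{c}\binom{a+c+n}{n}$, and the complementation $\binom{S}{r}=\binom{S}{S-r}$ (legitimate since, as $e_i\ge1$, all the top indices that arise are $\ge0$). Each step produces a factor $\binom{S_i-1}{S_{i+1}}$, leaving $\Lambda(1/x^e)=\prod_{i=1}^{k-1}\binom{S_i-1}{S_{i+1}}=\prod_{i=1}^k\binom{S_i-1}{e_i-1}$, the last equality because $\binom{S_i-1}{S_{i+1}}=\binom{S_i-1}{e_i-1}$ and $\binom{S_k-1}{e_k-1}=1$.

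The delicate point is realising that one must first isolate the sub-case where some $e_i\le0$ (in which both sides vanish for a soft reason): otherwise the complementations in the hockey-stick evaluation are not legitimate, and indeed the product $\prod\binom{S_i-1}{S_{i+1}}$ can fail to equal $\Lambda(1/x^e)$. Once all $e_i\ge1$, every intermediate binomial is an ordinary non-negative one and the telescoping is routine. The first case, by contrast, is a direct application of the mechanism already established for Lemma~\ref{lem:extr}, once the pair $(i_0,j_0)$ has been located.
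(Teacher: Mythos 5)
Your proof is correct and follows essentially the same route as the paper: the same expansion of $\prod_l(z_l-z_{l-1})^{-e_l}$ as an iterated Laurent series, the same ``non-negative in $z_a$, negative in $z_b$ with $a<b$'' mechanism to kill the cases where $\si$ moves a point above $k$ or some $e_i\le 0$, and the same hockey-stick telescoping for the surviving case. The only cosmetic differences are that you sum the indices from $n_k$ down rather than peeling off $z_1$ first via a separate negative-part lemma, and you choose a slightly different witness pair $(i_0,j_0)$ in the vanishing case.
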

\noindent{\bf Remark.} If  $\si(j)=j \hbox{ for all } j > k$, and $i\le k$, then 
$e_i+\cdots + e_k \ge 1$.    
Indeed, if $e_i+\cdots + e_k = b_{\tau(i)}+ \cdots + b_{\tau(k)}
+(i+\cdots +k) - (\tau(i)+\cdots + \tau(k))$ were non-positive, this
would mean that $\{\tau(i), \ldots, \tau(k)\}=\{i, \ldots, k\}$ and $b_i=\cdots =
b_k=0$, which contradicts the definition of $k$. 
However, $e_i$ may be non-positive, and in this case the above
expression vanishes. However, $e_i+k-i\ge 0$. When we apply this
lemma to prove Proposition~\ref{prop:eval-sum}, we will write the above product of
binomial coefficients in the following equivalent form:
\beq\label{prod-alt}
\frac{(e_1+\cdots +e_k)!}{\prod_{1\le i \le k}(e_i+k-i)!}
\prod_{i=1}^{k-1} \frac{e_i(e_i+1)\cdots (e_i+k-i)}{e_i+ \cdots + e_k}.
\eeq

\begin{proof}
  For an iterated Laurent series in $z_1, \ldots, z_k$, of the form 
$R(z)= \sum_{n\in \zs^k}  c(n_1, \ldots, n_k)  z_1^{n_1}\cdots z_k^{n_k}$, we
  define the \emm negative part, of $R(z)$ by
$$
[z^<] R(z)= [z_1^<\cdots z_k^<]R(z):=
\sum _{n_1<0, \ldots, n_k<0}  c(n_1, \ldots, n_k).
$$
Let us first prove that  if $f=(f_1, \ldots, f_k) \in \zs^k$,
\beq\label{neg-part}
[z^<] \left( \frac 1 {x^f}\right)=
\prod_{i=1}^k {{f_i+ \cdots + f_k-1}\choose 
{f_i -1}}.
\eeq
We adopt the standard convention that ${a\choose b}=0$ unless $0\le b\le a$. 
Given that $x_i=z_i-z_{i-1}$, we have
$$
\frac 1 {x^f} = \frac 1 {z_1^{f_1} \cdots z_k^{f_k} \left(
  1-\frac{z_1}{z_2}\right)^{f_2} \cdots \left(
  1-\frac{z_{k-1}}{z_k}\right)^{f_k}}.
$$
If  $f_i\le 0$ for some $i$, the $z$-expansion of $1/x^f$ only
involves non-negative 
powers of $z_i$, so that the negative part of $1/x^f$ is zero. The
right-hand side of~\eqref{neg-part} is zero as well, and
thus~\eqref{neg-part} holds.
Assume now $f_i>0$
for all $i$. The proof works by induction on $k$. If $k=1$ and $f_1>0$,
$$
[z^<] \left(\frac 1 {z_1^{f_1}}\right) = 1 = 
 {{f_1-1 }\choose {f_1-1 }}.
$$
For $k\ge 2$,
\begin{eqnarray*}
  [z^<] \left(\frac 1 {x^f}\right) &= &
[z^<] \frac 1 {
z_1^{f_1} z_2^{f_2}\left(  1-\frac{z_1}{z_2}\right)^{f_2} 
(z_3-z_2)^{f_3} \cdots (z_k-z_{k-1})^{f_k}}
\\
&=&
[z^<] \sum_{n\ge 0} {{n+f_2-1} \choose  {f_2-1}}
\frac{z_1^{n-f_1}}{z_2^{n+f_2}(z_3-z_2)^{f_3} \cdots (z_k-z_{k-1})^{f_k}} 
\\
&=&
 \sum_{n= 0}^{f_1-1} {{n+f_2-1} \choose  {f_2-1}}
[z_2^<\cdots z_k^{<}] \frac 1 {z_2^{n+f_2}(z_3-z_2)^{f_3} \cdots (z_k-z_{k-1})^{f_k}} 
\\
&=&
 \sum_{n= 0}^{f_1-1} {{n+f_2-1} \choose  {f_2-1}}
 {{n+ f_2+\cdots + f_k-1}\choose{n+f_2-1}}
\prod_{i=3}^k  {{f_i+ \cdots + f_k-1}\choose 
{f_i -1}}
\end{eqnarray*}
by the induction hypothesis. Now
\begin{multline*}
 \sum_{n= 0}^{f_1-1} {{n+f_2-1} \choose  {f_2-1}}
 {{n+ f_2+\cdots + f_k-1}\choose{n+f_2-1}}=\\
 \sum_{n= 0}^{f_1-1} {{n+ f_2+\cdots + f_k-1}\choose{n}}
{{ f_2+\cdots + f_k-1}\choose{f_2-1}}=
\\
 {{f_1+ f_2+\cdots + f_k-1}\choose{f_1-1}}
{{ f_2+\cdots + f_k-1}\choose{f_2-1}}.
\end{multline*}
The last equality results from the classical binomial identity
$$
\sum_{n=0}^a {{n+b}\choose n}= {{a+b+1}\choose a}.
$$
This gives~\eqref{neg-part}.

\medskip
Let us  now prove~\eqref{Lambda-1xe}.  Assume that $\si(j) = j$ for all $j>k$. This implies
that $e_{k+1}= \cdots = e_m=0$. As argued just
after the statement of the lemma, $e_k>0$. But then $1/x^e$ is
negative in $z_k$, and involves none of the variables $z_{k+1},
\ldots, z_m$. Thus, by definition of $\Lambda$,
$$
\Lambda \left( \frac 1 {x^e}\right) = [z_1^< \cdots z_k^<] \frac
1{x_1^{e_1} \cdots x_k^{e_k}} = \prod_{i=1}^k {{e_i+ \cdots + e_k-1}\choose 
{e_i -1}}
$$
(by~\eqref{neg-part}), and this gives the first part of~\eqref{Lambda-1xe}.

Assume now that  there exists $j>k$ such that
$\si(j)\not = j$. Then there also exists $j>k$ such that
$\si(j)<j$. Let us choose such a $j$.  Then there also exists
$\ell>\si(j)$ such that $\tau(\ell)<\ell$. We have
$$
\begin{array}{lllccccccccc}
e_{\si(j)}&=& b_j-j+\si(j)= -j +\si(j) <0,\\
e_\ell&=& b_{\tau(\ell)} -\tau(\ell) +\ell >0,
\end{array}
$$
with $\ell >\si(j)$. Let $\ell'=\max\{p >\si(j): e_p>0\}$ (this set is
non-empty as it contains $\ell$). Then
$e_{\ell'+1}=\cdots = e_m=0$, and  $1/x^e$ is non-negative in $z_{\si(j)}$
  but negative in $z_{\ell'}$. By definition of $\Lambda$, this
  implies that $\Lambda(1/x^e)=0$.
\end{proof}

\noindent{\em {Proof of Proposition~{\rm\ref{prop:eval-sum}.}}}
 Let us denote by
  $\SUM(b)$ the  sum we want to evaluate.  Let 
$k=\max\{i: b_i>0\}$. By   Lemma~\ref{extr-si}, the sum  can
 be reduced to \ps\ $\si$ that fix all points larger than $k$, and
 then we use the closed form expression~\eqref{prod-alt} of $ \Lambda \!
\left( \frac {\si ( M(x))}{ M(x) \si(x^b)}\right)$. This gives:
\begin{eqnarray*}
  \SUM(b)&=&\sum_{\si \in \Sn_k} \vareps(\si) \ \Lambda \!
\left( \frac {\si ( M(x))}{ M(x) \si(x^b)}\right)
\\
&=&\sum_{\tau\in \Sn_k} \vareps(\tau) \frac{(b_1+\cdots + b_k)!}
{\prod_{i=1}^k (b_{\tau(i)} -\tau(i)+k)!}\prod_{i=1}^{k-1}
\frac{(b_{\tau(i)} -\tau(i)+i) \cdots (b_{\tau(i)} -\tau(i)+k)}{b_{\tau(i)} -\tau(i)+i+ \cdots +
  b_{\tau(k)} -\tau(k)+k}\\
&=&\frac{(b_1+\cdots + b_k)!}{\prod_{i=1}^k (b_i-i+k)!}\prod_{1\le i <j \le k} (b_i-i-b_j+j).
\end{eqnarray*}
The last equality is the case $x_i= b_i-i$, $u_i=i$ of
Lemma~\ref{lem:identite}. 
 It is easy to check that, given that $b_{k+1}=\cdots = b_m=0$,
the above expression  coincides with 
$$
\frac{(b_1+\cdots + b_m)!}{\prod_{i=1}^m (b_i-i+m)!}\prod_{1\le i <j
  \le m} (b_i-i-b_j+j),
$$
as stated in Proposition~\ref{prop:eval-sum}.
\qed

\section{Final comments}
\label{sec:final}

Clearly, our proof of Theorem~\ref{thm:p}, dealing with permutations
of $\Sn^{(m)}$, is more complicated than
our proof of Theorem~\ref{thm:i}, dealing with involutions of
$\In^{(m)}$. We still wonder if there exists another change of
variables, another coefficient extraction or another way to perform
this extraction effectively that would simplify Sections~\ref{sec:extr-p}
and~\ref{sec:det-p}.

\medskip

Our approach is robust enough to be adapted to other
enumeration problems. Consider for instance the set $\tilde \Sn^{(m)}$  of
\ps\ $\pi$ of
$\Sn^{(m)}$ in which the values $1,2, \ldots, m$ occur in this
order. That is, $\pi^{-1}(1) < \cdots < \pi^{-1}(m)$. Garsia and
Goupil found recently a simple formula  for the number of such \ps\ of
(small) length $n$:  if $n\le 2m$, this number is~\cite[Coro.~6.2]{garsia-goupil}
\beq\label{gg}
\sharp\ \tilde \Sn^{(m)}_n= \sum_{r=0}^{n-m}(-1)^r {{n-m}\choose r } \frac{n!}{(m+r)!}.
\eeq
This was then reproved by Panova~\cite{panova}. See also~\cite{teddy}.

In terms of the generating tree described in Section~\ref{sec:tree-p}, this
means that one is only counting the nodes of the subtree
rooted at the \p\ $12\cdots m$. The only  change in the
functional equation of Proposition~\ref{prop:eq-p} is thus the initial
condition: instead of $1$ (which accounts for the empty \p), it is now
$v_1\cdots v_m t^m$. 
Sections~\ref{sec:inv-p} to~\ref{sec:extr-p} translate verbatim, and we reach the
following counterpart of Proposition~\ref{prop:p-extr}.
\begin{Proposition}
  The ordinary \gf\ of \ps\ that avoid \incm\ and in which the values
  $1, \ldots , m$ occur in this order is
obtained by applying the operator $\Lambda$ of Definition~{\rm\ref{def:Lambda}} to  a
rational function: 
 $$
\sum_{\tau \in \tilde \Sn^{(m)} } t^{|\tau|}
=
\Lambda\left(
\frac{t^m} 
{x_2^1\cdots x_m^{m-1}\,(1-t(\bx_1+\cdots +   \bx_m))}
\sum_{\si \in \Sn_m} \eps(\si)\, \si\! \left( 
\frac{x_2^1\cdots x_m^{m-1}}{\prod_{i=1}^m (x_1+\cdots +x_i)}
\right)\right),
$$
with $x_j=z_j-z_{j-1}$ and $z_0=0$.

Equivalently, the \emm exponential, \gf\ of these \ps\ is
$$
\sum_{\tau \in \tilde \Sn^{(m)} } \frac{t^{|\tau|}}{|\tau|!}
=
\Lambda\left(
\frac{t^me^{t(\bx_1+\cdots +   \bx_m)}} 
{x_2^1\cdots x_m^{m-1}}
\sum_{\si \in \Sn_m} \eps(\si)\, \si\! \left( 
\frac{x_2^1\cdots x_m^{m-1}}{\prod_{i=1}^m (x_1+\cdots +x_i)}
\right)\right).
$$
\end{Proposition}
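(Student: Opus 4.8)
The plan is to observe that $\tilde\Sn^{(m)}$ is exactly the set of nodes of the subtree of the generating tree of $\Sn^{(m)}$ (Section~\ref{sec:tree-p}) rooted at the \p\ $12\cdots m$: deleting the largest value from a \p\ of $\tilde\Sn^{(m)}$ keeps it in $\tilde\Sn^{(m)}$, all the way down to $12\cdots m$. So if I set
$$\widetilde{\Fc}(v;t)=\sum_{\tau\in\tilde\Sn^{(m)}}v_1^{a_2(\tau)-1}v_2^{a_3(\tau)-a_2(\tau)}\cdots v_m^{|\tau|+1-a_m(\tau)}\,t^{|\tau|},$$
with $(a_2,\ldots,a_m)=L(\tau)$ as in~\eqref{weight-p}, then Proposition~\ref{prop:rec-p} yields for $\widetilde{\Fc}$ precisely the functional equation of Proposition~\ref{prop:eq-p}, \emph{except} that the constant term $1$ (the weight of the empty \p, the original root) is replaced by the weight of the new root $12\cdots m$. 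Since $a_j(12\cdots m)=j$ for $1\le j\le m$ and $|12\cdots m|=m$, that weight is $v_1v_2\cdots v_m\,t^m$. I would also record at this point the single structural fact needed later: for \emph{every} $\tau\in\Sn^{(m)}$, $a_j(\tau)=a_{j+1}(\tau)$ forces $a_j(\tau)=\cdots=a_m(\tau)=|\tau|+1$, so the $v$-monomials occurring in $\widetilde{\Fc}(v;t)$ still satisfy property~\eqref{Lambda-prop} (their exponents lie in $\cE$); this is a statement about individual permutations, hence inherited by the subfamily $\tilde\Sn^{(m)}$.

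Next I would rerun Sections~\ref{sec:inv-p} and~\ref{sec:orbit-p}. Setting $v_j=1/(x_1+\cdots+x_j)$ leaves the kernel $K(x;t)=1-t(\bx_1+\cdots+\bx_m)$ and the $\Sn_m$-action untouched, and turns the new source term $v_1\cdots v_m\,t^m$ into $t^m/\prod_{i=1}^m(x_1+\cdots+x_i)$; write $\widetilde{\Fd}(x;t):=\widetilde{\Fc}\bigl(1/x_1,1/(x_1+x_2),\ldots,1/(x_1+\cdots+x_m);t\bigr)$ for the transformed series (cf.~\eqref{fd-def}). Multiplying its equation by $M(x)=x_2^1\cdots x_m^{m-1}$ and forming the signed sum over $\Sn_m$, the terms coming from the sum over $j$ are unchanged by the edit and are still individually fixed by the transposition $\si_j$, so they cancel exactly as in Section~\ref{sec:orbit-p}; the new source term, multiplied by $M(x)$ and summed over $\Sn_m$ with signs (with $t^m$ a scalar for the action), produces the right-hand side of the orbit sum
$$\sum_{\si\in\Sn_m}\vareps(\si)\,\si\!\left(M(x)\,\widetilde{\Fd}(x;t)\right)=\frac{t^m}{K(x;t)}\sum_{\si\in\Sn_m}\vareps(\si)\,\si\!\left(\frac{M(x)}{\prod_{i=1}^m(x_1+\cdots+x_i)}\right).$$

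Finally I would divide by $M(x)$ and apply the operator $\Lambda$ of Definition~\ref{def:Lambda}, exactly as in Section~\ref{sec:extr-p}. The proof of Lemma~\ref{lem:extr}, and the surrounding argument there, use only the shape of the fractions $\si(M(x)\widetilde{\Fd}(x;t))/M(x)$ together with the support property~\eqref{Lambda-prop}, both of which hold here; hence $\Lambda\bigl(\si(M(x)\widetilde{\Fd}(x;t))/M(x)\bigr)$ equals $\widetilde{\Fc}(1,\ldots,1;t)$ for $\si=\id$ and vanishes otherwise. Applying $\Lambda$ to the orbit sum above then gives the claimed rational-function expression for the ordinary \gf\ of $\tilde\Sn^{(m)}$, and the exponential version follows by the substitution of $e^{at}$ for $1/(1-at)$, $a=\bx_1+\cdots+\bx_m$, used throughout the paper. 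I do not expect a genuine obstacle: restricting to $\tilde\Sn^{(m)}$ only edits the ``initial condition'' of the functional equation, and both the orbit-sum cancellation and the $\Lambda$-extraction are insensitive to this edit; the only mild point to check is that property~\eqref{Lambda-prop} survives the restriction, which it does for free. One could, if desired, carry the coefficient extraction further (as in Section~\ref{sec:det-p}) to recover the Garsia--Goupil formula~\eqref{gg} for $n\le 2m$, but that is not part of the present statement.
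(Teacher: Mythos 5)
Your proposal is correct and follows essentially the same route as the paper, which likewise observes that restricting to the subtree rooted at $12\cdots m$ only changes the initial condition of the functional equation of Proposition~\ref{prop:eq-p} from $1$ to $v_1\cdots v_m t^m$, and then lets Sections~\ref{sec:inv-p}--\ref{sec:extr-p} apply verbatim. Your additional check that property~\eqref{Lambda-prop} is inherited by the subfamily (being a per-permutation statement) is exactly the point that makes the $\Lambda$-extraction go through unchanged.
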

We have not  further pursued in this direction, but one could try to
obtain a more explicit formula giving the whole \gf\ (which would
imply~\eqref{gg} when $n\le 2m$).

\medskip

 As discussed at the beginning of Section~\ref{sec:tree-inv}, the
generating tree for \incm-avoiding involutions can be described using
$m$ catalytic variables. Since these \invs \ are
equinumerous with \decm-avoiding involutions, it is natural to ask
whether one could derive
Theorem~\ref{thm:i} from this tree and the corresponding functional
equation. This could allow us to address the enumeration of
\incm-avoiding \emm fixed point free, \invs, for which
determinantal formulae exist (obtained by applying Gessel's $\theta$
operator~\cite{gessel-symmetric} to identities (5.41) and (5.42)
of~\cite{baik-rains}, which are equivalent to Theorem 2.3(3) in~\cite{okada}; see also Stanley's
survey~\cite[Thm.~8]{stanley-icm}). The  recursive construction we
have used  for involutions does not allow us to address this problem.

\bigskip
\bigskip
\noindent{\bf Acknowledgements.} The author thanks Aaron Jaggard for
communicating an early version of his paper with
Joseph Marincel~\cite{jaggard-marincel}.


\bibliographystyle{plain}
\bibliography{perm.bib}

\end{document}